\providecommand{\U}[1]{\protect\rule{.1in}{.1in}}
\newtheorem{theorem}{Theorem}
\theoremstyle{plain}
\newtheorem{corollary}{Corollary}
\newtheorem{definition}{Definition}
\newtheorem{lemma}{Lemma}
\newtheorem{proposition}{Proposition}
\newtheorem{remark}{Remark}
\numberwithin{equation}{section}
\begin{document}
\title[Projective IFS]{Real projective iterated function systems}
\author{Michael F. Barnsley}
\address{Department of Mathematics\\
Australian National University\\
Canberra, ACT, Australia}
\author{Andrew Vince}
\address{Department of Mathematics\\
University of Florida\\
Gainesville, FL 32611-8105, USA}
\email{avince@ufl.edu}
\urladdr{http://www.superfractals.com{}}

\begin{abstract}
This paper contains four main results associated with an attractor of a
projective iterated function system (IFS). The first theorem characterizes
when a projective IFS has an attractor which avoids a hyperplane. The second
theorem establishes that a projective IFS has at most one attractor. In the
third theorem the classical duality between points and hyperplanes in
projective space leads to connections between attractors that avoid
hyperplanes and repellers that avoid points, as well as hyperplane attractors
that avoid points and repellers that avoid hyperplanes. Finally, an index is
defined for attractors which avoid a hyperplane. This index is shown to be a
nontrivial projective invariant.

\end{abstract}
\maketitle

\section{Introduction}

This paper provides the foundations of a surprisingly rich mathematical theory
associated with the attractor of a real projective iterated function system
(IFS). (A real projective IFS consists of a finite set of projective
transformations $\{f_{m}:\mathcal{P\rightarrow P}\}_{m=1}^{M}$ where
$\mathcal{P}$ is a real projective space. An attractor is a nonempty compact
set $A\subset\mathcal{P}$ such that $\lim_{k\rightarrow\infty}\mathcal{F}%
^{k}(B)=\mathcal{F}\left(  A\right)  =A$ for all nonempty sets $B$ in an open
neighborhood of $A$, where $\mathcal{F}(B)=\cup_{m=1}^{M}f_{m}(B)$.) In
addition to proving conditions which guarantee the existence and uniqueness of
an attractor for a projective IFS, we also present several related concepts.
The first connects an attractor which avoids a hyperplane with a hyperplane
repeller. The second uses information about the hyperplane repeller to define
a new index for an attractor. This index is both invariant under projective
transformations and nontrivial, which implies that it joins the cross ratio
and Hausdorff dimension as nontrivial invariants under the projective group.
Thus, these attractors belong in a natural way to the collection of
geometrical objects of classical projective geometry.

The definitions that support expressions such as "iterated function system",
"attractor", "basin of attraction" and "avoids a hyperplane", used in this
Introduction, are given in Section \ref{hilbert}.

Iterated function systems are a standard framework for describing and
analyzing self-referential sets such as deterministic fractals
\cite{barnsleydemko, FE1, hutchinson} and some types of random fractals
\cite{barnsleyhutch}. Attractors of affine IFSs have many applications,
including image compression \cite{barnsleyhurd, barnsleynotices, fisher} and
geometric modeling \cite{blanc}. They relate to the theory of the joint
spectral radius \cite{berger} and to wavelets \cite{Berger2}. Projective IFSs
have more degrees of freedom than comparable affine IFSs \cite{superfractals}
while the constituent functions share geometrical properties such as
preservation of straight lines and cross ratios. Projective IFSs have been
used in digital imaging and computer graphics, see for example \cite{notices2}%
, and they may have applications to image compression, as proposed in \cite[p.
10]{IMA}. Projective IFSs can be designed so that their attractors are smooth
objects such as arcs of circles and parabolas, and rough objects such as
fractal interpolation functions.

The behavior of attractors of projective IFSs appears to be complicated. In
computer experiments conducted by the authors, attractors seem to come and go
in a mysterious manner as parameters of the IFS are changed continuously. See
Example 4 in Section \ref{examplesec} for an example that illustrates such
phenomena. The intuition developed for affine IFSs regarding the control of
attractors seems to be wrong in the projective setting. Our theorems provide
insight into such behavior.

One key issue is the relationship between the existence of an attractor and
the contractive properties of the functions of the IFS. In a previous paper
\cite{ABVW} we investigated the relationship between the existence of
attractors and the existence of contractive metrics for IFSs consisting of
affine maps on ${\mathbb{R}}^{n}$. We established that an affine IFS
$\mathcal{F}$ has an attractor if and only if $\mathcal{F}$ is contractive on
all of ${\mathbb{R}}^{n}$. In the present paper we focus on the setting where
$\mathbb{X}=\mathbb{P}^{n}$ is real $n$-dimensional projective space and each
function in $\mathcal{F}$ is a projective transformations. In this case
$\mathcal{F}$ is called a \textit{projective IFS}.

Our first main result, Theorem \ref{main}, provides a set of equivalent
characterizations of a projective IFS that possesses an attractor that avoids
a hyperplane. The adjoint $\mathcal{F}^{t}$ of a projective IFS $\mathcal{F}$
is defined in Section \ref{dual}, and convex body is defined in Definition
\ref{convbdy}. An IFS $\mathcal{F}$ is contractive on $S\subset X$ when
$\mathcal{F}(S)\subset S$ and there is a metric on $S$ with respect to which
all the functions of the IFS are contractive, see Definition
\ref{contract-def}. For a set $X$ in a topological space, $\overline{X}$
denotes its closure, and $int(X)$ denotes its interior.

\begin{theorem}
\label{main} If $\mathcal{F}$ is a projective IFS on $\mathbb{P}^{n}$, then
the following statements are equivalent.

\begin{enumerate}
\item $\mathcal{F}$ has an attractor $A$ that avoids a hyperplane.

\item There is a nonempty open set $U$ that avoids a hyperplane such that
$\mathcal{F}(\overline{U})\subset U$.

\item There is a nonempty finite collection of disjoint convex bodies
$\left\{  C_{i}\right\}  $ such that $\mathcal{F}(\cup_{i}C_{i})\subset
int(\cup_{i}C_{i})$.

\item There is a nonempty open set $U\subset\mathbb{P}^{n}$ such that
$\mathcal{F}$ is contractive on $\overline{U}$.

\item The adjoint projective IFS $\mathcal{F}^{t}$ has an attractor $A^{t}$
that avoids a hyperplane.
\end{enumerate}

When these statements are true we say that $\mathcal{F}$ is contractive.
\end{theorem}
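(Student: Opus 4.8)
The plan is to establish the cyclic chain of implications $(1)\Rightarrow(2)\Rightarrow(3)\Rightarrow(4)\Rightarrow(1)$, and to deduce the equivalence with $(5)$ from the projective duality of Section~\ref{dual}. The conceptual content is the passage from a dynamical hypothesis (the mere existence of an attractor) to a quantitative one (the existence of a contractive metric); the bridge between the two is Hilbert's projective metric attached to the trapping region, and I expect the step $(3)\Rightarrow(4)$ to be the main obstacle.

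For $(1)\Rightarrow(2)$ I would first invoke the standard fact that an attractor $A$ possesses a trapping neighbourhood: a compact set $N$ with $A\subset int(N)$ and $\mathcal{F}(N)\subset int(N)$ (this follows from $\mathcal{F}^{k}(\overline{W})\to A$ together with compactness and continuity). Since the $f_{m}$ are homeomorphisms of $\mathbb{P}^{n}$, each iterate $\mathcal{F}^{k}(N)$ is again a trapping neighbourhood of $A$, and $\mathcal{F}^{k}(N)\to A$; as $A$ avoids a hyperplane, so does $\mathcal{F}^{k}(N)$ for $k$ large, and then $U:=int(\mathcal{F}^{k}(N))$ is the required open set. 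For $(2)\Rightarrow(3)$ I would pass to the affine chart $\mathbb{R}^{n}=\mathbb{P}^{n}\setminus H$ in which $U$ lives: there $\overline{U}$ is compact and $\mathcal{F}(\overline{U})$ is a compact subset of the open set $U$, so there is room to cover $\overline{U}$ by finitely many small convex sets (balls or cubes), shrink them slightly so that they are pairwise disjoint, and still keep $\mathcal{F}(\cup_{i}C_{i})\subset int(\cup_{i}C_{i})$. Because all the $C_{i}$ lie in one chart, their union avoids $H$.

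The step $(3)\Rightarrow(4)$ is the heart of the matter. A convex body avoiding a hyperplane carries Hilbert's projective metric on its interior, and a projective transformation carrying one convex body into the interior of another is a strict contraction between the two Hilbert metrics — this is Birkhoff's contraction theorem. Since the $C_{i}$ are disjoint and $\mathcal{F}$ maps $\cup_{i}C_{i}$ into its interior, each $f_{m}(C_{i})$ lies in the interior of a single $C_{j}$, so $f_{m}$ contracts $d_{C_{i}}$ into $d_{C_{j}}$ by a factor bounded away from $1$, uniformly in $m$ and $i$. The remaining work is to amalgamate these finitely many per-component Hilbert metrics into a single metric $d$ on $\overline{U}=\cup_{i}C_{i}$ for which every $f_{m}$ is a contraction: one rescales the individual metrics and inserts a uniform separation between distinct components, and one must handle the blow-up of $d_{C_{i}}$ near $\partial C_{i}$ — which is dealt with by noting that a single application of $\mathcal{F}$ already moves every point a definite Hilbert-distance into the interiors, so only the well-behaved part of the metric is ever used. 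The point to stress is that no metric on all of $\mathbb{P}^{n}$ can make projective maps contractive, so Hilbert's metric of the trap is exactly the right — and essentially the only — tool.

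Finally, $(4)\Rightarrow(1)$ is Hutchinson's theorem: $(\overline{U},d)$ is compact, hence complete, $\mathcal{F}$ is a contraction on its hyperspace of nonempty compact subsets with the Hausdorff metric, so there is a unique invariant compact $A\subset\overline{U}$ with $\mathcal{F}^{k}(B)\to A$ for all nonempty $B$ near $A$; that $A$ avoids a hyperplane follows from a direct argument showing that a contractive projective IFS cannot have an attractor meeting every hyperplane. For $(1)\Leftrightarrow(5)$ I would use the duality of Section~\ref{dual}: $\mathcal{F}$ and its adjoint $\mathcal{F}^{t}$ act on $\mathbb{P}^{n}$ and its dual, the polar dual of a convex body is again a convex body, inclusions reverse under polarity, and the relation $f_{m}(C_{i})\subset int(C_{j})$ dualises to $f_{m}^{t}(D_{j})\subset int(D_{i})$ for the dual bodies $D_{i}$; tracking how the combinatorics of the disjoint family transforms, condition $(3)$ for $\mathcal{F}$ yields condition $(3)$ for $\mathcal{F}^{t}$, and the chain already proved closes the loop.
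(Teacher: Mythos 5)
Your overall architecture coincides with the paper's ($(1)\Rightarrow(2)\Rightarrow(3)\Rightarrow(4)\Rightarrow(1)$ plus duality for $(5)$), but two of the steps, as you describe them, would fail. In $(2)\Rightarrow(3)$, covering $\overline{U}$ by small balls and then ``shrinking them slightly so that they are pairwise disjoint'' does not work: once the balls are made disjoint, a point of $\cup_i C_i$ is only known to map into $U$, and its image can land in the gaps between the shrunken balls, so $\mathcal{F}(\cup_i C_i)\subset int(\cup_i C_i)$ is lost; moreover no finite family of disjoint convex bodies can cover a connected non-convex trapping region. Producing \emph{disjoint} convex bodies is precisely the subtle point (this is the convex hull caveat of Example 2 in Section \ref{examplesec}): the paper instead takes the convex hulls, with respect to the avoided hyperplane, of the finitely many connected components of $U$ meeting $\tilde{A}=\cap_k\mathcal{F}^k(\overline{U})$, and repeatedly merges overlapping hulls; that trapping survives taking hulls rests on Proposition \ref{lemmaD} ($f(conv_H(S))=conv_H(f(S))$ for connected $S$ with $S$ and $f(S)$ avoiding $H$), an ingredient absent from your sketch. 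Similarly, in $(3)\Rightarrow(4)$ the per-body Birkhoff contraction is the right start, but ``rescale and insert a uniform separation between distinct components'' does not yield a contraction: if $x\in C_i$, $y\in C_j$, $i\neq j$, and $f(i)\neq f(j)$, a uniform separation gives $d(f(x),f(y))=d(x,y)$, ratio $1$, and this can recur forever because the induced map on components may have cycles. The paper's resolution is the combinatorial heart of the proof: Lemma \ref{lem7} (via the Krein--Rutman theorem) rules out two directed cycles of the same type at different nodes, making the pair-graph acyclic, and a linear extension then produces the graded metric $d_G$ of Lemma \ref{metric} whose inter-component distances strictly decrease; the blow-up of the Hilbert metric at $\partial C_i$ is handled by using the Hilbert metric of slightly enlarged bodies $\widehat{C}_i$, not by the vaguer claim that only the well-behaved part of the metric is ever used.

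Two further points. For $(1)\Leftrightarrow(5)$, dualizing the bodies one at a time is also flawed: the polar duals $D_i$ of disjoint bodies need not be disjoint, and for an index $j$ not of the form $f(i)$ the relation $f(C_i)\subset int(C_{f(i)})$ gives no containment for $f^t(D_j)$ whatsoever, so statement $(3)$ for $\mathcal{F}^t$ does not follow by this route; the paper instead dualizes the single trapping set of statement $(2)$ via the complementary dual $V=\overline{U}^{\ast}$ (Propositions \ref{symbolic} and \ref{inclusion}) and must separately prove that $V$ is nonempty and that $\overline{V}$ avoids a hyperplane (namely $a^{\bot}$ for $a\in A$). Finally, your $(1)\Rightarrow(2)$ treats the existence of a trapping neighborhood as standard, but that is exactly the nontrivial content of Lemma \ref{(1)implies(2)} (the backwards-recursive construction of the sets $V_k$ and $O_k$); granting it, your device of pushing the trap forward under $\mathcal{F}^k$ until it lies near $A$ and hence avoids the hyperplane is sound, and in $(4)\Rightarrow(1)$ you assert but do not prove that the attractor avoids a hyperplane, which the paper obtains from the invariant hyperplane $H_f$ of a single contractive map $f$ and a contradiction with contractivity on $A$.
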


Statement (4) is of particular importance because if an IFS is contractive,
then it possesses an attractor that depends continuously on the functions of
the IFS, see for example \cite[Section 3.11]{FE1}. Moreover, if an IFS is
contractive, then various canonical measures, supported on its attractor, can
be computed by means of the "chaos game" algorithm \cite{barnsleydemko}, and
diverse applications, such as those mentioned above, become feasible. Note
that statement (4) of Theorem~\ref{main} immediately implies uniqueness of an
attractor in the set $U$, but not uniqueness in $\mathbb{P}^{n}$. See also
Remark \ref{remark1} in Section \ref{remarksec}.

Our second main result establishes uniqueness of attractors, independently of
whether or not Theorem 1 applies.

\begin{theorem}
\label{uniquethm} A projective IFS has at most one attractor.
\end{theorem}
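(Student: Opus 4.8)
The plan is to argue by contradiction: assume $\mathcal{F}$ has two distinct attractors, extract a pair of disjoint compact ``trapping regions'' for them, and then show that a projective IFS cannot possess such a pair.

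Suppose $\mathcal{F}$ has distinct attractors $A_{1}$ and $A_{2}$, and for $i=1,2$ let $\mathcal{B}_{i}$ be an open neighbourhood of $A_{i}$ witnessing the attractor property, so that $\mathcal{F}^{k}(B)\rightarrow A_{i}$ in the Hausdorff metric for every nonempty compact $B\subseteq\mathcal{B}_{i}$. First I would note that $\mathcal{B}_{1}\cap\mathcal{B}_{2}=\emptyset$: a point $x$ in the intersection would yield $\mathcal{F}^{k}(\{x\})\rightarrow A_{1}$ and $\mathcal{F}^{k}(\{x\})\rightarrow A_{2}$, but limits in the compact hyperspace of nonempty compact subsets of $\mathbb{P}^{n}$ are unique, so $A_{1}=A_{2}$. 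Since $A_{i}\subseteq\mathcal{B}_{i}$, it follows that $A_{1}$ and $A_{2}$ are disjoint compact sets.

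Next I would produce disjoint trapping regions. Choose $\varepsilon_{i}>0$ small enough that $B_{i}:=\overline{N_{\varepsilon_{i}}(A_{i})}\subseteq\mathcal{B}_{i}$ and $B_{1}\cap B_{2}=\emptyset$. From $\mathcal{F}^{k}(B_{i})\rightarrow A_{i}$ there is an $\ell$ with $\mathcal{F}^{\ell}(B_{i})\subseteq\operatorname{int}(B_{i})$ for $i=1$ and $i=2$ simultaneously. An attractor of $\mathcal{F}$ is an attractor of the projective IFS $\mathcal{F}^{\ell}$ (the same neighbourhood witnesses it), so it suffices to prove the theorem for $\mathcal{F}^{\ell}$; renaming $\mathcal{F}^{\ell}$ as $\mathcal{F}$, I may assume $\mathcal{F}(B_{i})\subseteq\operatorname{int}(B_{i})$. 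Then $\mathcal{F}^{k}(B_{i})$ decreases to $\bigcap_{k}\mathcal{F}^{k}(B_{i})=A_{i}$, and since $\mathbb{P}^{n}$ is connected while $B_{1},B_{2}$ are disjoint with nonempty interior, $B_{1}\cup B_{2}\neq\mathbb{P}^{n}$. Thus the theorem reduces to the statement: \emph{no projective IFS admits two disjoint compact sets $B_{1},B_{2}$, each with nonempty interior, such that $\mathcal{F}(B_{i})\subseteq\operatorname{int}(B_{i})$ for $i=1,2$.}

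This reduced statement is where the real work lies, and I expect it to be the main obstacle. A direct fixed-point count fails: every word map $f_{w}$ satisfies $f_{w}(B_{i})\subseteq\operatorname{int}(B_{i})$, yet a projective transformation can map a solid region into its own interior without fixing a point of it (a ``screw'' motion toward a proper projective subspace), so Brouwer/Lefschetz arguments do not bite. The approach I would take is to lift to $\mathbb{R}^{n+1}$: write $f_{m}=[L_{m}]$ with $L_{m}\in GL(n+1,\mathbb{R})$, let $L_{w}$ denote the products over words $w$, and analyse the normalized operators $L_{w}/\|L_{w}\|$. Because each $B_{i}$ has nonempty interior it contains points lying outside any prescribed proper projective subspace; pushing such points through longer and longer words and using $f_{w}(B_{i})\subseteq B_{i}$, one shows that $B_{i}$ must meet the subspace(s) on which the normalized products of the semigroup $\langle L_{1},\dots,L_{M}\rangle$ concentrate (the semigroup analogue of the dominant eigenspace of a single matrix). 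Peeling off such a subspace and iterating — within it the relevant normalized maps eventually act as isometries of the round metric, and an isometry carrying a nonempty compact set into itself carries it onto itself — one is forced to identify a single canonical minimal set that must lie in both $B_{1}$ and $B_{2}$, contradicting $B_{1}\cap B_{2}=\emptyset$.

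The delicate part of this last step is organizing the ``peeling'' for a semigroup rather than a single linear map, and making precise the sense in which the normalized products concentrate on a common subspace. A complementary route worth keeping in reserve is to study the nonempty compact set $D:=\mathbb{P}^{n}\setminus(\operatorname{int}(B_{1})\cup\operatorname{int}(B_{2}))$, which satisfies $f_{m}^{-1}(D)\subseteq D$ for every $m$ and so is a candidate ``dual repeller'' for $\mathcal{F}$; one would then argue that such a $D$ sandwiched between two disjoint trapping regions is incompatible with the projective structure.
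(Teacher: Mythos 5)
Your preliminary reductions are correct and match the paper's: distinct attractors must be disjoint with disjoint basins, and each admits a compact trapping region mapped into its own interior (the paper gets this from Lemma \ref{(1)implies(2)}(i) rather than by passing to $\mathcal{F}^{\ell}$, but your route works, since an attractor of $\mathcal{F}$ is an attractor of $\mathcal{F}^{\ell}$). You have also correctly diagnosed that Brouwer-type fixed-point arguments fail and that the reduced statement --- no projective IFS has two disjoint trapping regions with nonempty interior --- is where all the difficulty sits. The problem is that this is exactly the part you have not proved: phrases such as ``one shows that $B_i$ must meet the subspace(s) on which the normalized products of the semigroup concentrate'' and ``one is forced to identify a single canonical minimal set'' assert the desired conclusion without an argument, and the semigroup version you propose is genuinely problematic --- a semigroup $\langle L_1,\dots,L_M\rangle$ need not have any common subspace on which all normalized products concentrate (the generators can have transverse dominant eigenspaces), so the ``peeling'' you defer to is not merely delicate but may have nothing to peel.

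The key idea you are missing is that no semigroup analysis is needed: a \emph{single} map $f\in\mathcal{F}$ already forces the contradiction. Take $L_f$ in real Jordan canonical form, scaled so the maximal eigenvalue modulus is $1$, and let $\widehat{E}\subset\mathbb{P}^n$ be the projective subspace spanned by the leading (generalized) eigendirections --- the first rows of the largest Jordan blocks among eigenvalues of modulus one. Iterating the Jordan form shows that for \emph{every} $x$ in either basin, the limit set $A(x)=\lim_k\overline{\cup_{m\ge k}f^m(x)}$ is nonempty and lies in $\widehat{E}$; since $A(x)\subset A_1$ for $x\in U_1$ and $A(x')\subset A_2$ for $x'\in U_2$, both $U_1\cap\widehat{E}$ and $U_2\cap\widehat{E}$ are nonempty, and $f$ restricted to $\widehat{E}$ maps $\overline{U_i\cap\widehat{E}}$ into $U_i\cap\widehat{E}$. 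But on $\widehat{E}$ the induced map comes from an isometry of the corresponding sphere (the restriction of $L_f$ to that subspace has all blocks of rotation or scalar type with $|\lambda|=1$), and a measure-preserving homeomorphism of a connected sphere cannot map a compact set properly into its own interior unless that set is everything; hence $U_1\cap\widehat{E}=\widehat{E}=U_2\cap\widehat{E}$, contradicting $U_1\cap U_2=\emptyset$. Your reserve route via $D=\mathbb{P}^n\setminus(\operatorname{int}B_1\cup\operatorname{int}B_2)$ runs into the same wall: making the ``dual repeller'' argument work requires the contractivity machinery of Theorem \ref{main}, which is available only when the attractor avoids a hyperplane --- precisely the hypothesis Theorem \ref{uniquethm} must do without.
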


The classical projective duality between points and hyperplanes manifests
itself in interesting ways in the theory of projective IFSs. Theorem
\ref{adjointhm} below, which depends on statement (5) in Theorem \ref{main},
is an example. It is a geometrical description of the dynamics of
$\mathcal{F}$ as a set operator on $\mathbb{P}^{n}.$ The terminology used is
provided in Section \ref{dual}.

\begin{theorem}
\label{adjointhm}

(1) A projective IFS has an attractor that avoids a hyperplane if and only if
it has a hyperplane repeller that avoids a point. The basin of attraction of
the attractor is the complement of the union of the hyperplanes in the repeller.

(2) A projective IFS has a hyperplane attractor that avoids a point if and
only if it has a repeller that avoids a hyperplane. The basin of attraction of
the hyperplane attractor is the set of hyperplanes that do not intersect the repeller.
\end{theorem}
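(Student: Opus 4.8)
The plan is to obtain Theorem~\ref{adjointhm} from Theorem~\ref{main} --- specifically from the equivalence of its statements (1) and (5) --- by translating everything through the classical point--hyperplane duality, and then to do the extra work required for the statements about basins. First I would set up the dictionary from Section~\ref{dual}: write $\mathbb{P}^{n*}$ for the dual projective space, identify its points with the hyperplanes of $\mathbb{P}^{n}$, and record that for a projective map $f$ the adjoint $f^{t}$ acts on $\mathbb{P}^{n*}$ as the \emph{inverse} of the action $\phi$ induced by $f$ on hyperplanes; hence the adjoint IFS $\mathcal{F}^{t}=\{f_{m}^{t}\}$ is the inverse of the hyperplane IFS $\mathcal{G}=\{\phi_{m}\}$, so by definition an attractor of $\mathcal{F}^{t}$ is exactly a hyperplane repeller of $\mathcal{F}$. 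Also, a subset of $\mathbb{P}^{n*}$ avoids a hyperplane of $\mathbb{P}^{n*}$ if and only if, read as a family of hyperplanes of $\mathbb{P}^{n}$, it avoids the point of $\mathbb{P}^{n}$ dual to that hyperplane. With this dictionary part (1) is immediate: ``$\mathcal{F}$ has an attractor avoiding a hyperplane'' is statement (1) of Theorem~\ref{main}, by (1)$\Leftrightarrow$(5) this is equivalent to ``$\mathcal{F}^{t}$ has an attractor avoiding a hyperplane'', and the latter reads back in $\mathbb{P}^{n}$ as ``$\mathcal{F}$ has a hyperplane repeller avoiding a point.'' Part (2) is the same assertion for the IFS $\mathcal{G}$, using that $\mathcal{G}^{t}=\{f_{m}^{-1}\}=\mathcal{F}^{-1}$, so that an attractor of $\mathcal{G}^{t}$ is a repeller of $\mathcal{F}$ and a hyperplane attractor of $\mathcal{F}$ (an attractor of $\mathcal{G}$) avoiding a point corresponds to a repeller of $\mathcal{F}$ avoiding a hyperplane. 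Uniqueness of the repeller in each case comes from Theorem~\ref{uniquethm} applied to the relevant adjoint IFS.

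Next I would identify the basins, where the real content lies. Let $A^{t}$ be the attractor of $\mathcal{F}^{t}$, put $\mathcal{H}=\bigcup_{H\in A^{t}}H\subset\mathbb{P}^{n}$ and $\Omega=\mathbb{P}^{n}\setminus\mathcal{H}$; the claim is that $\Omega$ is the basin of $A$. Since $A^{t}$ is compact, $\mathcal{H}$ is compact and $\Omega$ is open. From the attractor invariance $A^{t}=\bigcup_{m}f_{m}^{t}(A^{t})=\bigcup_{m}\phi_{m}^{-1}(A^{t})$, together with the fact that passing to ``hyperplanes through a point'' intertwines $f_{m}$ with $\phi_{m}$, one gets $\Omega=\bigcap_{m}f_{m}^{-1}(\Omega)$, hence $\mathcal{F}(\Omega)\subseteq\Omega$. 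For the inclusion $\Omega\subseteq\mathcal{B}(A)$: take a small open set $U^{t}$ with $A^{t}\subset U^{t}$ and $\mathcal{F}^{t}(\overline{U^{t}})\subset U^{t}$ from statement (2) of Theorem~\ref{main} applied to $\mathcal{F}^{t}$, let $V=\mathbb{P}^{n}\setminus\bigcup_{H\in\overline{U^{t}}}H$, and verify the identity $\mathbb{P}^{n}\setminus\bigcup_{H\in(\mathcal{F}^{t})^{k}(\overline{U^{t}})}H=\{x:\mathcal{F}^{k}(\{x\})\subseteq V\}$; since $(\mathcal{F}^{t})^{k}(\overline{U^{t}})$ decreases to $A^{t}$, the left sides increase to $\Omega$, so any compact $K\subseteq\Omega$ lands in $V$ after finitely many steps, after which contractivity of $\mathcal{F}$ near $A$ (statement (4) of Theorem~\ref{main}, together with Theorem~\ref{uniquethm} to match the resulting attractor with $A$) drives $\mathcal{F}^{k}(K)$ to $A$. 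For the reverse inclusion $\mathcal{B}(A)\subseteq\Omega$: if $x\in\mathcal{H}$, say $x\in H\in A^{t}$, then invariance of $A^{t}$ produces indices $m_{1},m_{2},\dots$ with $f_{m_{k}}\circ\cdots\circ f_{m_{1}}(H)\in A^{t}$ for every $k$, so $\mathcal{F}^{k}(\{x\})$ always contains the point $f_{m_{k}}\circ\cdots\circ f_{m_{1}}(x)\in\mathcal{H}$; as $\mathcal{H}$ is compact and disjoint from $A$, this forbids $\mathcal{F}^{k}(\{x\})\to A$, so $x\notin\mathcal{B}(A)$. Finally the basin statement of part (2) is just the basin statement of part (1) for the IFS $\mathcal{G}$, read dually: the basin of the hyperplane attractor is $\{H:H\cap R=\varnothing\}$, where $R$ is the repeller (the attractor of $\mathcal{G}^{t}=\mathcal{F}^{-1}$).

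The step I expect to be the genuine obstacle is the disjointness $A\cap\mathcal{H}=\varnothing$, that is $A\subseteq\Omega$ --- this is what makes ``$x\in\mathcal{H}\Rightarrow x\notin\mathcal{B}(A)$'' work and also what lets one shrink $\overline{U^{t}}$ so that $A\subseteq V$ and the contractivity argument can begin. It is intuitively obvious that the attractor avoids the union of the repelling hyperplanes, but a soft argument appears circular, because the convergence $\mathcal{F}^{k}(\overline{U})\to A$ only becomes useful once one already knows $A$ stays away from $\mathcal{H}$. I would therefore extract it from the constructions behind Theorem~\ref{main}: statement (3) provides disjoint convex bodies $\{C_{i}\}$ with $\mathcal{F}(\cup_{i}C_{i})\subset int(\cup_{i}C_{i})$ and $A\subseteq\cup_{i}C_{i}$, and the proof of (1)$\Rightarrow$(5) should yield a corresponding family for $\mathcal{F}^{t}$ whose dual hyperplanes contain $\mathcal{H}$ yet miss $\cup_{i}C_{i}$; the non-incidence of these two convex-body systems then forces $A\cap\mathcal{H}=\varnothing$. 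Establishing that compatibility between the primal and adjoint systems is the technical heart of the argument.
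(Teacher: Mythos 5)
Your duality dictionary and the deduction of both ``if and only if'' assertions from Theorem \ref{main} (1)$\Leftrightarrow$(5) is exactly the paper's route (Proposition \ref{dualIFSlemma}), and your reduction of ``basin $\subseteq\Omega$'' to the disjointness $A\cap\mathcal{H}=\emptyset$ is sound. The genuine gaps are in the basin identification. In the direction $\Omega\subseteq\mathcal{B}(A)$, after landing a compact $K\subseteq\Omega$ inside $V=(\overline{U^{t}})^{\ast}$ you invoke ``contractivity of $\mathcal{F}$ near $A$ (statement (4)) together with Theorem \ref{uniquethm}'' to finish, but statement (4) only provides a contractive metric on the closure of some unspecified open set $U\supset A$, and nothing you have shown forces the iterates $\mathcal{F}^{k}(K)$, which you only know stay in $V$, ever to enter that $U$. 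What is needed is precisely the paper's Proposition \ref{open}: if $V$ is open, $\overline{V}$ avoids a hyperplane and $\mathcal{F}(\overline{V})\subset V$, then $V$ lies in the basin of the attractor. That statement is not a formal consequence of (4) plus uniqueness; its proof must show that $\widetilde{A}=\bigcap_{k}\mathcal{F}^{k}(\overline{V})$ coincides with $A$, which the paper does by re-running the construction (2.5)$\Rightarrow$(3)$\Rightarrow$(4) to build a contractive metric on a whole neighborhood of $\widetilde{A}$, not just of $A$. (A smaller instance of the same issue: you take $U^{t}$ from ``statement (2) applied to $\mathcal{F}^{t}$'' and assert $(\mathcal{F}^{t})^{k}(\overline{U^{t}})$ decreases to $A^{t}$; statement (2) alone gives neither $A^{t}\subset U^{t}$ nor that $\overline{U^{t}}$ lies in the basin of $A^{t}$ --- you need Lemma \ref{(1)implies(2)}(ii) for that.)

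Second, the disjointness $A\cap\mathcal{H}=\emptyset$, which you rightly single out as the crux of $\mathcal{B}(A)\subseteq\Omega$, is only sketched (``non-incidence of the primal and adjoint convex-body systems''), and when one tries to carry that sketch out it collapses onto the same missing ingredient: one must show $A^{t}\subseteq\overline{U}^{\ast}$ for a neighborhood $U$ of $A$ with $\mathcal{F}(\overline{U})\subset U$, i.e.\ again that a complementary dual set with $\mathcal{F}^{t}(\overline{V})\subset V$ is contained in the basin of $A^{t}$ --- Proposition \ref{open} applied to $\mathcal{F}^{t}$. The paper sidesteps this circle with a different argument for $O\subseteq Q$: each $f\in\mathcal{F}$ has an attractive fixed point and invariant hyperplane $H_{f}$ (Corollary \ref{cor}), the attractive fixed point of $f^{-1}$ acting on hyperplanes is this same $H_{f}$ (Lemma \ref{newlemma}), the repeller $\mathcal{R}$ is the closure of the $\widehat{\mathcal{F}^{-1}}$-orbit of the $H_{f}$'s, and forward invariance of the basin $O$ together with $H_{f}\cap O=\emptyset$ then shows $O$ misses every hyperplane of $\mathcal{R}$, which in particular gives $A\cap\mathcal{H}=\emptyset$. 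So your architecture can be completed, but the two steps above --- Proposition \ref{open} and the fixed-hyperplane identification of Lemma \ref{newlemma} (or some substitute for it) --- are the actual technical content of the paper's proof and are absent from yours.
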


Figure \ref{diskleaf1} illustates Theorem \ref{adjointhm}. Here and in the
other figures we use the disk model of the projective plane. Diametrically
opposite points on the boundary of the disk are identified in $\mathbb{P}^{2}%
$. In the left-hand panel of Figure \ref{diskleaf1} the "leaf" is the
attractor $A$ of a certain projective IFS $\mathcal{F}$ consisting of four
projective transformations on $\mathbb{P}^{2}$. The surrounding grainy region
approximates the set $R$ of points in the corresponding hyperplane repeller.
The complement of $R$ is the basin of attraction of $A$. The central green,
red, and yellow objects in the right panel comprise the attractor of the
adjoint IFS $\mathcal{F}^{t}$, while the grainy orange scimitar-shaped region
illustrates the corresponding hyperplane repeller.%
\begin{figure}[ptb]%
\centering
\includegraphics[
natheight=13.652900in,
natwidth=27.466900in,
height=2.2133in,
width=4.4234in
]%
{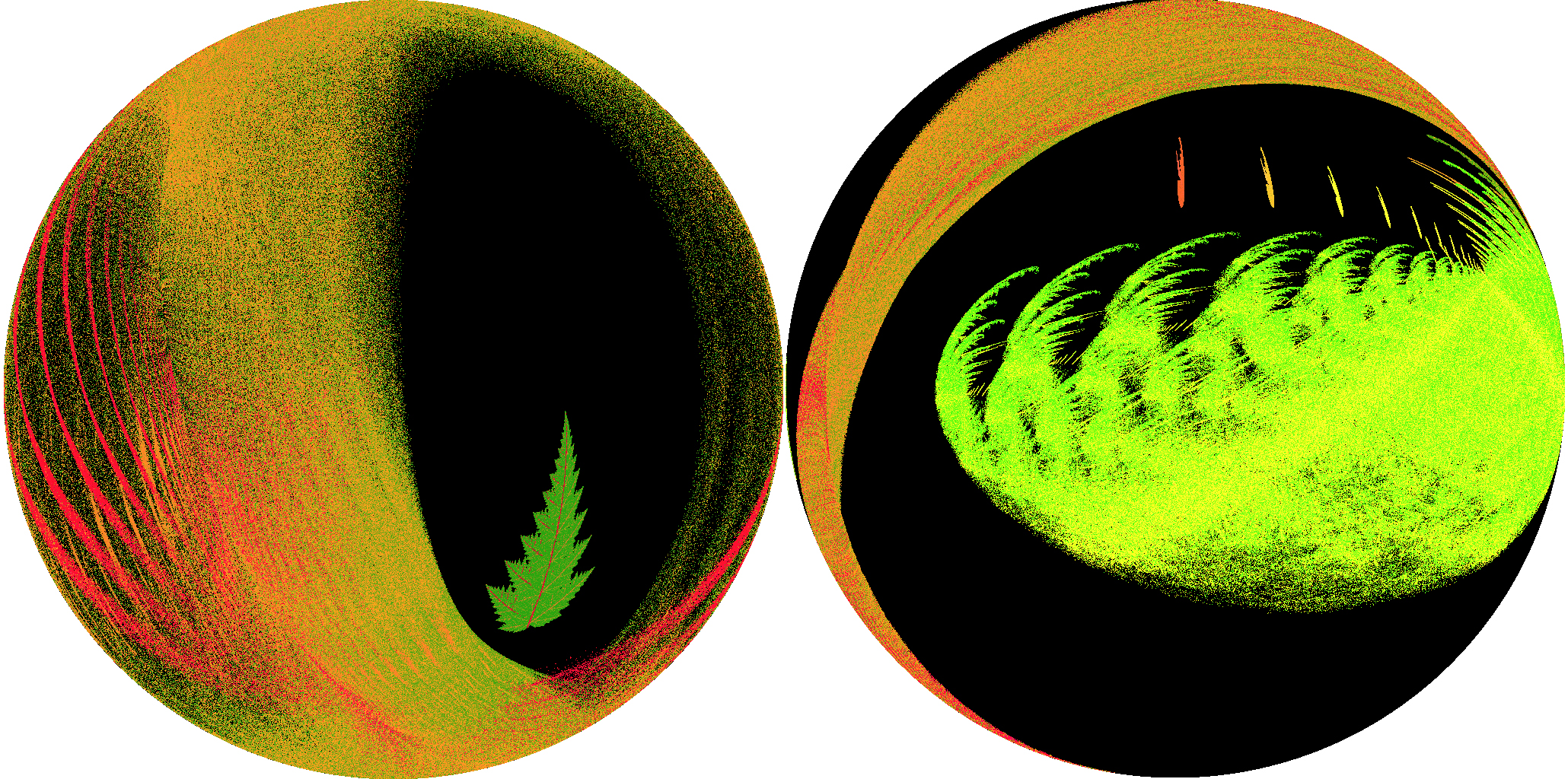}%
\caption{The image on the left shows the attractor and hyperplane repeller of
a projective IFS. The basin of attraction of the leaf-like attractor is the
black convex region together with the leaf. The image on the right shows the
attractor and repeller of the adjoint system.}%
\label{diskleaf1}%
\end{figure}

Theorem \ref{adjointhm} enables us to associate a geometrical index with an
attractor that avoids a hyperplane. More specifically, if an attractor $A$
avoids a hyperplane then $A$ lies in the complement of (the union of the
hyperplanes in) the repeller. Since the connected components of this
complement form an open cover of $A$ and since $A$ is compact, $A$ is actually
contained in a finite set of components of the complement. These observations
lead to the definition of a geometric index of $A$, $index(A),$ as is made
precise in Definition \ref{indexdef2}. This index is an integer associated
with an attractor $A$, not any particular IFS that generates $A$. As shown in
Section~\ref{groupinvsec}, as a consequence of Theorem 4\textbf{,} this index
is nontrivial, in the sense that it can take positive integer values other
than one. Moreover, it is invariant under under $PGL(n+1,\mathbb{R}),$ the
group of real, dimension $n$, projective transformations. That is,
$index(A)=index(g(A))$ for all $g\in PGL(n+1,\mathbb{R})$.

See Remark \ref{remark3} of Section \ref{remarksec} concerning attractors and
repellers in the case of affine IFSs. See Remark \ref{remark4} in
Section~\ref{remarksec} concerning the fact that the Hausdorff dimension of
the attractor is also an invariant under the projective group.

\section{\label{orgsec}Organization}

Since the proofs of our results are quite complicated, this section describes
the structure of this paper, including an overview of the proof of Theorem
\ref{main}.

Section~\ref{hilbert} contains definitions and notation related to iterated
function systems, and background information on projective space, convex sets
in projective space, and the Hilbert metric.

Section \ref{examplesec} provides examples that illustrate the intricacy of
projective IFSs and the value of our results. These examples also illustrate
the role of the avoided hyperplane in statements (1), (2) and (5) of Theorem
\ref{main}.

The proof of Theorem \ref{main} is achieved by showing that
\[
(1)\Rightarrow(2)\Rightarrow(3)\Rightarrow(4)\Rightarrow(1)\Leftrightarrow
(5).
\]

Section \ref{1implies2sec} contains the proof that $(1)\Rightarrow(2),$ by
means of a topological argument. Statement $(2)$ states that the IFS
$\mathcal{F}$ is a \textquotedblleft topological contraction\textquotedblright%
\ in the sense that it sends a nonempty compact set into its interior.

Section \ref{proofoflemmaD} contains the proof of Proposition \ref{lemmaD},
which describes the action of a projective transformation on the convex hull
of a connected set in terms of its action on the connected set. This is a key
result that is used subsequently.

Section~\ref{2implies3sec} contains the proof that $(2)\Rightarrow(3)$ by
means of a geometrical argument, in Lemmas \ref{(2)implies(3)A} and
\ref{(2)implies(3)B}. Statement $(3)$ states that the compact set, in
statement $(2)$, that is sent into its interior can be chosen to be the
disjoint union of finitely many convex bodies. What makes the proof somewhat
subtle is that, in general, there is no single convex body that is mapped into
its interior.

Sections \ref{p1of3implies4sec} and \ref{p2of3implies4sec} contain the proof
that $(3)\Rightarrow(4)$. Statement $(4)$ states that, with respect to an
appropriate metric, each function in $\mathcal{F}$ is a contraction. The
requisite metric is constructed in two stages. On each of the convex bodies in
statement $(3)$, the metric is basically the Hilbert metric as discussed in
Section \ref{hilbert}. How to combine these metrics into a single metric on
the union of the convex bodies is what requires the two sections.

Section \ref{(4)implies(1)sec} contains both the proof that $(4)\Rightarrow
(1)$ and the proof of Theorem \ref{uniquethm}.

Section \ref{dual} contains the proof that $(1)\Leftrightarrow(5),$ namely
that $\mathcal{F}$ has an attractor if and only if $\mathcal{F}^{t}$ has an
attractor. The adjoint IFS $\mathcal{F}^{t}$ consists of those projective
transformations which, when expressed as matrices, are the transposes of the
matrices that represent the functions of $\mathcal{F}$. The proof relies on
properties of an operation, called the complementary dual, that takes subsets
of ${\mathbb{P}^{n}}$ to subsets of ${\mathbb{P}^{n}}$.

Section \ref{dual} also contains the proof of Theorem \ref{adjointhm}, which
concerns the relationship between attractors and repellers. The proof relies
on classical duality between ${\mathbb{P}^{n}}$ and its dual
$\widehat{{\mathbb{P}^{n}}}$, as well as equivalence of statement (4) in
Theorem \ref{main}. Note that, if $\mathcal{F}$ has an attractor $A$ then the
orbit under $\mathcal{F}$ of any compact set in the basin of attraction of $A$
will converge to $A$ in the Hausdorff metric. Theorem \ref{adjointhm} tells us
that if $A$ avoids a hyperplane, then there is also a set $\mathcal{R}$ of
hyperplanes that repel, under the action of $\mathcal{F}$, hyperplanes
\textquotedblleft close\textquotedblright\ to $\mathcal{R}$. The hyperplane
repeller $\mathcal{R}$ is such that the IFS $\mathcal{F}^{-1},$ consisting of
all inverses of functions in $\mathcal{F}$, when applied to the dual space of
${\mathbb{P}^{n}}$, has $\mathcal{R}$ as an attractor. The relationship
between the hyperplane repeller of an IFS $\mathcal{F}$ and the attractor of
the adjoint IFS $\mathcal{F}^{t}$ is described in Proposition
\ref{dualIFSlemma}.

Section \ref{groupinvsec} considers properties of attractors that are
invariant under the projective group $PGL(n+1,\mathbb{R})$ . In particular, we
define $index(A)$ of an attractor $A$ that avoids a hyperplane, and establish
Theorem \ref{indexthm} which shows that this index is a nontrivial group invariant.

Section \ref{remarksec} contains various remarks that add germane information
that could interrupt the flow on a first reading. In particular, the topic of
non-contractive projective IFSs that, nevertheless, have attractors is
mentioned. Other areas open to future research are also mentioned.

\section{\label{hilbert}Iterated Function Systems, Projective Space, Convex
Sets, and the Hilbert Metric}

\subsection{\label{IFSsec}Iterated Function Systems and their Attractors}

\begin{definition}
Let $\mathbb{X}$ be a complete metric space. If $f_{m}:\mathbb{X}%
\rightarrow\mathbb{X}$, $m=1,2,\dots,M,$ are continuous mappings, then
$\mathcal{F}=\left(  \mathbb{X};f_{1},f_{2},...,f_{M}\right)  $ is called an
\textbf{iterated function system} (IFS).
\end{definition}

To define the attractor of an IFS, first define
\[
\mathcal{F}(B)=\bigcup_{f\in\mathcal{F}}f(B)
\]
for any $B\subset\mathbb{X}$. By slight abuse of terminology we use the same
symbol $\mathcal{F}$ for the IFS, the set of functions in the IFS, and for the
above mapping. For $B\subset\mathbb{X}$, let $\mathcal{F}^{k}(B)$ denote the
$k$-fold composition of $\mathcal{F}$, the union of $f_{i_{1}}\circ f_{i_{2}%
}\circ\cdots\circ f_{i_{k}}(B)$ over all finite words $i_{1}i_{2}\cdots i_{k}$
of length $k.$ Define $\mathcal{F}^{0}(B)=B.$

\begin{definition}
\label{attractdef}A nonempty compact set $A\subset\mathbb{X}$ is said to be an
\textbf{attractor} of the IFS $\mathcal{F}$ if

(i) $\mathcal{F}(A)=A$ and

(ii) there is an open set $U\subset\mathbb{X}$ such that $A\subset U$ and
$\lim_{k\rightarrow\infty}\mathcal{F}^{k}(B)=A,$ for all compact sets
$B\subset U$, where the limit is with respect to the Hausdorff metric.

The largest open set $U$ such that (ii) is true is called the \textbf{basin of
attraction} [for the attractor $A$ of the IFS $\mathcal{F}$].
\end{definition}

See Remark \ref{attractremark} in Section \ref{remarksec} concerning a
different definition of attractor.

\begin{definition}
\label{contract-def}A function $f:\mathbb{X}\rightarrow\mathbb{X}$ is called a
\textit{contraction} with respect to a metric $d$ if there is $0\leq\alpha<1$
such that $d(f(x),f(y))\leq\alpha\,d(x,y)$ for all $x,y\in{\mathbb{R}}^{n}$.

An IFS $\mathcal{F}=\left(  \mathbb{X};f_{1},f_{2},...,f_{M}\right)  $ is said
to be \textbf{contractive on a set} $U\subset\mathbb{X}$ if $\mathcal{F}%
(U)\subset U$ and there is a metric $d\,:\,U\times U\rightarrow\lbrack
0,\infty)$, giving the same topology as on $U$, such that, for each
$f\in\mathcal{F}$ the restriction $f|_{U}$ of $f$ to $U$ is a contraction on
$U$ with respect to $d$.
\end{definition}

\subsection{\label{projsec}Projective Space}

Let $\mathbb{R}^{n+1}$ denote $(n+1)$-dimensional Euclidean space and let
$\mathbb{P}^{n}$ denote real \textit{projective space}. Specifically,
$\mathbb{P}^{n}$ is the quotient of $\mathbb{R}^{n+1}\setminus\{0\}$ by the
equivalence relation which identifies $(x_{0},\dots,x_{n})$ with $(\lambda
x_{0},\dots,\lambda x_{n})$ for all nonzero $\lambda\in\mathbb{R}$. Let
\[
\phi:\,{\mathbb{R}}^{n+1}\setminus\{0\}\rightarrow\mathbb{P}^{n}%
\]
denote the canonical quotient map. The set $(x_{0},\dots,x_{n})$ of
coordinates of some $x\in{\mathbb{R}}^{n+1}$ such that $\phi(x)=p$ is referred
to as \textit{homogeneous coordinates} of $p$. If $p,q\in{\mathbb{P}^{n}}$
have homogeneous coordinates $(p_{0},\dots,p_{n})$ and $(q_{0},\dots,q_{n})$,
respectively, and $%
{\textstyle\sum\limits_{i=0}^{n}}
p_{i}q_{i}=0$, then we say that $p$ and $q$ are \textit{orthogonal}, and write
$p\bot q$. A \textit{hyperplane} in ${\mathbb{P}}^{n}$ is a \textit{ }set the
form%
\[
H=H_{p}=\{q\in\mathbb{P}^{n}:p\bot q=0\}\subset\mathbb{P}^{n}\text{,}%
\]
for some $p\in\mathbb{P}^{n}$.

\begin{definition}
A set $X\subset\mathbb{P}^{n}$ is said to \textbf{avoid a hyperplane} if there
exists a hyperplane $H\subset\mathbb{P}^{n}$ such that $H\cap X=\emptyset$.
\end{definition}

We define the \textquotedblleft round\textquotedblright\ metric $d_{\mathbb{P}%
}$ on $\mathbb{P}^{n}$ as follows. Each point $p$ of $\mathbb{P}^{n}$ is
represented by a line in $\mathbb{R}^{n+1}$ through the origin, or by the two
points $a_{p}$ and $b_{p}$ where this line intersects the unit sphere centered
at the origin. Then, in the obvious notation, $d_{{\mathbb{P}}}(p,q)=\min
\left\{  \left\Vert a_{p}-a_{q}\right\Vert ,\left\Vert a_{p}-b_{q}\right\Vert
\right\}  $ where $\left\Vert x-y\right\Vert $ denotes the Euclidean distance
between $x$ and $y$ in $\mathbb{R}^{n+1}$. In terms of homogeneous
coordinates, the metric is given by
\[
d_{\mathbb{P}}(p,q)=\sqrt{2-2\,\frac{|\langle p,q\rangle|}{\Vert p\Vert\Vert
q\Vert}},
\]
where $\langle\cdot,\cdot\rangle$ is the usual Euclidean inner product. The
metric space $(\mathbb{P}^{n},d_{\mathbb{P}})$ is compact.

A \textit{projective transformation} $f$ is an element of PGL$(n+1,{\mathbb{R}
})$, the quotient of GL$(n+1,{\mathbb{R}})$ by the multiples of the identity
matrix. A mapping $f\,:\,${$\mathbb{P}^{n}$}$\rightarrow\mathbb{P}^{n}$ is
well defined by $f(\phi x)=\phi(L_{f}x)$, where $L_{f}:{\mathbb{R}}
^{n+1}\rightarrow{\mathbb{R}}^{n+1}$ is any matrix representing projective
transformation $f$. In other words, the following diagram commutes:
\[%
\begin{array}
[c]{ccc}
& L_{f} & \\
{\mathbb{R}}^{n+1} & \rightarrow & {\mathbb{R}}^{n+1}\\
\phi\downarrow &  & \downarrow\phi\\
\mathbb{P}^{n} & \rightarrow & \mathbb{P}^{n}.\\
& f &
\end{array}
\]
When no confusion arises we may designate an $n$-dimensional projective
transformation $f$ by a matrix $L_{f} \in GL(n+1,{\mathbb{R}})$ that
represents it. An IFS $\mathcal{F}=\left(  \mathbb{P}{^{n}};f_{1}%
,f_{2},...,f_{M}\right)  $ is called a \textit{projective IFS} if each
$f\in\mathcal{F}$ is a projective transformation on $\mathbb{P}{^{n}}$.

\subsection{\label{convsec}Convex subsets of $\mathbb{P}^{n}$.}

We now define the notions of convex set, convex body, and convex hull of a set
with respect to a hyperplane. In Proposition \ref{lemmaD} we state an
invariance property that plays a key role in the proof of Theorem 1.

If $H\subset{\mathbb{P}}^{n}$ is a hyperplane, then there is a unique
hyperplane $\overline{H}\in{\mathbb{R}}^{n+1}$ such that $\phi(\overline
{H})=H$. If $p\in{\mathbb{P}}^{n}\setminus H$, there is a unique
$1$-dimensional subspace $\overline{p}\in{\mathbb{R}}^{n+1}$ such that
$\phi(\overline{p})=p.$ Let $u$ be a unit vector orthogonal to $\overline{H}$
and $W=\{x:\langle x,u\rangle=1\}$ be the corresponding affine subspace of
$\mathbb{R}^{n+1}$. Define a mapping $\theta\,:\,{\mathbb{P}}^{n}\setminus
H\rightarrow W$ by letting $\theta(p)$ be the intersection of $\overline{p}$
with $W$. Now $\theta$ is a surjective mapping from ${\mathbb{P}}^{n}\setminus
H$ onto the $n$-dimensional affine space $W$ such that projective subspaces of
${\mathbb{P}}^{n}\setminus H$ go to affine subspaces of $W$. In light of the
above, it makes sense to consider ${\mathbb{P}}^{n}\setminus H$ as an affine space.

\begin{definition}
\label{convbdy} A set $S\subset\mathbb{P}^{n}\backslash H$ is said to be
\textbf{convex} \textbf{with respect to a hyperplane }$H$ if $S$ is a convex
subset of $\mathbb{P}^{n}\backslash H$, considered as an affine space as
described above. Equivalently, with notation as in the above paragraph, $S$ is
convex with respect to $H$ if $\theta(S)$ is a convex subset of $W$. A closed
set that is convex with respect to a hyperplane and has nonempty interior is
called a\textbf{ convex body}.
\end{definition}

It is important to distinguish this definition of "convex" from projective
convex, which is the term often used to describe a set $S\subset\mathbb{P}%
^{n}$ with the property that if $l$ is a line in $\mathbb{P}^{n}$ then $S\cap
l$ is connected. (See \cite{dekker, groot} for a discussion of related matters.)

\begin{definition}
Given a hyperplane $H\subset\mathbb{P}^{n}$ and two points $x,y\in
\mathbb{P}^{n}\setminus H$, the unique line $\overline{xy}$ through $x$ and
$y$ is divided into two closed line segments by $x$ and $y$. The one that does
does \textit{not} intersect $H$ will be called the \textbf{line segment with
respect to }$H$\textbf{ }and denoted\textbf{ }$\overline{xy}_{H}$.
\end{definition}

Note that $C$ is convex with respect to a hyperplane $H$ if and only
if\textbf{ }$\overline{xy}_{H}\subset C$ for all $x,y\in C$.

\begin{definition}
\label{convhulldef}Let $S\subset\mathbb{P}^{n}$ and let $H$ be a hyperplane
such that $S\cap H=\emptyset$. The \textbf{convex hull of }$S$\textbf{ with
respect to }$H$ is
\[
conv_{H}(S)=conv(S),
\]
where $conv(S)$ is the usual convex hull of $S$, treated as a subset of the
affine space $\mathbb{P}^{n}\backslash H$. Equivalently, with notation as
above, if $S^{\prime}=conv(\theta(S)),$ where $conv$ denotes the ordinary
convex hull in $W$, then $conv_{H}(S)=\phi(S^{\prime}).$
\end{definition}

We can also describe $conv_{H}(S)$ as the smallest convex subset of
$\mathbb{P}^{n}\backslash H$ that contains $S$, i.e., the intersection of all
convex sets of $\mathbb{P}^{n}\backslash H$ containing $S$. The key result
concerning convexity and projective transformations is Proposition
\ref{lemmaD} in Section \ref{proofoflemmaD}.

\subsection{The Hilbert metric}

In this section we define the Hilbert metric associated with a convex body.

Let $p,q\in\mathbb{P}^{n}$, with $p\neq q$ and with homogeneous coordinates
$p=(p_{0},\dots,p_{n})$ and $q=(q_{0},\dots,q_{n})$. Any point $r$ on the line
$\overline{pq}$ has homogeneous coordinates $r_{i}=\alpha_{1}\,p_{i}%
+\alpha_{2}q_{i},\;i=0,1,\dots,n$. The pair $(\alpha_{1},\alpha_{2})$ is
referred to as the \textit{homogeneous parameters} of $r$ with respect to $p$
and $q$. Since the homogeneous coordinates of $p$ and $q$ are determined only
up to a scalar multiple, the same is true of the homogeneous parameters
$(\alpha_{1},\alpha_{2})$.

Let $a=(\alpha_{1},\alpha_{2}),b=(\beta_{1},\beta_{2}),c=(\gamma_{1}%
,\gamma_{2}),d=(\delta_{1},\delta_{2})$ be any four points on such a line in
terms of homogeneous parameters. Their \textit{cross ratio} $R(a,b,c,d)$, in
terms of homogeneous parameters on the projective line, is defined to be
\begin{equation}
R(a,b,c,d)=\frac{%
\begin{vmatrix}
\gamma_{1} & \alpha_{1}\\
\gamma_{2} & \alpha_{2}%
\end{vmatrix}
}{%
\begin{vmatrix}
\gamma_{1} & \beta_{1}\\
\gamma_{2} & \beta_{2}%
\end{vmatrix}
}\div\frac{%
\begin{vmatrix}
\delta_{1} & \alpha_{1}\\
\delta_{2} & \alpha_{2}%
\end{vmatrix}
}{%
\begin{vmatrix}
\delta_{1} & \beta_{1}\\
\delta_{2} & \beta_{2}%
\end{vmatrix}
}.
\end{equation}
The key property of the cross ratio is that it is invariant under any
projective transformation and under any change of basis $\{p,q\}$ for the
line. If none of the four points is the first base point $p$, then the
homogeneous parameters of the points are $(\alpha,1),(\beta,1),(\gamma
,1),(\delta,1)$ and the cross ratio can be expressed as the ratio of (signed)
distances:
\[
R(a,b,c,d)=\frac{(\gamma-\alpha)(\delta-\beta)}{(\gamma-\beta)(\delta-\alpha
)}.
\]

\begin{definition}
Let $K\subset\mathbb{P}^{n}$ be a convex body. Let $H\subset\mathbb{P}^{n}$ be
a hyperplane such that $H\cap K=\varnothing$. Let $x$ and $y$ be distinct
points in $int(K).$ Let $a$ and $b$ be two distinct points in the boundary of
$K$ such that $\overline{xy}_{H}\subset\overline{ab}_{H},$ where the order of
the points along the line segment $\overline{ab}_{H}$ is $a,x,y,b$. The
\textbf{Hilbert metric} $d_{K}$ on $int(K)$ is defined by%
\[
d_{K}(x,y)=\log R(a,b,x,y)=\log\left(  \frac{|ay|\,|bx|}{|ax|\,|by|}\right)
.
\]
Here $|ay|=\left\Vert a^{\prime}-y^{\prime}\right\Vert $,$|bx|=\left\Vert
b^{\prime}-x^{\prime}\right\Vert ,|ax|=\left\Vert a^{\prime}-x^{\prime
}\right\Vert ,\,|by|=\left\Vert b^{\prime}-y^{\prime}\right\Vert $ denote
Euclidean distances associated with any set of collinear points $a^{\prime
},x^{\prime},y^{\prime},b^{\prime}\in\mathbb{R}^{n+1}$ such that
$\phi(a^{\prime})=a$, $\phi(x^{\prime})=x$, $\phi(y^{\prime})=y$, and
$\phi(b^{\prime})=b$.
\end{definition}

A basic property of the Hilbert metric is that it is a projective invariant.
See \cite[p.105]{buseman} for a more complete discussion of the properties of
this metric. See Remark \ref{remark4} in Section \ref{remarksec} concerning
the relationship between the metrics $d_{\mathbb{P}}$ and $d_{K}$ and its
relevance to the evaluation and projective invariance of the Hausdorff dimension.

\section{\label{examplesec}Examples}

\noindent\textbf{EXAMPLE 1 }[IFSs with one transformation]: Let $\mathcal{F}%
=(\mathbb{P}^{n};f)$ be a projective IFS with a single transformation. By
Theorem~\ref{main} such an IFS has an attractor if and only if any matrix
$L_{f}$ repesenting $f$ has a dominant eigenvalue. (The map $L_{f}$ has a real
eigenvalue $\lambda_{0}$ with corresponding eigenspace of dimension $1$, such
that $\lambda_{0}>|\lambda|$ for every other eigenvalue $\lambda$.) For such
an IFS the attractor is a single point whose homogeneous coordinates are the
coordinates of the eigenvector corresponding to $\lambda_{0}$. The hyperplane
repeller of $\mathcal{F}$ is the single hyperplane $\phi(E)$, where $E$ is the
span of the eigenspaces corresponding to all eigenvalues of $L_{f}$ except
$\lambda_{0}$. The attractor of the adjoint IFS is also a single point,
$\phi(E^{\bot})$, where $E^{\bot}$ is the unique line through the origin in
$\mathbb{R}^{n+1}$ perpendicular to the hyperplane $E$.\medskip

\noindent\textbf{EXAMPLE 2} [Convex hull caveat ]: In Theorem \ref{main} the
implication $(2)\Rightarrow(3)$ contains a subtle issue. It may seem, at first
sight, to be trivial because surely one could choose $C$ simply to be the
convex hull of $U$? The following example shows that this is not true.
Let\textbf{ }$\mathcal{F}=(\mathbb{P}^{1};f_{1},f_{2})$ where
\[
f_{1}=
\begin{pmatrix}
4 & 0\\
1 & 1
\end{pmatrix}
,\qquad f_{2}=
\begin{pmatrix}
-4 & 0\\
1 & 1
\end{pmatrix}
.
\]
In $\mathbb{P}^{1}$ a hyperplane is just a point. Let $H_{0}=%
\begin{pmatrix}
0\\
1
\end{pmatrix}
$ and $H_{\infty}=%
\begin{pmatrix}
1\\
0
\end{pmatrix}
$ be two hyperplanes and consider the four points $p=%
\begin{pmatrix}
-9\\
1
\end{pmatrix}
,$ $q=%
\begin{pmatrix}
-2\\
1
\end{pmatrix}
,$ $r=%
\begin{pmatrix}
2\\
1
\end{pmatrix}
,$ and $s=%
\begin{pmatrix}
9\\
1
\end{pmatrix}
$ in $\mathbb{P}^{1}.$ Let $C_{1}$ be the line segment $\overline{pq}_{H_{0}}$
and let $C_{2}=\overline{rs}_{H_{0}}.$ There are two possible convex hulls of
$C_{1}\cup C_{2}$, one with \ respect to the hyperplane $H_{0}$ for example
and the other with respect to $H_{\infty}$ for example. It is routine to check
that $\mathcal{F}\left(  \overline{C_{1}\cup C_{2}}\right)  \subset C_{1}\cup
C_{2}$ but $\mathcal{F}\left(  \overline{conv_{H}(C_{1}\cup C_{2})}\right)
\nsubseteq conv_{H}(C_{1}\cup C_{2})$, where $H$ is either $H_{0}$ or
$H_{\infty}$. Thus the situation is fundamentally different from the affine
case; see \cite{ABVW}.\medskip

\noindent\textbf{EXAMPLE 3} [A non-contractive IFS with an attractor
]:\textit{ }Theorem \ref{main} leaves open the possible existence of a
non-contractive IFS that, nevertheless, has an attractor. According to Theorem
\ref{main} such an attractor must have nonempty intersection with every
hyperplane. The following example shows that such an IFS does exist. Let
$\mathcal{F}=\left(  P^{2};f_{1},f_{2}\right)  $ where
\[
f_{1}=
\begin{pmatrix}
1 & 0 & 0\\
0 & 2 & 0\\
0 & 0 & 2
\end{pmatrix}
\qquad\text{and}\qquad f_{2}=
\begin{pmatrix}
1 & 0 & 0\\
0 & 2\cos\,\theta & -2\sin\,\theta\\
0 & 2\sin\,\theta & 2\cos\,\theta
\end{pmatrix}
,
\]
and $\theta/\pi$ is irrational. In terms of homogeneous coordinates $(x,y,z)$,
the attractor of $\mathcal{F}$ is the line $x=0$.

Another example is illustrated in Figure \ref{special}, where
\[
f_{1}=
\begin{pmatrix}
41 & -19 & 19\\
-19 & 41 & 19\\
19 & 19 & 41
\end{pmatrix}
\qquad\text{and}\qquad f_{2}=
\begin{pmatrix}
-10 & -1 & 19\\
-10 & 21 & 1\\
10 & 10 & 10
\end{pmatrix}
,
\]
Neither function $f_{1}$ nor $f_{2}$ has an attractor, but the IFS consisting
of both of them does. The union $A$ of the points in the red and green lines
is the attractor. Since any two lines in $\mathbb{P}^{2}$ have nonempty
intersection, the attractor $A$ has nonempty intersection with every
hyperplane. Consequently by Theorem \ref{main}, there exist no metric with
respect to which both functions are contractive. In the right panel a zoom is
shown which displays the fractal structure of the set of lines that comprise
the attractor. The color red is used to indicate the image of the attractor
under $f_{1}$, while green indicates its image under $f_{2}$.

\begin{figure}[ptb]%
\centering
\includegraphics[
natheight=13.652900in,
natwidth=27.466900in,
height=2.0772in,
width=4.1486in
]%
{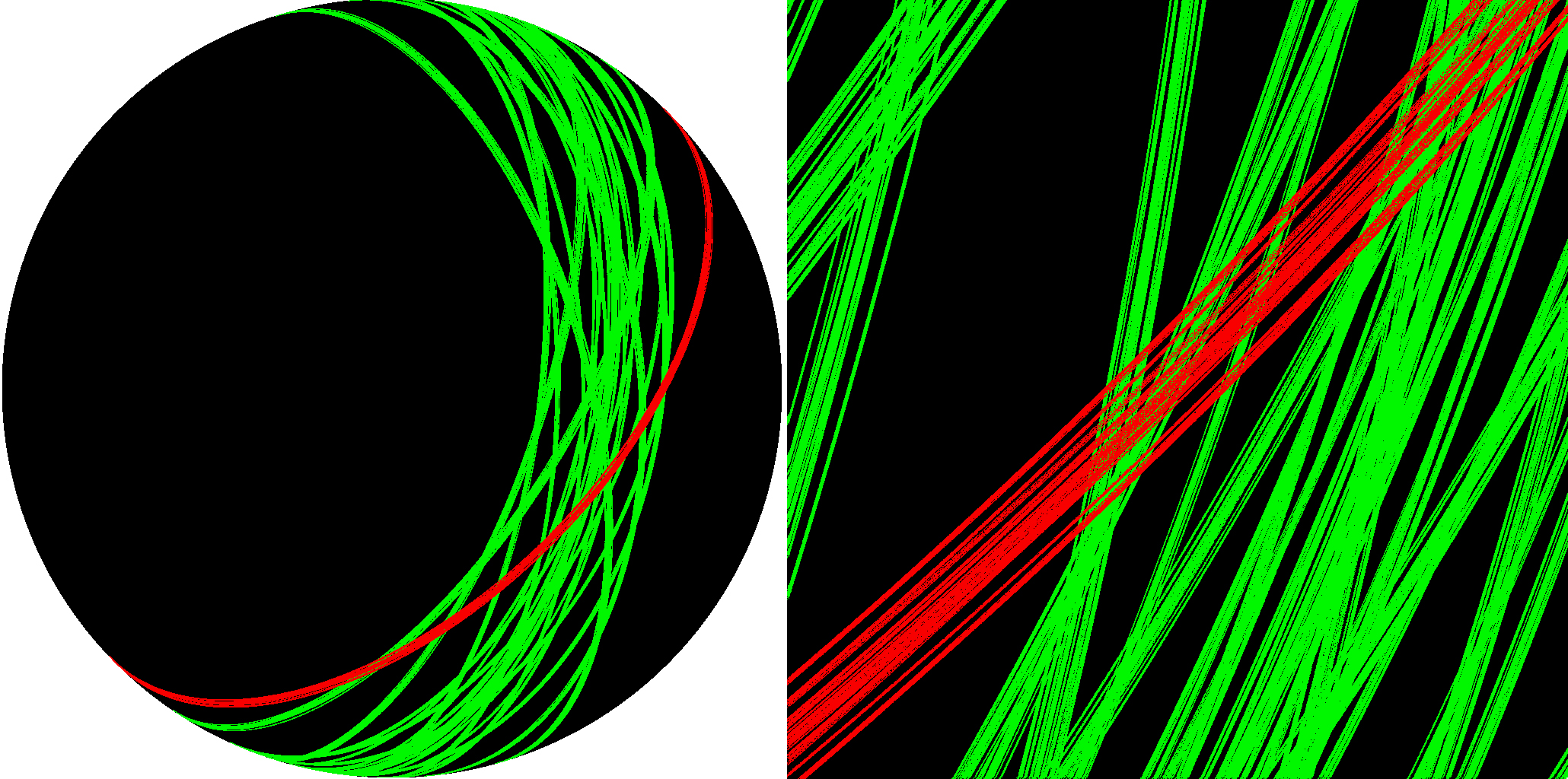}%
\caption{Projective attractor which includes a hyperplane, and a zoom. See
Example 3.}%
\label{special}%
\end{figure}
\medskip

\noindent\textbf{EXAMPLE 4} [Attractor discontinuity ]: This example consists
of a family $F=\{\mathcal{F}(t):t\in\mathbb{R\}}$ of projective IFSs that
depend continuously on a real parameter $t$. The example demonstrates how
behaviour of a projective family $F$ may be more complicated than in the
affine case. Let $\mathcal{F}(t)=(\mathbb{P}^{2};f_{1},f_{2},f_{3})$ where
\[
\begin{aligned}
f_1  & =
\begin{pmatrix}
198t+199 & 198t+198 & -198t^{2}-297t-99\\
0 & 1 & 0\\
198 & 198 & -198t-98
\end{pmatrix},\\
f_2 &  =
\begin{pmatrix}
397 & 396 & -594\\
0 & 1 & 0\\
198 & 198 & -296
\end{pmatrix}
\text{, and } f_3=
\begin{pmatrix}
595 & 594 & -1485\\
0 & 1 & 0\\
198 & 198 & -494
\end{pmatrix}.
\end{aligned}
\]
This family interpolates quadratically between three IFSs, $\mathcal{F}(0),$
$\mathcal{F}(1),$ and $\mathcal{F}(2)$, each of which has an attractor that
avoids a hyperplane. But the IFSs $\mathcal{F}(0.5)$ and $\mathcal{F}(1.5)$ do
not have an attractor$.$ This contrasts with the affine case, where similar
interpolations yield IFSs that have an attractor at all intermediate values of
the parameter. For example, if hyperbolic affine IFSs $\mathcal{F}$ and
$\mathcal{G}$ each have an attractor, then so does the average IFS, $\left(
t\mathcal{F}+(1-t)\mathcal{G}\right)  $ for all $t\in\lbrack0,1]$.

\section{\label{1implies2sec}Proof that $(1)\Rightarrow(2)$ in Theorem
\ref{main}}

\begin{lemma}
\label{(1)implies(2)} (i) If the projective IFS $\mathcal{F}$ has an attractor
$A$ then there is a nonempty open set $U$ such that $A\subset U$,
$\mathcal{F}(\overline{U})\subset U$, and $\overline{U}$ is contained in the
basin of attraction of $A$.

(ii) \textbf{[Theorem \ref{main} $\mathbf{(1)\Rightarrow(2)}$]} If the
projective IFS $\mathcal{F}$ has an attractor $A$ and there is a hyperplane
$H$ such that $H\cap A=\emptyset$, then there is a nonempty open set $U$ such
that $A\subset U,$ $\overline{U}\cap H=\emptyset,$ $\mathcal{F}(\overline
{U})\subset U$, and $\overline{U}$ is contained in the basin of attraction of
$A$.
\end{lemma}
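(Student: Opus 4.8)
The plan is to prove part (i) first and then upgrade it to part (ii) by a compactness argument that keeps everything away from the hyperplane $H$. For part (i), start from the attractor $A$ with basin of attraction $\mathcal{O}$ (an open set with $A\subset\mathcal{O}$), and recall that $\mathcal{F}^k(B)\to A$ in the Hausdorff metric for every compact $B\subset\mathcal{O}$. Since $A$ is compact and $\mathcal{F}(A)=A\subset\mathcal{O}$, and since $\mathcal{F}$ is continuous (hence the set-valued map $B\mapsto\mathcal{F}(B)$ is continuous on compacta in the Hausdorff metric), one can find a compact neighborhood $N$ of $A$ with $N\subset\mathcal{O}$ and $\mathcal{F}(N)\subset\mathrm{int}(N)$: take a small closed metric neighborhood $\overline{A_\delta}$ of $A$; by continuity of $\mathcal{F}$ and compactness of $A$, $\mathcal{F}(\overline{A_\delta})$ is within $\varepsilon$ of $\mathcal{F}(A)=A$ for $\delta$ small, so $\mathcal{F}(\overline{A_\delta})\subset A_{\varepsilon}\subset\mathrm{int}(\overline{A_{\varepsilon}})$, and choosing $\delta,\varepsilon$ compatibly (e.g. $\varepsilon<\delta$) gives $\mathcal{F}(\overline{A_\delta})\subset\mathrm{int}(\overline{A_\delta})$ once we also ensure $\overline{A_\delta}\subset\mathcal{O}$. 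Actually the cleanest route is: because $\mathcal{F}^k(\overline{A_\delta})\to A$, there is $k$ with $\mathcal{F}^k(\overline{A_\delta})\subset A_{\delta/2}$; then set $U=\bigcup_{j=0}^{k-1}\mathcal{F}^j(A_{\delta/2})\cup A_{\delta}$ after shrinking so the closure stays in $\mathcal{O}$ — but I would instead just iterate the one-step estimate. Either way, set $U=\mathrm{int}(N)$; then $A\subset U$, $\overline{U}\subset N\subset\mathcal{O}$, and $\mathcal{F}(\overline{U})\subset\mathcal{F}(N)\subset\mathrm{int}(N)=U$.

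For part (ii), we are additionally given a hyperplane $H$ with $H\cap A=\emptyset$. The set $A$ is compact and $H$ is closed, so $d_{\mathbb{P}}(A,H)=:2\rho>0$. Now redo the construction of part (i) but at every stage intersect with, or restrict to, the open set $V:=\{x:d_{\mathbb{P}}(x,H)>\rho\}$, which is an open neighborhood of $A$ whose closure avoids $H$. The subtlety is that $V$ need not be forward-invariant under $\mathcal{F}$, so one cannot simply run the previous argument inside $V$. Instead: start with $\overline{A_\delta}$ where $\delta$ is chosen small enough that $\overline{A_\delta}\subset\mathcal{O}\cap V$ (possible since $A\subset\mathcal{O}\cap V$ and both are open). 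Because $\mathcal{F}^k(\overline{A_\delta})\to A$ in the Hausdorff metric, choose $k$ so large that $\mathcal{F}^k(\overline{A_\delta})\subset A_{\delta}\subset V$. Then define
\[
U=\mathrm{int}\!\left(\bigcup_{j=0}^{k}\mathcal{F}^{j}(\overline{A_{\delta'}})\right)
\]
for a slightly smaller $\delta'<\delta$; the point is that the finite union of the compact sets $\mathcal{F}^j(\overline{A_{\delta'}})$, $j=0,\dots,k$, is compact, contains $A$ in its interior, and — the key step — one checks $\mathcal{F}$ maps this union into its own interior by using that $\mathcal{F}^{k+1}(\overline{A_{\delta'}})\subset \mathcal{F}(\mathcal{F}^k(\overline{A_{\delta'}}))\subset\mathcal{F}(A_{\delta''})\subset A_{\delta'}$ for suitable nested $\delta''<\delta'$, so applying $\mathcal{F}$ shifts the index $j$ up by one and the top term $j=k+1$ falls back inside the $j=0$ term. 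Finally, because each $\mathcal{F}^j(\overline{A_{\delta'}})$ for $j\le k$ lies in $V$ (for $j<k$ by shrinking $\delta$ at the outset so the whole orbit up to time $k$ stays in the $\rho$-neighborhood — here we use continuity/uniform continuity of the finitely many compositions of length $\le k$ on the compact set $\overline{A_\delta}$, so orbits of points near $A$ track orbits of points of $A$, which lie in $A\subset V$), we get $\overline{U}\subset V$, hence $\overline{U}\cap H=\emptyset$. And $\overline U\subset\mathcal O$ for the same reason, so $\overline U$ lies in the basin of attraction.

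The main obstacle is precisely the lack of forward-invariance of the $\rho$-neighborhood $V$ of $H$: an orbit starting near $A$ may wander outside $V$ before converging, so one must control the finite initial segment of the orbit via uniform continuity of the length-$\le k$ compositions and the fact that orbits of points of $A$ itself never leave $A$ (hence never leave $V$). Once $\delta$ is chosen small enough that the length-$\le k$ orbit of $\overline{A_\delta}$ stays within distance $\rho$ of the corresponding orbit points of $A$, everything goes through; the remaining bookkeeping with the nested radii $\delta''<\delta'<\delta$ and the telescoping union is routine and mirrors the standard proof that an attractor has a "trapping region." I would present the telescoping-union construction once, carefully, and note that part (i) is the special case $V=\mathbb{P}^n$.
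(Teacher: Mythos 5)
There is a genuine gap at the key step of your construction, namely the claim that $\mathcal{F}$ maps $N:=\bigcup_{j=0}^{k}\mathcal{F}^{j}(\overline{A_{\delta'}})$ into $\mathrm{int}(N)$. Your index-shifting argument only yields $\mathcal{F}(N)\subset N$: the top term is handled by the wrap-around $\mathcal{F}^{k+1}(\overline{A_{\delta'}})\subset A_{\delta'}$, but the intermediate images $\mathcal{F}^{j}(\overline{A_{\delta'}})$, $1\le j\le k$, are only known to be subsets of $N$, not of its interior. Since the lemma assumes nothing like contractivity (that is exactly what statement (4) of Theorem 1 is trying to produce), a single application of $\mathcal{F}$ can expand transversally near $A$: think of a linear map with eigenvalues of modulus less than one but a huge off-diagonal entry, so that the image of a small ball around the fixed point is a long thin set whose tips stick far out of the ball and need not be covered by any other term $\mathcal{F}^{i}(\overline{A_{\delta'}})$. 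Such tip points lie in $\mathcal{F}(N)$ but on the boundary of $N$, so $U=\mathrm{int}(N)$ does not satisfy $\mathcal{F}(\overline{U})\subset U$. The same flaw undercuts your one-step argument in part (i): continuity gives, for each $\varepsilon$, some $\delta$ with $\mathcal{F}(\overline{A_{\delta}})\subset A_{\varepsilon}$, but you cannot ``choose $\delta,\varepsilon$ compatibly'' with $\varepsilon<\delta$, because $\delta$ is a function of $\varepsilon$ and may be forced to be much smaller. The paper avoids this by building the covering sets with slack at every step and in the reverse direction: it chooses open sets $O_{k}$ recursively backwards, with $\mathcal{F}^{k}(\overline{O})\subset O_{k}$ and $\overline{O_{k-1}}\subset\mathcal{F}^{-1}(O_{k})\cap(\mathbb{P}^{n}\setminus H)$, so that $\mathcal{F}(\overline{O_{k-1}})\subset O_{k}$ (an open set) holds by construction, and then $U=\bigcup_{k=0}^{K-1}O_{k}$ maps its closure into itself because $O_{K}=O\subset O_{0}$. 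That recursive fattening is the idea your proposal is missing.

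A secondary point: your handling of the hyperplane and the basin is closer to workable but not ``routine bookkeeping'' as written. You fix $k$ from the convergence $\mathcal{F}^{k}(\overline{A_{\delta}})\to A$, then shrink to $\delta'$ to keep the first $k$ iterates inside $V$, but the wrap-around then needs $\mathcal{F}^{k}(\overline{A_{\delta'}})\subset A_{\delta''}$ for a $\delta''$ determined by $\delta'$, which in general forces a new, larger $k$, hence a new $\delta'$, and so on. This circularity can be untangled (first get an $m$ with $\mathcal{F}^{j}(\overline{A_{\delta_{0}}})\subset A_{\delta_{0}}$ for all $j\ge m$, then shrink the initial radius to control the finitely many times $j<m$, only afterwards choosing the return time), and indeed the paper's first stage does essentially this via the sets $V_{k}=V\cap\mathcal{F}^{-1}(V_{k+1})$, producing an open $O\supset A$ whose entire forward orbit stays in $V$. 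But even with that repaired, your proposal still needs the interior argument above, which is the substantive missing step.
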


\begin{proof}
We prove (ii) first. The proof will make use of the function $\mathcal{F}%
^{-1}(X)=\{x\in\mathbb{P}^{n}\,:\,f(x)\in X\;\text{for all}\;f\in{\mathcal{F}%
}\}$. Note that ${\mathcal{F}}^{-1}$ takes open sets to open sets,
$X\subset({\mathcal{F}}^{-1}\circ{\mathcal{F}})(X)$ and $({\mathcal{F}}%
\circ{\mathcal{F}}^{-1})(X)\subset X$ for all $X$.

Since $A$ is an attractor contained in $\mathbb{P}^{n}\setminus H$, there is
an open set $V$ containing $A$ such that $\overline{V}$ is compact,
$\overline{V}\subset\mathbb{P}^{n}\setminus H$, and $A=\lim_{k\rightarrow
\infty}{\mathcal{F}}^{k}(\overline{V})$. Hence there is an integer $m$ such
that $\mathcal{F}^{k}(\overline{V})\subset V$ for $k\geq m$.

Define $V_{k},\,k=0,1,\dots,m,$ recursively, going backwards from $V_{m}$ to
$V_{0}$, as follows. Let $V_{m}=V$ and for $k=m-1,\dots,2,1,0,$ let
$V_{k}=V\cap{\mathcal{F}}^{-1}(V_{k+1})$. If $O=V_{0}$, then $O$ has the
following properties:

\begin{enumerate}
\item $O$ is open,

\item $A\subset O$,

\item ${\mathcal{F}}^{k}(O)\subset V$ for all $k\geq0$.
\end{enumerate}

To check property (2) notice that ${\mathcal{F}}(A)=A$ implies $A\subset
({\mathcal{F}}^{-1}\circ{\mathcal{F}})(A)={\mathcal{F}}^{-1}(A)$. Then
$A\subset V=V_{m}$ implies that $A\subset V_{m}$ for all $m$, in particular
$A\subset V_{0}=O$. To check property (3) notice that $V_{k}\subset
{\mathcal{F}}^{-1}(V_{k+1})$ implies ${\mathcal{F}}(V_{k})\subset
({\mathcal{F}}\circ{\mathcal{F}}^{-1})(V_{k+1})\subset V_{k+1}$. It then
follows that ${\mathcal{F}}^{k}(O)\subset V_{k}\subset V$ for $0\leq k\leq m$.
Also ${\mathcal{F}}^{k}(O)\subset{\mathcal{F}}^{k}(V)\subset V$ for all $k>m$.

Since $A=\lim_{n\rightarrow\infty}{\mathcal{F}}^{n}(\overline{O})$, there is
an integer $K$ such that ${\mathcal{F}}^{K}(\overline{O})\subset O$. Let
$O_{k},\,k=0,1,\dots,K,$ be defined recursively, going backwards from $O_{K}$
to $O_{0}$, as follows. Let $O_{K}=O$ , and for $k=K-1,\dots,2,1,0,$ let
$O_{k}$ be an open set such that

\begin{enumerate}
\item[(4)] ${\mathcal{F}}^{k}(\overline{O})\subset O_{k}$,

\item[(5)] $\overline{O_{k}}\subset\mathbb{P}^{n}\setminus H$, and

\item[(6)] ${\mathcal{F}}(\overline{O_{k}})\subset O_{k+1}$.
\end{enumerate}

To verify that a set $O_{k}$ with these properties exists, first note that
property (4) holds for $k=K$. To verify the properties for all $k=K-1,\dots
,2,1,0$ inductively, assume that $O_{k},\,k\geq1,$ satisfies property (4).
Using property (4) we have ${\mathcal{F}}^{k-1}(\overline{O})\subset
{\mathcal{F}}^{-1}({\mathcal{F}}^{k}(\overline{O}))\subset{\mathcal{F}}%
^{-1}(O_{k})$ and using property (3) we have ${\mathcal{F}}^{k-1}(\overline
{O})\subset\overline{V}\subset\mathbb{P}^{n}\setminus H$. Now choose $O_{k-1}$
to be an open set such that ${\mathcal{F}}^{k-1}(\overline{O})\subset O_{k-1}$
and $\overline{O_{k-1}}\subset{\mathcal{F}}^{-1}(O_{k})\cap(\mathbb{P}%
^{n}\setminus H)$. The last inclusion implies ${\mathcal{F}}(\overline
{O_{k-1}})\subset O_{k}$.

We claim that
\[
U=\bigcup_{k=0}^{K-1}O_{k}%
\]
satisfies the properties in the statement of part (ii) of the lemma. (*) By
property (5) we have $\overline{U}\cap H=\emptyset$. By properties (2) and (4)
we have $A={\mathcal{F}}^{k}(A)\subset{\mathcal{F}}^{k}(O)\subset O_{k}$ for
each $k$, which implies $A\subset U$. Lastly,
\[
{\mathcal{F}}(\overline{U})=\bigcup_{k=0}^{K-1}{\mathcal{F}}(\overline{O_{k}%
})\subset\bigcup_{k=1}^{K}O_{k}=\bigcup_{k=1}^{K-1}O_{k}\cup O_{K}\subset
U\cup O\subset U\cup O_{0}\subset U,
\]
the first inclusion coming from property (6) and the second to last inclusion
coming from property (4) applied to $k=0$. This completes the proof that there
is a nonempty open set $U$ such that $A\subset U$. $\overline{U}\cap
H=\emptyset,$ and $\mathcal{F}(\overline{U})\subset U$. Now note that, by
construction, $\overline{U}$ is such that $\mathcal{F}^{K}(\overline
{U})\subset O_{K}=O$ and that $\overline{O}$ lies in $\overline{V}$ which lies
in the basin of attraction of $A,$ which implies that $\overline{U}$ is
contained in the basin of attraction of $A$. This completes the proof of (ii).

The proof of (i) is the same as the above proof of (ii), except that
$\mathbb{P}^{n}\backslash H$ is replaced by $\mathbb{P}^{n}$ throughout, and
the sentence (*) is omitted.
\end{proof}

\section{\label{proofoflemmaD}Projective transformations of convex sets}

This section describes the action of a projective transformation on a convex
set. We develop the key result, Proposition \ref{lemmaD}, that is used subsequently.

Proposition~\ref{lemmaA} states that the property of being a convex subset
(with respect to a hyperplane) of a projective space is preserved under a
projective transformation.

\begin{proposition}
\label{lemmaA} Let $f\,:\mathbb{P}^{n}\rightarrow\mathbb{P}^{n}$ be a
projective transformation. For any two hyperplanes $H,H^{\prime}$ with $S\cap
H=\emptyset$ and $f(S)\cap H^{\prime}=\emptyset$, the set $S\subset
\mathbb{P}^{n}$ is a convex set with respect to $H$ if and only if $f(S)$ is
convex with respect to $H^{\prime}.$
\end{proposition}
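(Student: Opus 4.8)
The plan is to reduce everything to the behavior of $f$ on the two affine charts $\mathbb{P}^n\setminus H$ and $\mathbb{P}^n\setminus H'$. Recall from the discussion preceding Definition~\ref{convbdy} that $\mathbb{P}^n\setminus H$ carries a natural affine structure via the map $\theta=\theta_H$ identifying it with the affine hyperplane $W=\{x:\langle x,u\rangle=1\}$ in $\mathbb{R}^{n+1}$, where $u\perp\overline{H}$; similarly $\mathbb{P}^n\setminus H'$ is identified with $W'$ via $\theta'=\theta_{H'}$. By hypothesis $S\subset\mathbb{P}^n\setminus H$ and $f(S)\subset\mathbb{P}^n\setminus H'$, so it makes sense to speak of $S$ being convex with respect to $H$ (meaning $\theta(S)$ convex in $W$) and $f(S)$ being convex with respect to $H'$ (meaning $\theta'(f(S))$ convex in $W'$). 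The central claim to establish is that the composite map $g:=\theta'\circ f\circ\theta^{-1}$, viewed as a partial map $W\dashrightarrow W'$ defined on $\theta(S)$, sends line segments to line segments; once that is known, the "if and only if" is immediate by symmetry (replacing $f$ by $f^{-1}$, $H$ by $H'$, $H'$ by $H$, and $S$ by $f(S)$).

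So the key step is: \emph{the affine-chart representative $g$ of a projective transformation sends collinear points to collinear points and, more precisely, sends the segment $[\theta(x),\theta(y)]\subset W$ onto the segment $[\theta'(f(x)),\theta'(f(y))]\subset W'$ for all $x,y\in S$.} I would argue this by lifting to $\mathbb{R}^{n+1}$. Let $L=L_f$ be a matrix representing $f$. A point $p\in\mathbb{P}^n\setminus H$ has a unique representative $\widehat p\in W$, namely the intersection of the line $\overline p$ with $W$. For $x,y\in S$, a point $r$ on the projective line $\overline{xy}$ lying in $\mathbb{P}^n\setminus H$ has a representative of the form $\widehat r=(1-t)\,\widehat x+t\,\widehat y$ for a unique $t\in\mathbb{R}$ (this is exactly the statement that $\theta$ is an affine chart), and $r$ lies on the line segment $\overline{xy}_H$ precisely when $t\in[0,1]$, since that segment is the one not meeting $H$ and $H$ corresponds to "$t=\infty$". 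Now $L\widehat r=(1-t)L\widehat x+tL\widehat y$. The vectors $L\widehat x$ and $L\widehat y$ are representatives (not necessarily in $W'$) of $f(x)$ and $f(y)$, which by hypothesis lie in $\mathbb{P}^n\setminus H'$, so $\langle L\widehat x,u'\rangle\neq0$ and $\langle L\widehat y,u'\rangle\neq 0$ where $u'\perp\overline{H'}$; rescale to get representatives $\widehat{f(x)}=L\widehat x/\langle L\widehat x,u'\rangle\in W'$ and similarly $\widehat{f(y)}\in W'$. Then $\widehat{f(r)}$ is obtained by rescaling $L\widehat r$ to lie in $W'$, and a short computation shows $\widehat{f(r)}=(1-s)\widehat{f(x)}+s\widehat{f(y)}$ where $s=s(t)$ is a Möbius (fractional-linear) function of $t$ with no pole in $[0,1]$ — indeed the pole of $s$ corresponds to the point where $\overline{xy}$ meets $f^{-1}(H')$, which by hypothesis misses $S\ni x,y$ and hence (being a hyperplane) misses the segment $\overline{xy}_H$. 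A monotone fractional-linear bijection of $[0,1]$-values maps the interval $\{t\in[0,1]\}$ onto an interval $\{s\in[0,1]\}$, so $g$ carries $[\widehat x,\widehat y]$ onto $[\widehat{f(x)},\widehat{f(y)}]$.

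Granting this, the proposition follows quickly. If $S$ is convex with respect to $H$, then for any $x,y\in S$ the whole segment $\overline{xy}_H$ lies in $S$, hence its image $\overline{f(x)f(y)}_{H'}$ lies in $f(S)$; since this holds for all pairs of points of $f(S)$ (every such pair is $f(x),f(y)$ with $x,y\in S$), $f(S)$ is convex with respect to $H'$ by the characterization noted after Definition~\ref{convhulldef}. The converse is the same argument applied to $f^{-1}$, whose matrix is $L^{-1}$, with the roles of $(H,S)$ and $(H',f(S))$ interchanged.

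The main obstacle I anticipate is bookkeeping the "$t=\infty$ corresponds to $H$" correspondence cleanly — i.e.\ making rigorous the claim that the segment $\overline{xy}_H$ (the one avoiding $H$) is exactly the set of $r$ with affine parameter $t\in[0,1]$, and that the fractional-linear change of parameter $t\mapsto s$ has its pole outside $[0,1]$ because the relevant hyperplane $f^{-1}(H')$ avoids $S$. This is the one place where the hypotheses $S\cap H=\emptyset$ and $f(S)\cap H'=\emptyset$ are genuinely used, and it is worth stating as a small lemma (a projective line minus two points has two components, distinguished by which of two given hyperplanes they avoid) rather than buried in a computation. Everything else is routine linear algebra in $\mathbb{R}^{n+1}$.
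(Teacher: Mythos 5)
Your overall strategy --- reduce to a single segment joining $x,y\in S$, push it through $f$, and use $f(S)\cap H'=\emptyset$ to decide which of the two arcs joining $f(x)$ to $f(y)$ you obtain --- is essentially the paper's, but the justification of your ``key step'' contains a genuine flaw, and that step is false as you state it. You assert, for arbitrary $x,y\in S$, that the pole of the reparametrization $t\mapsto s$ lies outside $[0,1]$ because $f^{-1}(H')$ ``misses $S\ni x,y$ and hence (being a hyperplane) misses the segment $\overline{xy}_{H}$.'' That inference is a \emph{non sequitur}: a hyperplane disjoint from $S$ can perfectly well cross $\overline{xy}_{H}$ when that segment is not contained in $S$. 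For instance, with $n=1$ take $S=\{x,y\}$ two points avoiding $H$, and choose $f$ so that $f^{-1}(H')$ is an interior point of $\overline{xy}_{H}$ while $f(S)$ avoids $H'$; then $g=\theta'\circ f\circ\theta^{-1}$ is not even defined at one point of $[\theta(x),\theta(y)]$, and the image of the rest is two unbounded rays, not the segment $[\theta'(f(x)),\theta'(f(y))]$. So the key step, quantified over all $x,y\in S$ with no convexity hypothesis, cannot be proved from $S\cap H=\emptyset$ and $f(S)\cap H'=\emptyset$ alone; the hypothesis you actually need is that the segment itself avoids $f^{-1}(H')$.

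The repair is simple and turns your argument into the paper's: in the forward direction first invoke convexity of $S$ to get $\overline{xy}_{H}\subseteq S$; only then does $f(S)\cap H'=\emptyset$ (equivalently $S\cap f^{-1}(H')=\emptyset$) guarantee that the segment avoids $f^{-1}(H')$, so your fractional-linear change of parameter has no pole in $[0,1]$ and $f(\overline{xy}_{H})=\overline{f(x)f(y)}_{H'}\subseteq f(S)$; the converse is the same argument for $f^{-1}$. With that reordering your computation is correct, though heavier than necessary: since $f$ maps the line $\overline{xy}$ bijectively onto the line through $f(x),f(y)$, the set $f(\overline{xy}_{H})$ is automatically one of the two closed segments with endpoints $f(x),f(y)$, and since it lies in $f(S)$ it avoids $H'$, hence equals $\overline{f(x)f(y)}_{H'}$ --- no affine-chart parametrization or M\"obius bookkeeping is needed. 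That shorter identification is exactly how the paper's proof concludes.
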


\begin{proof}
Assume that $S$ is convex with respect to $H$. To show that $f(S)$ is convex
with respect to $H^{\prime}$ it is sufficient to show, given any two points
$x^{\prime},y^{\prime}\in f(S)$, that $\overline{x^{\prime}y^{\prime}%
}_{H^{\prime}}\subseteq f(S)$. If $x=f^{-1}(x^{\prime})$ and $y=f^{-1}%
(y^{\prime})$, then by the convexity of $S$ and the fact that $S\cap
H=\emptyset$, we know that $\overline{xy}_{H}\subseteq S$. Hence
$f(\overline{xy}_{H})\subseteq f(S)$. Since $f(S)\cap H^{\prime}=\emptyset$,
and $f$ takes lines to lines, $\overline{x^{\prime}y^{\prime}}_{H^{\prime}%
}=f(\overline{xy}_{H})\subseteq f(S)$.

The converse follows since $f^{-1}$ is a projective transformation.
\end{proof}

Proposition~\ref{lemmaB} states that $conv_{H}(S)$ behaves well under
projective transformation.

\begin{proposition}
\label{lemmaB} Let $S\subset\mathbb{P}^{n}$ and let $H$ be a hyperplane such
that $S\cap H=\emptyset$. If $f:\mathbb{P}^{n}\rightarrow\mathbb{P}^{n}$ is a
projective transformation, then
\[
conv_{f(H)}f(S)=f(conv_{H}(S))\text{.}%
\]

\end{proposition}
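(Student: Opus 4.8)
The plan is to reduce the statement to the corresponding fact about ordinary (affine) convex hulls, using the affine chart picture that was set up just before Definition \ref{convbdy}. Recall that for a hyperplane $H$ we have the chart map $\theta_H : \mathbb{P}^n \setminus H \to W_H$ onto an affine subspace $W_H$ of $\mathbb{R}^{n+1}$, under which convex-with-respect-to-$H$ sets correspond exactly to affinely convex subsets of $W_H$, and $conv_H(S) = \phi(conv(\theta_H(S)))$. So the claim $conv_{f(H)} f(S) = f(conv_H(S))$ is equivalent, after transporting through the two charts, to a statement about a single map $W_H \to W_{f(H)}$, namely that this map carries ordinary convex hulls to ordinary convex hulls.

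The key point is to identify the induced map on charts. First I would fix a matrix $L_f$ representing $f$. The map $f$ sends $\mathbb{P}^n \setminus H$ to $\mathbb{P}^n \setminus f(H)$, so we get a well-defined map $g = \theta_{f(H)} \circ f \circ \theta_H^{-1} : W_H \to W_{f(H)}$. I claim $g$ is the restriction of an affine isomorphism between the affine spaces $W_H$ and $W_{f(H)}$. Indeed, $L_f$ is linear, $\theta_H^{-1}$ and $\theta_{f(H)}$ are each the restriction of a linear rescaling (a point $x \in W_H$ maps to the line $\mathbb{R}x$, then to the unique scalar multiple lying in $W_{f(H)}$), and the composition of these operations, restricted to the affine hyperplane $W_H$ sitting in $\mathbb{R}^{n+1}$, is exactly a projective chart change, which is classically affine on $\mathbb{P}^n \setminus H$. (Concretely: $g$ is projectivization of $L_f$ read in the two affine charts, and such chart changes are affine precisely because the "denominator" linear functional defining $H$ is nonvanishing on the relevant region, i.e. on $W_H$; it does not vanish because $f(S)\cap f(H)=\emptyset$ means we never divide by zero.) Alternatively, one can invoke Proposition \ref{lemmaA}, which already tells us $g$ maps convex sets to convex sets and, applied to $f^{-1}$, that $g^{-1}$ does too — and a bijection of affine spaces preserving convexity in both directions is affine, hence preserves convex hulls.

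Granting that $g$ is an affine isomorphism, the rest is the elementary fact that an affine map commutes with the convex-hull operation: $conv(g(T)) = g(conv(T))$ for any $T \subset W_H$. Applying this with $T = \theta_H(S)$ and using $\theta_{f(H)}(f(S)) = g(\theta_H(S))$, we get
\[
conv_{f(H)} f(S) = \phi\big(conv(\theta_{f(H)} f(S))\big) = \phi\big(conv(g(\theta_H S))\big) = \phi\big(g(conv(\theta_H S))\big).
\]
On the other hand $f(conv_H(S)) = f(\phi(conv(\theta_H S)))$, and $f \circ \phi = \phi \circ L_f$ together with the definition of $g$ gives $f(\phi(z)) = \phi(g(z))$ for $z \in W_H$; in particular for $z \in conv(\theta_H S) \subset W_H$. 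Hence the two sides agree.

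The main obstacle is the verification that the chart-change map $g$ is genuinely affine (equivalently, that $f$ maps $\mathbb{P}^n\setminus H$ to $\mathbb{P}^n\setminus f(H)$ affinely in the chosen coordinates) rather than merely projective; everything else is bookkeeping with the definitions of $\theta$, $\phi$, and $conv_H$. This affineness is exactly the content one needs to extract carefully, and the cleanest route is probably to deduce it from Proposition \ref{lemmaA} applied to both $f$ and $f^{-1}$, avoiding an explicit matrix computation with the defining functionals of $H$ and $f(H)$. One should also note at the outset that $conv_H(S)$ is well-defined here since $S \cap H = \emptyset$, and that $conv_{f(H)} f(S)$ is well-defined since $f(S) \cap f(H) = f(S \cap H) = \emptyset$.
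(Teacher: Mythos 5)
Your overall reduction to the affine charts $W_H$ and $W_{f(H)}$ is sound, and the identity you want is true; the weak point is exactly where you locate it, namely the claim that the chart-transition map $g=\theta_{f(H)}\circ f\circ\theta_H^{-1}$ is affine, and neither of your two justifications actually establishes it. The parenthetical reason (``the denominator is nonvanishing, so we never divide by zero'') only gives that $g$ is a well-defined fractional-linear map; nonvanishing of the denominator is what makes a chart change defined, while affineness requires the denominator to be \emph{constant} on $W_H$. That constancy does hold here, but for a specific reason you never use: if $u$ is the unit normal defining $W_H$ and $v$ the unit normal defining $W_{f(H)}$, then $\langle L_f x,v\rangle=\langle x,L_f^t v\rangle$ and $L_f^t v$ is orthogonal to $\overline{H}$, hence a scalar multiple $c\,u$; so on $W_H$ the denominator equals $c$ and $g$ is the restriction of the linear map $\frac{1}{c}L_f$. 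Without this computation (or something equivalent), the step ``$g$ is affine'' is unsupported. Your fallback argument is also defective as stated: a bijection of affine spaces preserving convexity in both directions need not be affine when the dimension is one (any increasing bijection of $\mathbb{R}$ qualifies), and in higher dimensions the assertion is essentially the fundamental theorem of affine geometry, which you cannot invoke as a triviality.

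The good news is that the fallback can be repaired trivially, and the repair is precisely the paper's proof. You do not need $g$ to be affine at all: if $g$ and $g^{-1}$ both carry convex sets to convex sets (which is what Proposition \ref{lemmaA} gives, applied to $f$ and to $f^{-1}$), then $g(conv(T))=conv(g(T))$ follows from a two-line double inclusion --- $g(conv(T))$ is convex and contains $g(T)$, so it contains $conv(g(T))$; and $g^{-1}(conv(g(T)))$ is convex and contains $T$, so it contains $conv(T)$. The paper runs this minimality argument directly in $\mathbb{P}^n$ rather than in charts: $f(conv_H(S))$ is convex with respect to $f(H)$ by Proposition \ref{lemmaA} and contains $f(S)$, and it is the smallest such set, since a strictly smaller convex set containing $f(S)$ would, after applying $f^{-1}$ and Proposition \ref{lemmaA} again, contradict the minimality of $conv_H(S)$. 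So either fix the affineness claim with the explicit computation above, or drop it and use the double inclusion, in which case your argument coincides with the paper's modulo the (unnecessary) passage to charts.
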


\begin{proof}
Since $S\subseteq conv_{H}(S)$, we know that $f(S)\subseteq f(conv_{H}(S))$.
Moreover, by Proposition \ref{lemmaA}, we know that $f(conv_{H}(S))$ is convex
with respect to $f(H)$. To show that $conv_{f(H)}f(S)=f(conv_{H}(S))$ it is
sufficient to show that $f(conv_{H}(S))$ is the smallest convex subset
containing $f(S)$, i.e., there is no set $C$ such that $C$ is convex with
respect to $f(H)$ and $f(S)\subseteq C\subsetneq f(conv_{H}(S))$. However, if
such a set exists, then by applying the inverse $f^{-1}$ to the above
inclusion, we have $S\subseteq f^{-1}(C)\subsetneq conv_{H}(S)$. Since
$f^{-1}(C)$ is convex by Proposition \ref{lemmaA}, we arrive at a
contradiction to the fact that $conv_{H}(S)$ is the smallest convex set
containing $S$.
\end{proof}

In general, $conv_{H}(S)$\textit{ }depends on the avoided\textit{
}hyperplane\textit{ }$H$\textit{.} But, as Proposition \ref{lemmaC} shows, it
is independent of the avoided hyperplane when $S$ is connected.

\begin{proposition}
\label{lemmaC} If $S\subset\mathbb{P}^{n}$ is a connected set such that $S\cap
H=S\cap H^{\prime}=\emptyset$ for hyperplanes $H,H^{\prime}$ of $\mathbb{P}%
^{n}$, then
\[
conv_{H}(S)=conv_{H^{\prime}}(S).
\]

\end{proposition}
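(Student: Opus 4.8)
The plan is to reduce the statement to a claim about finitely many points, using the fact that the convex hull of a set is the union of the convex hulls of its finite subsets, together with the connectedness hypothesis to move between the two avoided hyperplanes. First I would observe that it suffices to prove $conv_H(S)\subseteq conv_{H'}(S)$, since the roles of $H$ and $H'$ are symmetric. Because $conv_H(S)=\bigcup conv_H(F)$ as $F$ ranges over finite subsets of $S$ (the ordinary convex hull in the affine space $\mathbb{P}^n\setminus H$ has this property), and similarly for $H'$, it is enough to show that for every finite $F\subseteq S$ we have $conv_H(F)\subseteq conv_{H'}(S)$. In fact I expect the right intermediate claim to be: if $F=\{x_1,\dots,x_k\}\subseteq S$ and $S$ is connected and avoids both $H$ and $H'$, then $conv_H(F)=conv_{H'}(F')$ for some slightly larger finite set $F'\subseteq S$ — or more simply, $conv_H(F)\subseteq conv_{H'}(S)$.

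The heart of the argument is the case $k=2$: given $x,y\in S$, show that the line segment $\overline{xy}_H$ with respect to $H$ is contained in $conv_{H'}(S)$. The two complementary segments of the line $\overline{xy}$ determined by $x$ and $y$ are $\overline{xy}_H$ and $\overline{xy}_{H'}$; if these coincide there is nothing to prove, so assume $H$ and $H'$ separate the two arcs, i.e. $\overline{xy}_H\cup\overline{xy}_{H'}$ is the whole projective line $\overline{xy}$, with $H$ meeting the interior of $\overline{xy}_{H'}$ and $H'$ meeting the interior of $\overline{xy}_H$. Here I would use connectedness of $S$: since $S$ is connected, avoids $H'$, and contains $x$ and $y$, and since removing $H'$ disconnects the projective line $\overline{xy}$ into exactly the open arc corresponding to $\overline{xy}_{H'}$ minus its endpoints (one affine chart), the points $x,y$ must lie in the closure of that single arc — but that forces $\overline{xy}_{H'}$ to be the segment \emph{not} meeting $H'$, contradiction unless one checks carefully which arc $S$ forces. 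The cleaner route: pick a path $\gamma$ in $S$ from $x$ to $y$; then $\gamma\cup\overline{xy}_{H}$ is a connected subset of $\mathbb{P}^n\setminus H$ containing a loop, and one shows that $\overline{xy}_H\subseteq conv_{H'}(\gamma)\subseteq conv_{H'}(S)$ by noting that $conv_{H'}(\gamma)$, being convex with respect to $H'$ and containing the connected set $\gamma\ni x,y$, must contain one of the two arcs between $x$ and $y$; and it must be the arc $\overline{xy}_H$, because the other arc together with $\gamma$ would enclose a region forcing $conv_{H'}(\gamma)$ to wrap around and hit $H'$.

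From the $k=2$ case the general finite case follows by induction on $k$: write $conv_H(\{x_1,\dots,x_k\})$ as the union over $z\in conv_H(\{x_1,\dots,x_{k-1}\})$ of the segments $\overline{x_k z}_H$; by induction the inner hull lies in $conv_{H'}(S)$, which is connected (indeed convex) and avoids $H$ and $H'$, so applying the $k=2$ reasoning to the connected set $S\cup conv_{H'}(S)$ — or directly to $conv_{H'}(S)$, which contains $x_k$ and $z$ — shows each such segment lies in $conv_{H'}(S)$. Letting $F$ exhaust $S$ gives $conv_H(S)\subseteq conv_{H'}(S)$, and symmetry finishes the proof.

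The main obstacle will be the two-point case — specifically, pinning down \emph{which} of the two arcs between $x$ and $y$ is forced to lie in $conv_{H'}(\gamma)$ and making the "wrapping around and hitting $H'$" intuition rigorous. The natural tool is to pass to an affine chart $\mathbb{P}^n\setminus H'\cong W'$: there $conv_{H'}(\gamma)$ is an ordinary affine convex hull of the compact connected set $\theta'(\gamma)$, hence it contains the ordinary segment $[\theta'(x),\theta'(y)]$, and one then checks that $\theta'$ carries $\overline{xy}_H$ (not the complementary arc, which leaves the chart by crossing $H'$) onto exactly that segment. I expect this verification — that the bounded affine segment corresponds to the arc avoiding $H'$, i.e. $\overline{xy}_H=\overline{xy}_{H'}$ is impossible only in the degenerate separated case handled above — to be the one genuinely delicate point; everything else is bookkeeping with the characterization "$C$ is convex with respect to $H$ iff $\overline{xy}_H\subseteq C$ for all $x,y\in C$" and with Definition \ref{convhulldef}.
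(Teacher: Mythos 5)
There is a genuine gap, and it is exactly the step you yourself flag as delicate: for $x,y\in S$ you need $\overline{xy}_H=\overline{xy}_{H'}$ (equivalently, that $conv_H(S)$ is disjoint from $H'$), and none of the arguments you sketch for this actually works. (a) Removing $H'$ from the projective line $\overline{xy}$ deletes a single point, and a circle minus a point is connected, so the claim that this ``disconnects'' the line is false; moreover $S\cap\overline{xy}$ need not be connected, so the connectedness hypothesis cannot be localized to the line at all. (b) The ``cleaner route'' picks a path $\gamma$ in $S$ from $x$ to $y$, but $S$ is only assumed connected, not path-connected, so no such $\gamma$ need exist. (c) The assertion that the other arc together with $\gamma$ ``would enclose a region forcing $conv_{H'}(\gamma)$ to wrap around and hit $H'$'' is a planar picture: in $\mathbb{P}^n$ with $n\ge2$ a closed curve encloses nothing, and this is precisely the statement to be proved, not a reason for it. Your induction on $k$ also quietly uses that $conv_{H'}(S)$ avoids $H$, which is the same unproved fact in symmetric form; so the whole proposal rests on the one claim that is never established.

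The missing idea is short. Work in the affine chart $\mathbb{P}^n\setminus H$ (assume $H\neq H'$, else there is nothing to prove). There $H'\cap(\mathbb{P}^n\setminus H)$ is an affine hyperplane, whose complement in the chart has exactly two connected components, each an open half-space and in particular convex. The connected set $S$ is disjoint from $H'$, hence lies in one of these half-spaces, and therefore so does $conv_H(S)$; in particular $conv_H(S)\cap H'=\emptyset$, and for $x,y\in S$ the affine segment $\overline{xy}_H$ avoids $H'$ and thus equals $\overline{xy}_{H'}$. Once this is in hand your finite-subset induction is unnecessary: as in the paper's proof, both $conv_H(S)$ and $conv_{H'}(S)$ are then the ordinary convex hull of $S$ computed inside the common region $(\mathbb{P}^n\setminus H)\setminus H'$, so they coincide. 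If you prefer to keep your two-point formulation, the same half-space observation (applied in each chart) shows directly that $conv_{H'}(S)$ avoids $H$, is convex with respect to $H$, and contains $S$, giving $conv_H(S)\subseteq conv_{H'}(S)$, with the reverse inclusion by symmetry.
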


\begin{proof}
The fact that $S$ is connected and $S\cap H^{\prime}=\emptyset$, implies that
$conv_{H}(S)\cap H^{\prime}=\emptyset.$ Therefore $conv_{H}(S)$ is the
ordinary convex hull of $S$ in $({\mathbb{P}}^{n}\setminus H)\backslash
H^{\prime},$ which is an affine $n$-dimensional space with a hyperplane
deleted. Likewise $conv_{H^{\prime}}(S)$ is the ordinary convex hull of $S$ in
$(\mathbb{P}^{n}\setminus H^{\prime})\backslash H=(\mathbb{P}^{n}\setminus
H)\backslash H^{\prime}$. Therefore $conv_{H}(S)=conv_{H^{\prime}}(S)$.
\end{proof}

The key result, that will be needed, for example in Section \ref{2implies3sec}%
, is the following.

\begin{proposition}
\label{lemmaD} Let $S\subset\mathbb{P}^{n}$ be a connected set and let $H$ be
a hyperplane. If $S\cap H=\emptyset$ and $f:\mathbb{P}^{n}\rightarrow
\mathbb{P}^{n}$ is a projective transformation such that $f(S)\cap
H=\emptyset,$ then
\[
conv_{H}f(S)=f(conv_{H}(S))\text{.}%
\]

\end{proposition}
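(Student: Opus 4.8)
The plan is to combine Proposition~\ref{lemmaB} with Proposition~\ref{lemmaC}. By Proposition~\ref{lemmaB} applied to the projective transformation $f$, we have $conv_{f(H)}f(S) = f(conv_H(S))$. So it suffices to show that $conv_{f(H)}f(S) = conv_H f(S)$, i.e.\ that the convex hull of $f(S)$ is the same whether we take it with respect to the hyperplane $f(H)$ or with respect to $H$. This is precisely the situation addressed by Proposition~\ref{lemmaC}, provided we can verify its hypotheses: namely that $f(S)$ is connected, and that $f(S)$ avoids both $f(H)$ and $H$.

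First I would check that $f(S)$ is connected: this is immediate since $S$ is connected and $f$ is a homeomorphism of $\mathbb{P}^n$. Next, $f(S)\cap H = \emptyset$ is given directly in the hypothesis of the proposition. Finally, $f(S)\cap f(H) = f(S\cap H) = f(\emptyset) = \emptyset$ since $S\cap H = \emptyset$ and $f$ is a bijection. Thus $f(S)$ is a connected set avoiding both hyperplanes $f(H)$ and $H$, so Proposition~\ref{lemmaC} gives $conv_{f(H)}f(S) = conv_H f(S)$.

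Chaining the two equalities yields $conv_H f(S) = conv_{f(H)}f(S) = f(conv_H(S))$, which is the desired conclusion. I do not expect any serious obstacle here; the proposition is essentially a corollary of the two preceding ones, and the only thing to be careful about is bookkeeping the avoided hyperplanes correctly (it is $f(H)$, not $H$, that naturally appears when pushing the convex hull through $f$ via Proposition~\ref{lemmaB}, and the role of Proposition~\ref{lemmaC} is exactly to absorb that discrepancy using connectedness of $f(S)$). One should also note that Proposition~\ref{lemmaC} implicitly needs $S$ to be nonempty for the convex hulls to be well-behaved, but if $S = \emptyset$ the statement is trivial.
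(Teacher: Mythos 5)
Your argument is correct and is exactly the route the paper takes: Proposition~\ref{lemmaB} gives $conv_{f(H)}f(S)=f(conv_{H}(S))$, and Proposition~\ref{lemmaC} applied to the connected set $f(S)$, which avoids both $H$ and $f(H)$, removes the dependence on the avoided hyperplane. The paper merely states that the result follows at once from these two propositions; your write-up supplies the same chain with the hypotheses checked explicitly.
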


\begin{proof}
This follows at once from Propositions \ref{lemmaB} and \ref{lemmaC}.
\end{proof}

\section{\label{2implies3sec}Proof that (2)$\Rightarrow$(3) in Theorem
\ref{main}}

The implication (2)$\Rightarrow$(3) in Theorem \ref{main} is proved in two
steps. We show that (2)$\Rightarrow$(2.5)$\Rightarrow$(3) where (2.5) is the
following statement.\smallskip

(2.5) \textit{There is a hyperplane }$H$\textit{ and nonempty finite
collection of nonempty disjoint connected open sets }$\left\{  O_{i}\right\}
$\textit{ such that }$\mathcal{F}(\cup_{i}\overline{O}_{i})\subset\cup
_{i}O_{i}$\textit{ and }$\cup_{i}\overline{O}_{i}\cap H=\emptyset.$\smallskip

\begin{lemma}
\label{(2)implies(3)A}\textbf{[(2)$\Rightarrow$(2.5)]} If there is a nonempty
open set $U$ and a hyperplane $H$ with $\overline{U}\cap H=\emptyset$ such
that $\mathcal{F}(\overline{U})\subset U$, then there is a nonempty finite
collection of nonempty disjoint connected open sets $\left\{  O_{i}\right\}  $
such that $\mathcal{F}(\cup_{i}\overline{O}_{i})\subset\cup_{i}O_{i}$ and
$\cup_{i}\overline{O}_{i}\cap H=\emptyset$.
\end{lemma}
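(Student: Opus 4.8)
The plan is to start from the open set $U$ with $\mathcal{F}(\overline U)\subset U$ and $\overline U\cap H=\emptyset$, and to replace $U$ by a well-chosen finite union of its connected components. First I would observe that since $\overline U$ is compact and disjoint from $H$, and since $\mathcal{F}$ is continuous, the inclusion $\mathcal{F}(\overline U)\subset U$ is ``open'': $\mathcal{F}(\overline U)$ is a compact subset of the open set $U$. Now consider the connected components of $U$. These form an open cover of the compact set $\mathcal{F}(\overline U)$ (components of an open subset of a manifold are open), so finitely many of them, say $O_1,\dots,O_N$, already cover $\mathcal{F}(\overline U)$. The natural candidate is $U':=\bigcup_{i=1}^N O_i$. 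These sets are connected, open, pairwise disjoint (distinct components), and $\overline{U'}\cap H=\emptyset$ since $\overline{U'}\subset\overline U$.

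The main point to check is that $\mathcal{F}(\overline{U'})\subset U'$. The subtlety is that passing to a subcollection of components can destroy forward-invariance: a priori $\mathcal{F}(\overline{O_i})$ could land in some component of $U$ not among $O_1,\dots,O_N$. The key step is therefore an \emph{iteration/stabilization argument}. Define a sequence of finite subcollections: let $\mathcal{C}_0=\{O_1,\dots,O_N\}$ be a set of components covering $\mathcal{F}(\overline U)$, and having chosen $\mathcal{C}_k$ with union $U_k$, let $\mathcal{C}_{k+1}$ consist of all components of $U$ that meet $\mathcal{F}(\overline{U_k})$. Each $\mathcal{C}_k$ is finite (compactness of $\mathcal{F}(\overline{U_k})$), and since $\mathcal{F}(\overline{U_k})\subset\mathcal{F}(\overline U)\subset U_0$, one checks by induction that $U_{k+1}\subseteq U_k$ (more precisely that the collections are nested downward, or that their unions stabilize), because $\mathcal{F}(\overline{U_k})$ shrinks. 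A cleaner variant: observe that $\mathcal{F}(\overline U)$ has only finitely many components of $U$ meeting it, and that the union $U_\infty$ of all components $O$ of $U$ with $O\subseteq$ (the basin-like forward orbit) is forward invariant; then extract a finite piece. I would formalize this as: the relation ``component $O$ of $U$ is hit by $\mathcal{F}(\overline{O'})$ for component $O'$'' makes the components into a directed graph with finite out-degree on the finitely many components meeting $\mathcal{F}(\overline U)$; take the finite set $\mathcal S$ of components reachable from $\mathcal{C}_0$, and show $\mathcal S$ is finite and $U':=\bigcup_{O\in\mathcal S}O$ satisfies $\mathcal{F}(\overline{U'})\subset U'$ and $\overline{U'}\subset\overline U$, hence $\overline{U'}\cap H=\emptyset$.

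The hard part will be establishing that the set $\mathcal S$ of ``reachable'' components is finite --- equivalently that the forward iteration stabilizes after finitely many steps. The mechanism is that $\overline{U'}$, being a closed subset of $\overline U$, is compact and disjoint from $H$, and $\mathcal{F}$ maps it into $U$; so at each stage only finitely many new components can appear, but one must rule out an infinite strictly growing chain. I expect this follows from a uniform bound: since $\mathcal{F}(\overline U)\subset U$ with $\overline U$ compact, there is a single compact set $\overline V$ with $\mathcal{F}(\overline U)\subset V\subset\overline V\subset U$, and every forward image $\mathcal{F}^k(\overline U)$ lies in $\overline V$; hence all reachable components are among the finitely many components of $U$ that meet the fixed compact set $\overline V$. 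That bounds $\mathcal S$ by a fixed finite number and closes the argument. Finally, relabel the chosen components as $\{O_i\}$; they are nonempty, disjoint, connected, open, with $\mathcal{F}(\bigcup_i\overline{O_i})=\bigcup_i\mathcal{F}(\overline{O_i})\subset\bigcup_i O_i$ and $\bigcup_i\overline{O_i}\cap H=\emptyset$, which is exactly (2.5).
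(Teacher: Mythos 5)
Your proposal is correct, and its core is the same as the paper's: keep only the finitely many connected components of $U$ that meet a suitable compact set, finiteness coming from compactness of that set together with openness and disjointness of the components, and then check forward invariance of the resulting finite union. The difference is which compact set does the tagging. The paper uses $\tilde{A}=\cap_{k}\mathcal{F}^{k}(\overline{U})$ and keeps the components meeting $\tilde{A}$; the invariance check then needs $\mathcal{F}(\tilde{A})\subset\tilde{A}$ together with the observation that each $f(\overline{O_{i}})$ is connected, hence lies in a single component of $U$, which meets $\tilde{A}$. You tag by $\mathcal{F}(\overline{U})$ itself, which makes invariance immediate: for $x\in\overline{U'}\subset\overline{U}$ one has $f(x)\in\mathcal{F}(\overline{U})\subset U$, so the component of $U$ containing $f(x)$ meets $\mathcal{F}(\overline{U})$ and is among those you kept. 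For the same reason the ``hard part'' you anticipate is vacuous: since $\mathcal{F}(\overline{O'})\subseteq\mathcal{F}(\overline{U})$ for every component $O'$ of $U$, any component reachable in your graph already meets $\mathcal{F}(\overline{U})$, so $\mathcal{S}=\mathcal{C}_{0}$, the naive candidate $U'=O_{1}\cup\cdots\cup O_{N}$ is forward invariant as it stands, and the stabilization/reachability machinery and the auxiliary set $\overline{V}$ can simply be deleted. Either tagging set proves the lemma; the paper's choice of $\tilde{A}$ is slightly less economical here but is the set that gets reused later (e.g.\ in Proposition \ref{open}), while your choice gives the shortest self-contained argument.
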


\begin{proof}
Let $U=\cup_{\alpha}U_{\alpha}$, where the $U_{\alpha}$ are the connected
components of $U$. Let $\tilde{A}=\cap_{k}\mathcal{F}^{k}(\overline{U})$ and
let $\left\{  O_{i}\right\}  $ be the set of $U_{\alpha}$ that have nonempty
intersection with $\tilde{A}$. This set is finite because the sets in
$\left\{  O_{i}\right\}  $ are pairwise disjoint and $\tilde{A}$ is compact.
Since $\mathcal{F}(\tilde{A})\subset\tilde{A}$ and $\mathcal{F}(\overline
{U})\subset U$, we find that $\mathcal{F}(\cup\overline{O}_{i})\subset\cup
_{i}O_{i}$. Since $\cup_{i}\overline{O}_{i}\subset\overline{U}$ and
$\overline{U}\cap H=\emptyset$, we have $\cup_{i}\overline{O}_{i}\cap
H=\emptyset$.
\end{proof}

\begin{lemma}
\label{(2)implies(3)B}[\textbf{(2.5)}$\Rightarrow$\textbf{(3)}]: If there is a
nonempty finite collection of nonempty disjoint connected open sets $\left\{
O_{i}\right\}  $ and a hyperplane $H$ such that $\mathcal{F}(\cup_{i}%
\overline{O}_{i})\subset\cup_{i}O_{i}$ and $\cup_{i}\overline{O}_{i}\cap
H=\emptyset,$ then there is a nonempty finite collection of disjoint convex
bodies $\left\{  C_{i}\right\}  $ such that $\mathcal{F}(\cup_{i}C_{i})\subset
int(\cup_{i}C_{i})$.
\end{lemma}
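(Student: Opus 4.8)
The plan is to replace each connected open set $O_i$ by a convex body built from it, using the convex hull with respect to the common avoided hyperplane $H$. First I would set $K_i = conv_H(\overline{O}_i)$ for each $i$. Since each $\overline{O}_i$ is a connected compact set avoiding $H$, each $K_i$ is a compact convex set in the affine space $\mathbb{P}^n\setminus H$; moreover $\cup_i K_i \cap H = \emptyset$ and $\overline{O}_i \subset K_i$. The heart of the argument is to show that the hull operation is compatible with $\mathcal{F}$: by Proposition~\ref{lemmaD} (which applies precisely because each $\overline{O}_i$ is connected and both $\overline{O}_i$ and $f(\overline{O}_i)$ avoid $H$), we have $f(K_i) = f(conv_H(\overline{O}_i)) = conv_H(f(\overline{O}_i))$ for each $f\in\mathcal{F}$. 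Now $f(\overline{O}_i) \subset \cup_j O_j$, and since $f(\overline{O}_i)$ is connected it lies in a single component, say $O_{j(f,i)}$; hence $f(\overline{O}_i) \subset O_{j} \subset K_j$, and convexity of $K_j$ gives $conv_H(f(\overline{O}_i)) \subset K_j$. Therefore $\mathcal{F}(\cup_i K_i) \subset \cup_i K_i$.

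Two issues remain: the $K_i$ need not be disjoint, and the inclusion above is into $\cup_i K_i$, not into its interior. I would handle both by a standard thickening/merging procedure. For disjointness: if two hulls $K_i$, $K_j$ intersect, one cannot simply merge them (their union may fail to be convex), so instead I would merge an entire "cluster" of mutually intersecting hulls by replacing it with the convex hull (with respect to $H$) of the union of the clusters' generating sets $\overline{O}_i$; this again works via Proposition~\ref{lemmaD} applied to the connected set obtained from the union, provided the union is connected — which it is, because the sets in a cluster chain together through nonempty intersections of their hulls, and one must check that the relevant unions of the $\overline O_i$ (or of chains of the $K_i$) are connected. After finitely many such merges we obtain a disjoint finite family of compact convex sets $\{C_i'\}$ still satisfying $\mathcal{F}(\cup_i C_i')\subset\cup_i C_i'$ and still avoiding $H$. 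For the interior condition: since $\mathcal{F}(\cup_i C_i')$ is compact and contained in $\cup_i C_i'$, and since each $C_i'$ has nonempty interior (being the convex hull of a set with nonempty interior), I would slightly enlarge each $C_i'$ within $\mathbb{P}^n\setminus H$ — e.g. take a closed $\varepsilon$-neighborhood in the Hilbert-type/affine metric, then reconvexify — to get convex bodies $C_i \supset C_i'$ with $\mathcal{F}(\cup_i C_i) \subset int(\cup_i C_i)$; for $\varepsilon$ small the $C_i$ remain pairwise disjoint and remain disjoint from $H$, using compactness of $\cup_i C_i'$ and of $H$.

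I expect the main obstacle to be the merging step: verifying that when a cluster of hulls $K_i$ is collapsed, the generating set whose convex hull one takes is genuinely connected (so that Proposition~\ref{lemmaD} is applicable and the merged body is $\mathcal{F}$-compatible in the same way). One clean way to arrange this is to merge greedily — whenever $K_i \cap K_j \neq \emptyset$, note that $K_i \cup K_j$ is connected, so $conv_H(K_i\cup K_j)$ is again a well-behaved convex body and, by Proposition~\ref{lemmaD}, satisfies $f(conv_H(K_i\cup K_j)) = conv_H(f(K_i)\cup f(K_j))$, which is contained in the (possibly further-merged) family; iterate until all remaining bodies are disjoint, which terminates since the number of bodies strictly decreases at each merge. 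Once disjointness and $\mathcal{F}$-invariance are secured, the passage to interiors is routine, as sketched above, completing the proof.
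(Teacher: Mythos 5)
Your construction and merging procedure coincide with the paper's proof: set $K_i=conv_H(\overline{O}_i)$, push the hull through each $f$ via Proposition \ref{lemmaD}, and collapse intersecting hulls using the transitive closure of the intersection relation, iterating finitely often; your observation that the union of a chain of intersecting hulls is connected, so that Proposition \ref{lemmaD} still applies to the merged generating sets, is exactly the point the paper relies on. The genuine gap is in your treatment of the interior condition. At the first step you record only $f(\overline{O}_i)\subset O_j\subset K_j$, hence $\mathcal{F}(\cup_i K_i)\subset\cup_i K_i$, discarding the strictness of the hypothesis, and you plan to restore $\mathcal{F}(\cup_i C_i)\subset int(\cup_i C_i)$ at the very end by taking small closed $\varepsilon$-neighborhoods of the disjoint merged bodies $C_i'$. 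That final step does not follow from what you have carried forward: from $\mathcal{F}(\cup_i C_i')\subset\cup_i C_i'$ alone one cannot conclude that slightly enlarged bodies $C_i\supset C_i'$ satisfy $\mathcal{F}(\cup_i C_i)\subset int(\cup_i C_i)$ (the identity map on an invariant set satisfies the former and never the latter), and quantitatively a point $x$ in the collar $C_i\setminus C_i'$ only satisfies $d(f(x),\cup_j C_j')\leq L\varepsilon$ where $L$ is a Lipschitz bound for $f$ on the relevant compact set, which may exceed $1$; so $f(x)$ need not lie in the $\varepsilon$-enlargement at all, let alone its interior. The thickening controls where $\cup_i C_i'$ goes, not where the collar goes, and projective maps are not nonexpansive.

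The repair is cheap and is what the paper does: since $f(\overline{O}_i)\subset O_{f(i)}$ and the convex hull with respect to $H$ of an open subset of the affine chart $\mathbb{P}^n\setminus H$ is open, Proposition \ref{lemmaD} gives directly
\[
f(K_i)=conv_H(f(\overline{O}_i))\subset conv_H(O_{f(i)})\subset int\bigl(K_{f(i)}\bigr),
\]
so the strict (interior) containment is available from the start. It then survives each merge by the same device: by Proposition \ref{lemmaD} the image of a merged body is the hull of a connected set, which (being connected and contained in the disjoint finite union of the new compact bodies) lies inside the union of the interiors of the bodies of a single equivalence class, and the hull of a union of open sets is again open. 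Tracking the interior in this way eliminates any need for metric thickening, and the remainder of your argument then goes through as written.
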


\begin{proof}
Assume that there is a nonempty finite collection of nonempty disjoint
connected open sets $\left\{  O_{i}\right\}  $ such that $\mathcal{F}(\cup
_{i}\overline{O}_{i})\subset\cup_{i}O_{i}$ and $\cup_{i}\overline{O}_{i}$
avoids a hyperplane. Let $O=\cup_{i}O_{i}$. Since $\mathcal{F}(\overline
{O})\subset O$, it must be the case that, for each $f\in\mathcal{F}$ and each
$i$, there is an index that we denote by $f(i)$, such that $f(\overline{O_{i}%
})\subset O_{f(i)}$. Since $\overline{O_{i}}$ is connected and both
$\overline{O_{i}}$ and $f(\overline{O_{i}})$ avoid the hyperplane $H$ it
follows from Proposition \ref{lemmaD} that%
\[
f(conv_{H}(\overline{O_{i}}))=conv_{H}(f(\overline{O_{i}}))\subset
conv_{H}(O_{f(i)})\subset int(conv_{H}(\overline{O_{f(i)}})).
\]
For each $i$, let $C_{i}=conv_{H}(\overline{O}_{i})$, so that each $C_{i}$ is
a convex body. Then we have%
\[
f(C_{i})\subset int(C_{f(i)}).
\]

However, it may occur, for some $i\neq j$, that $C_{i}\cap C_{j}\neq\emptyset
$. In this case $C_{i}\cup C_{j}$ is a connected set that avoids the
hyperplane $H$, and is such that $f(C_{i}\cup C_{j})$ also avoids $H.$ It
follows again by Proposition \ref{lemmaD} that
\[
conv_{H}(f(C_{i}\cup C_{j}))=f(conv_{H}(C_{i}\cup C_{j}))\subset
int(conv(C_{f(i)}\cup C_{f(j)}).
\]

Define $C_{i}$ and $C_{j}$ to be related if $C_{i}\cap C_{j}\neq\emptyset$,
and let $\sim$ denote the transitive closure of this relation. (That is, if
$C_{i}$ is related to $C_{j}$ and $C_{j}$ is related to $C_{k}$, then $C_{i}$
is related to $C_{k}$.) From the set $\{C_{i}\}$ define a new set $U^{\prime}$
whose elements are
\[
U^{\prime}=\left\{  conv\left(  \bigcup_{C\in Z}C\right)
\,:\,Z\;\mbox{is an equivalence class with respect to $\sim$}\right\}  .
\]
By abuse of language, let $\{C_{i}\}$ be the set of convex sets in $U^{\prime
}$. It may again occur, for some $i\neq j$, that $C_{i}\cap C_{j}\neq
\emptyset$. In this case we repeat the equivalence process. In a finite number
of such steps we arrive at a finite set of disjoint convex bodies $\{C_{i}\}$
such that\textbf{ }$\mathcal{F}(\cup C_{i})\subset int(\cup C_{i})$.
\end{proof}

Lemma \ref{(2)implies(3)A} and Lemma \ref{(2)implies(3)B} taken together imply
that $(2)\Rightarrow(3)$ in Theorem \ref{main}.

\section{\label{p1of3implies4sec}Part 1 of the proof that $(3)\Rightarrow(4)$
in Theorem \ref{main}}

The standing assumption in this section is that statement $(3)$ of Theorem
\ref{main} is true. We begin to develop a metric with respect to which
$\mathcal{F}$ is contractive. The final metric is defined in the next section.

Let $\mathcal{U}:=\{C_{1},C_{2},...,C_{q}\}$ be the set of nonempty convex
connected components in statement (3) of Theorem \ref{main}. Define a directed
graph (digraph) $G$ as follows. The nodes of $G$ are the elements of
$\mathcal{U}$. For each $f\in\mathcal{F}$, there is an edge \textit{colored}
$f$ directed from node $U$ to node $V$ if $f(U)\subset int(V)$. Note that, for
each node $U$ in $G$, there is exactly one edge of each color emanating from
$U$. Note also that $G$ may have \textit{multiple edges} from one node to
another and may have loops. (A \textit{loop} is an edge from a node to itself.)

A \textit{directed path} in a digraph is a sequence of nodes $U_{0}%
,U_{1},\dots,U_{k}$ such that there is an edge directed from $U_{i-1}$ to
$U_{i}$ for $i=1,2\dots,k$. Note that a directed path is allowed to have
repeated nodes and edges. Let $p=U_{0},U_{1},\dots,U_{k}$ be a directed path.
If $f_{1},f_{2},\dots,f_{k}$ are the colors of the successive edges, then we
will say that $p$ has \textit{type} $f_{1}\,f_{2}\cdots\,f_{k}$.

\begin{lemma}
\label{lem7} The graph $G$ cannot have two directed cycles of the same type
starting at different nodes.
\end{lemma}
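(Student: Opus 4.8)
The plan is to argue by contradiction, suppose $G$ has two directed cycles of the same type $f_1 f_2 \cdots f_k$, one starting at node $U_0$ and one starting at a different node $V_0$. Write the first cycle as $U_0, U_1, \dots, U_k = U_0$ and the second as $V_0, V_1, \dots, V_k = V_0$, where the edge from $U_{i-1}$ to $U_i$ and the edge from $V_{i-1}$ to $V_i$ both have color $f_i$. Set $g = f_k \circ f_{k-1} \circ \cdots \circ f_1$, the projective transformation obtained by composing along the cycle. Then, chasing the edge definition (an edge colored $f$ from $X$ to $Y$ means $f(X) \subset \operatorname{int}(Y)$), we get $g(U_0) \subset \operatorname{int}(U_0)$ and $g(V_0) \subset \operatorname{int}(V_0)$. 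In particular $g$ maps each of the two disjoint convex bodies $U_0$ and $V_0$ strictly into itself.

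The next step is to pass to the closures of the nested iterates, since $g$ is continuous and $U_0$, $V_0$ are compact, $\bigcap_{m\ge 0} g^m(U_0)$ is a nonempty compact $g$-invariant set contained in $U_0$, and similarly inside $V_0$; these two sets are disjoint because $U_0 \cap V_0 = \emptyset$. Hence $g$, viewed as a projective transformation of $\mathbb{P}^n$, has (at least) two disjoint nonempty compact invariant sets. Now I want to use that $g$ is a single projective map to derive a contradiction. The key point is that $U_0$ and $V_0$ are each convex with respect to a common avoided hyperplane $H$ (all the $C_i$ in statement (3), and in particular these, avoid $H$, as established in the construction in Section~\ref{2implies3sec}), and $g$ restricted to a convex body it maps strictly inside itself is a contraction in the Hilbert metric of that body. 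So on $U_0$ the map $g$ has a unique fixed point $p_U \in \operatorname{int}(U_0)$, and on $V_0$ a unique fixed point $p_V \in \operatorname{int}(V_0)$, and $p_U \neq p_V$ since the interiors are disjoint. Thus $g \in PGL(n+1,\mathbb{R})$ has two distinct fixed points $p_U, p_V$ with the property that near each one $g$ is a strict contraction — equivalently, lifting to a matrix $L_g$, both $p_U$ and $p_V$ are eigendirections whose eigenvalue is strictly dominant. A matrix cannot have two different eigenvectors each belonging to a strictly dominant eigenvalue (a strictly dominant eigenvalue is simple and its eigenspace is one-dimensional, and two such would force $|\lambda_1| > |\lambda_2|$ and $|\lambda_2| > |\lambda_1|$ simultaneously). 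This contradiction proves the lemma.

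The main obstacle, and the step that needs care, is the middle one: justifying that $g$ is genuinely a \emph{strict} contraction on $U_0$ in a metric with a unique fixed point, rather than merely satisfying $g(U_0)\subset\operatorname{int}(U_0)$. One must invoke the standard fact that if a convex body $K$ avoids a hyperplane and a projective map sends $K$ into $\operatorname{int}(K)$, then it is a contraction of the Hilbert metric $d_K$ on $\operatorname{int}(K)$ with a Lipschitz constant strictly below $1$ (this uses compactness of $\overline{g(K)}$ inside $\operatorname{int}(K)$ together with projective invariance of the cross-ratio); the Banach fixed point theorem on $(\operatorname{int}(K), d_K)$, which is complete, then yields the unique dominant eigendirection. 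An alternative that avoids the Hilbert metric entirely: observe directly that $\bigcap_m g^m(U_0)$ and $\bigcap_m g^m(V_0)$ are disjoint nonempty compact sets each satisfying $g(X)=X$; project to affine charts and use that a projective transformation with a strictly nested image $g(K)\subset\operatorname{int}(K)$ on a convex body forces a single dominant real eigenvalue, so the invariant set is a point — either route closes the argument. I would present the Hilbert-metric version since that machinery is already set up in Section~\ref{hilbert} and will be reused in Section~\ref{p2of3implies4sec} anyway.
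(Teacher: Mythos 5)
Your proposal is correct, but it reaches the contradiction by a different route than the paper. Both proofs begin identically: compose the colors along the two cycles to get a single projective map $g$ with $g(U_0)\subset int(U_0)$ and $g(V_0)\subset int(V_0)$ for disjoint convex bodies $U_0\neq V_0$. At that point the paper simply lifts the two bodies to closed convex cones in ${\mathbb R}^{n+1}$ and invokes the Krein--Rutman theorem: a linear map sending a closed convex cone into its interior has its spectral radius as a \emph{simple} eigenvalue with eigenvector in the interior of the cone, so the unique dominant eigendirection would have to lie in the interiors of both disjoint bodies, which is absurd. You instead produce, via Birkhoff's Hilbert-metric contraction theorem and the Banach fixed point theorem, an attracting fixed point $p_U\in int(U_0)$ and $p_V\in int(V_0)$, and then argue by Jordan form that each such attracting fixed point is an eigendirection of a strictly dominant simple eigenvalue, so two of them cannot coexist; this is essentially a re-derivation of the Krein--Rutman conclusion. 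Your route is sound (the Birkhoff contraction fact you need is classical and independent of this lemma, so there is no circularity even though the paper only sets up its Hilbert-metric estimates afterwards, in Lemma \ref{HilbertLemma2} and Proposition \ref{Hilbert}), but it carries two extra obligations the paper's one-line appeal avoids: justifying the strict contraction constant (compactness of $g(U_0)$ inside $int(U_0)$ giving finite Hilbert diameter), and the final spectral step, where you should note explicitly that the case of two independent eigenvectors for the \emph{same} eigenvalue is excluded because a plane of fixed directions through $p_U$ would destroy local attraction. A small inaccuracy that does not affect the argument: statement (3) does not provide a single hyperplane avoided by all the $C_i$; you only need that each convex body avoids \emph{some} hyperplane, which is part of Definition \ref{convbdy}. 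What the paper's approach buys is brevity and no metric machinery; what yours buys is that it runs entirely on tools (Hilbert metric, attractive fixed points and dominant eigenvalues) that the paper develops and reuses anyway in Sections \ref{p2of3implies4sec} and \ref{(4)implies(1)sec}.
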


\begin{proof}
By way of contradiction assume that $U\neq U^{\prime}$ are the starting nodes
of two paths $p$ and $p^{\prime}$ of the same type $f_{1}\,f_{2}\cdots\,f_{k}%
$. Recall that the colors are functions of the IFS $\mathcal{F}$. If
$g=f_{k}\circ f_{k-1}\circ\cdots\circ f_{1}\circ f_{0}$, then the composition
$g$ takes the convex set $U$ into $int(U)$ and the convex set $U^{\prime}$
into $int(U^{\prime})$. By the Krein-Rutman theorem \cite{KreinRutman} this is
impossible. More specifically, the Krein-Rutman theorem tells us that if $K$
is a closed convex cone in ${\mathbb{R}} ^{n+1}$ and $L \, :\, {\mathbb{R}}
^{n+1} \rightarrow{\mathbb{R}} ^{n+1}$ is a linear transformation such that
$L(K) \subset int(K)$, then the spectral radius $r(L) > 0$ is a simple
eigenvalue of $L$ with an eigenvector $v\in int(K)$.
\end{proof}

Each function $f\in\mathcal{F}$ acts on the set of nodes of $G$ in this way:
$f(U)=V$ where $(U,V)$ is the unique edge of color $f$ starting at $U$.

\begin{lemma}
\label{metric} There exists a metric $d_{G}$ on the set of nodes of $G$ such that

\begin{enumerate}
\item $d_{G}(U,V) \geq2$ for all $U\neq V$ and

\item each $f\in\mathcal{F}$ is a contraction with respect to $d_{G}$.
\end{enumerate}
\end{lemma}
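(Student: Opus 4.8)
The plan is to build $d_{G}$ on the finite node set of $G$ by taking a suitable rescaling of the discrete metric, so that condition (1) is automatic and condition (2) becomes a purely combinatorial statement about the action of the functions $f\in\mathcal F$ on the nodes. The only way the discrete metric can fail to be contracted by $f$ is if $f$ identifies two distinct nodes, i.e. $f(U)=f(V)$ with $U\neq V$; but each $f$ is an injection on $\mathbb P^{n}$, hence acts injectively on the nodes, so $f(U)\neq f(V)$ whenever $U\neq V$. Thus for any fixed constant $c\ge 2$ the metric $d_{G}(U,V)=c$ for $U\neq V$ already satisfies (1), and satisfies $d_{G}(f(U),f(V))=c=d_{G}(U,V)$ — but this is only nonexpansive, not a strict contraction. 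The task is therefore to perturb this flat metric so that a genuine contraction factor $\alpha<1$ appears, uniformly over the finitely many functions and node pairs.

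The key step I would carry out is to introduce a real-valued "potential" $h$ on the nodes and set $d_{G}(U,V)=c+|h(U)-h(V)|$ for $U\neq V$ (and $0$ on the diagonal); one checks this is a metric whenever $c\ge 0$, and condition (1) holds as soon as $c\ge 2$. For $f$ to contract, I want $|h(f(U))-h(f(V))| < |h(U)-h(V)|$ to hold with a uniform factor whenever the right side is positive, and $|h(f(U))-h(f(V))|$ bounded when $h(U)=h(V)$ — the latter forces $f(U)\neq f(V)$, which we have. The natural choice is to exploit Lemma~\ref{lem7}: since no two directed cycles of the same type start at different nodes, the dynamics of the word maps on nodes is "tree-like enough" that one can find $h$ strictly decreasing along edges in an averaged sense. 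Concretely, I would consider on the space of functions $h\colon\text{nodes}\to\R$ the operator $(Th)(U)=\max_{f\in\mathcal F} h(f(U))$ (or an averaged variant), and look for $h$ with $h - Th$ bounded below by a positive constant on all pairs that are separated. If no such $h$ existed, a compactness/fixed-point argument would produce a type-periodic orbit forcing two equal-type cycles from distinct nodes — contradicting Lemma~\ref{lem7}.

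More carefully, here is the route I expect to work. Enumerate the nodes and, for each pair $U\neq V$, consider the (finitely many, by finiteness of the node set) sequences of functions; because there is no repeated-type cycle from two distinct nodes, for each pair $(U,V)$ there is a length $N$ such that after applying any word of length $N$ the images have "merged" in the sense that the pair $(f_{w}(U),f_{w}(V))$ lies among a strictly smaller collection — iterating, every pair is eventually collapsed to a diagonal pair. This gives a well-defined finite "separation rank" $\rho(U,V)$ = the largest $k$ such that some word of length $k$ keeps $U$ and $V$ (and their intermediate images) in distinct nodes at every step; Lemma~\ref{lem7} guarantees $\rho(U,V)<\infty$. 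Then define $h$ so that $h$ drops by a fixed amount $\delta>0$ each time the rank drops, e.g. assign weights by rank, and set $d_{G}(U,V)=2+\rho(U,V)\cdot\delta$ for $U\neq V$. The triangle inequality needs $\rho$ (or the derived weight) to be subadditive-like in the appropriate sense, which I would verify directly from its definition as a max over words. Condition (1) is immediate. For (2): applying $f$ sends a pair of distinct nodes $(U,V)$ to $(f(U),f(V))$ which is still a pair of distinct nodes (injectivity), and by definition of $\rho$ we get $\rho(f(U),f(V))\le \rho(U,V)-1$ when $\rho(U,V)\ge 1$, so $d_{G}(f(U),f(V))\le 2+(\rho(U,V)-1)\delta = d_{G}(U,V)-\delta$, whereas $d_{G}(U,V)$ ranges over the finite set $\{2+k\delta\}$, so there is a uniform $\alpha<1$ with $d_{G}(f(U),f(V))\le \alpha\, d_{G}(U,V)$; and when $\rho(U,V)=0$, $\rho(f(U),f(V))=0$ too, giving $d_{G}(f(U),f(V))=2=d_{G}(U,V)$, which is $\le \alpha\,d_{G}(U,V)$ only if $\alpha\ge 1$ — so I must be slightly more careful, ensuring via the weighting that rank-$0$ pairs never map to rank-$0$ pairs unless... actually since $\rho=0$ means already a diagonal-forced situation, and $f$ injective keeps them off the diagonal, one checks $\rho(U,V)=0$ is impossible for $U\neq V$, i.e. $\rho\ge 1$ always, so the contraction with uniform $\alpha = \frac{2+(k_{\max}-1)\delta}{2+k_{\max}\delta}<1$ works on every pair.

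The main obstacle I anticipate is verifying the triangle inequality for $d_{G}$ built from the rank function $\rho$: unlike $|h(U)-h(V)|$, a max-over-words quantity need not be a metric on the nose, so I may instead need to first produce a genuine real potential $h$ (via the rank, or via solving $h(U) = \max_f h(f(U)) + 1$ on the node set — which is solvable precisely because the associated "merge digraph" has no same-type cycles from distinct nodes, hence no obstruction to such a strictly-decreasing labeling) and then set $d_{G}(U,V)=2+|h(U)-h(V)|$, for which the triangle inequality is free. Pinning down exactly which combinatorial consequence of Lemma~\ref{lem7} makes the labeling $h$ exist — and confirming the resulting contraction factor is uniform over the finitely many $f\in\mathcal F$ — is the crux; everything else is bookkeeping.
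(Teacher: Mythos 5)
Your main construction (the separation rank $\rho$ on unordered pairs of distinct nodes, with $d_{G}(U,V)=2+\rho(U,V)\,\delta$) is essentially the paper's proof in different clothing: the paper forms the digraph $G_{2}$ on unordered pairs, uses Lemma \ref{lem7} to show $G_{2}$ is acyclic, takes a linear extension of the induced partial order and assigns strictly increasing values in $[2,4]$; your $\rho$ is exactly the longest-path height in that same acyclic digraph and plays the same role. However, two steps in your writeup are wrong as stated. First, the claim that each $f\in\mathcal{F}$ ``acts injectively on the nodes'' is false: $f$ is injective on $\mathbb{P}^{n}$, but two disjoint convex bodies $C_{i}\neq C_{j}$ can both be mapped into the interior of one and the same $C_{k}$, so the induced map on the node set can identify nodes. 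Consequently $\rho(U,V)=0$ (every $f$ collapses the pair) cannot be excluded, contrary to your final fix. The case is in fact harmless, but for the opposite reason: if $f(U)=f(V)$ then $d_{G}(f(U),f(V))=0$ and the contraction inequality is trivial, while if $f(U)\neq f(V)$ then $\rho(f(U),f(V))\leq\rho(U,V)-1$ gives your uniform factor. The argument must be repaired to treat collapsing pairs this way rather than ruling them out by injectivity.

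Second, the triangle inequality, which you yourself flag as the crux, is left unresolved, and the fallback you propose is broken. The labeling equation $h(U)=\max_{f}h(f(U))+1$ on the node set has no solution: $G$ always contains directed cycles (iterate a single $f$ on the finite node set and the orbit is eventually periodic), and Lemma \ref{lem7} forbids only two cycles of the same type based at \emph{different} nodes, not cycles per se. Moreover a metric of the form $2+|h(U)-h(V)|$ cannot be uniformly contracted on pairs with $h(U)=h(V)$ but $f(U)\neq f(V)$, since both distances equal at least $2$. No subadditivity of $\rho$ is needed; the easy repair is precisely the device the paper uses: confine all off-diagonal distances to $[2,4]$ — here, choose $\delta\leq 2/k_{\max}$ — so that for any three distinct nodes $d_{G}(U,W)\leq 4\leq d_{G}(U,V)+d_{G}(V,W)$ and the triangle inequality is automatic. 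With these two corrections your rank-based metric does prove the lemma and is essentially equivalent to the paper's linear-extension construction.
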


\begin{proof}
Starting from the graph $G$, construct a directed graph $G_{2}$ whose set of
nodes consists of all unorder pairs $\{U,V\}$ of distinct nodes of $G$. In
$G_{2}$ there is an edge from $\{U,V\}$ to $\{f(U),f(V)\}$ for all nodes
$\{U,V\}$ in $G_{2}$ and for each $f\in\mathcal{F}$. Since $G$ has no two
directed cycles of the same type starting at different nodes, we know by Lemma
\ref{lem7} that $G_{2}$ has no directed cycle. Because of this, a partial
order $\prec$ can be defined on the node set of $G_{2}$ by declaring that
$\{U^{\prime},V^{\prime}\}\prec\{U,V\}$ if there is an edge from $\{U,V\}$ to
$\{U^{\prime},V^{\prime}\}$ and then taking the transitive closure. Every
finite partially ordered set has a linear extension (see \cite{brualdi} for
example), i.e. there is an ordering $<$ of the nodes of $G_{2}$:
\[
\{U_{1},V_{1}\}<\{U_{2},V_{2}\}<\cdots<\{U_{m},V_{m}\}
\]
such that if $\{U,V\}\prec\{U^{\prime},V^{\prime}\}$ then $\{U,V\}<\{U^{\prime
},V^{\prime}\}$. Using $N(G)$ to denote the set of nodes of $G$, define a map
$d_{G}:\,N(G)\times N(G)\rightarrow\lbrack0,\infty)$ in any way satisfying

\begin{enumerate}
\item $d_{G} (U,U)=0$ for all $U\in N(G),$

\item $d_{G} (U,V)=d_{G} (V,U)$ for all $U,V\in N(G)$, and

\item $2\leq d_{G}(U_{1},V_{1})<d_{G}(U_{2},V_{2})<\cdots<d_{G}(U_{m},V_{m})
\leq4$.
\end{enumerate}

Properties (1), (2) and (3) guarantee that $d_{G}$ is a metric on $N(G)$. The
fact $2\leq d_{G}(U_{i},V_{i})\leq4$ for all $i$ guarantees the triangle
inequality. If
\[
s=\min_{1\leq i<m}\,\frac{d_{G}(U_{i},V_{i})}{d_{G}(U_{i+1},V_{i+1})},
\]
then $0<s<1$ and, for any $f\in\mathcal{F}$, we have
\[
d_{G}(f(U),f(V))\leq s\,d_{G}(U,V)
\]
because $\{f(U),f(V)\}\prec\{U,V\}$ by the definition of the partial order and
\linebreak$\{f(U),f(V)\}<\{U,V\}$ by the definition of linear extension. Hence
$f$ is a contraction with respect to $d_{G}$ for any $f\in\mathcal{F}$.
\end{proof}

\section{\label{p2of3implies4sec}Part 2 of the proof that (3)$\Rightarrow$(4)
in Theorem \ref{main}}

In this section we construct a metric $d_{i}$ on each component $C_{i}$ of the
collection $\left\{  C_{i}\right\}  =\left\{  C_{i}:i=1,2,...,q\right\}  $ in
statement (3) of Theorem \ref{main}$.$ We will then combine the metrics
$d_{i}$ with the graph metric $d_{G}$ in Section \ref{p1of3implies4sec} to
build a metric on $\cup_{i}C_{i}$ such that statement (4) in Theorem
\ref{main} is true. Proofs that a projective transformation is contractive
with respect to the Hilbert metric go back to G. Birkhoff \cite{birkhoff};
also see P. J. Bushell \cite{bushell}. The next lemma is used to compute the
contraction factor for projective maps under the Hilbert Metric.

\begin{lemma}
\label{HilbertLemma2} If $r \ge\alpha\ge0, t \ge\alpha,$ and $h,h^{\prime
},s,s^{\prime}\in(0,1),$ where $s^{\prime}= 1-s, h^{\prime}= 1 - h,$ and $s
\le h,$ then $\log(\frac{(r +h)(t +s^{\prime})}{(r +s)(t +h^{\prime})})
\le\log(\frac{(\alpha+h)(\alpha+s^{\prime})}{(\alpha+s)(\alpha+h^{\prime})})
\le\frac{1}{\alpha+1} \log(\frac{hs^{\prime}}{sh^{\prime}}).$
\end{lemma}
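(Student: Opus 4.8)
The plan is to prove the chain of two inequalities in Lemma~\ref{HilbertLemma2} essentially by calculus, treating the left inequality as a monotonicity statement in the two variables $r$ and $t$, and the right inequality as a one-variable optimization in $\alpha$. Throughout, $h,h',s,s'\in(0,1)$, $s'=1-s$, $h'=1-h$, and $s\le h$ are fixed; since $s\le h$ we have $h' \le s'$.

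\medskip

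\emph{Left inequality.} Define $F(r,t)=\log\!\bigl(\tfrac{(r+h)(t+s')}{(r+s)(t+h')}\bigr)=\log(r+h)-\log(r+s)+\log(t+s')-\log(t+h')$. I would show $F$ is nonincreasing in $r$ on $[\alpha,\infty)$ and nonincreasing in $t$ on $[\alpha,\infty)$, so that $F(r,t)\le F(\alpha,\alpha)$ whenever $r\ge\alpha$, $t\ge\alpha$. For the $r$-partial, $\partial_r F = \tfrac{1}{r+h}-\tfrac{1}{r+s}$, which is $\le 0$ exactly when $s\le h$; this is our hypothesis. For the $t$-partial, $\partial_t F = \tfrac{1}{t+s'}-\tfrac{1}{t+h'}$, which is $\le 0$ exactly when $h'\le s'$, i.e.\ again when $s\le h$. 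Hence $F(r,t)\le F(\alpha,\alpha)$, which is the first inequality. (One should note the first inequality needs $r\ge\alpha$ and $t\ge\alpha$ but not $r\ge\alpha\ge 0$ beyond ensuring denominators are positive; the condition $\alpha\ge 0$ together with $h,s>0$ keeps everything positive.)

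\medskip

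\emph{Right inequality.} It remains to prove $G(\alpha):=\log\!\bigl(\tfrac{(\alpha+h)(\alpha+s')}{(\alpha+s)(\alpha+h')}\bigr)\le \tfrac{1}{\alpha+1}\log\!\bigl(\tfrac{hs'}{sh'}\bigr)$ for $\alpha\ge 0$. Write $\Lambda=\log\!\bigl(\tfrac{hs'}{sh'}\bigr)\ge 0$ (nonnegative since $s\le h$ gives $h/s\ge 1$ and $s'/h'\ge 1$). At $\alpha=0$ both sides equal $\Lambda$, so equality holds; thus it suffices to show that $(\alpha+1)G(\alpha)$ is nonincreasing in $\alpha$ on $[0,\infty)$, i.e.\ $\tfrac{d}{d\alpha}\bigl[(\alpha+1)G(\alpha)\bigr]=G(\alpha)+(\alpha+1)G'(\alpha)\le 0$. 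Here $G'(\alpha)=\tfrac{1}{\alpha+h}-\tfrac{1}{\alpha+s}+\tfrac{1}{\alpha+s'}-\tfrac{1}{\alpha+h'}$, which is $\le 0$ by the same sign analysis as above ($s\le h$ and $h'\le s'$). So $(\alpha+1)G'(\alpha)\le 0$; but $G(\alpha)\ge 0$ is \emph{not} obviously dominated, so this crude bound is insufficient and the estimate must be sharpened. Instead I would work with the function $\Phi(\alpha)=(\alpha+1)G(\alpha)$ directly and show $\Phi'(\alpha)\le 0$ by combining terms in pairs: group $\tfrac{(\alpha+1)}{\alpha+h} - \tfrac{(\alpha+1)}{\alpha+s}$ with the corresponding log terms, using that for $0<a<b$ one has $\log\tfrac{\alpha+b}{\alpha+a}\le (\alpha+1)\bigl(\tfrac{1}{\alpha+a}-\tfrac{1}{\alpha+b}\bigr)\cdot\frac{?}{}$ — more precisely, I expect the clean route is to prove the pointwise inequality $\log\tfrac{\alpha+b}{\alpha+a}\le \tfrac{b-a}{\alpha+1}\cdot\frac{\alpha+1}{\sqrt{(\alpha+a)(\alpha+b)}}$ type bounds are too lossy, and instead reduce to the scalar fact that $x\mapsto \tfrac{1}{x+1}\log\tfrac{x+b}{x+a}$ is nonincreasing, which follows from $\log\tfrac{x+b}{x+a}\le (x+1)\bigl(\tfrac{1}{x+a}-\tfrac{1}{x+b}\bigr)$ — and this last is exactly the statement that $\log(1+u)\le u$ after a substitution, valid because $a,b\le 1\le x+1$. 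Summing the two such monotone pieces (the $(h,s)$ pair and the $(h',s')$ pair, noting $h,s,h',s'\in(0,1)$) gives $\Phi(\alpha)\le\Phi(0)=\Lambda$, completing the proof.

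\medskip

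\emph{Main obstacle.} The delicate point is the right inequality: the naive derivative bound $G(\alpha)+(\alpha+1)G'(\alpha)\le 0$ fails because $G(\alpha)$ and $-(\alpha+1)G'(\alpha)$ are of comparable size, so one cannot simply discard $G(\alpha)$. The fix is to handle the two ``ratio blocks'' $\tfrac{\alpha+h}{\alpha+s}$ and $\tfrac{\alpha+s'}{\alpha+h'}$ \emph{separately}, proving for each that $\alpha\mapsto\tfrac{1}{\alpha+1}\log\tfrac{\alpha+b}{\alpha+a}$ is nonincreasing when $0<a<b\le 1$; this reduces to the elementary inequality $\log(\alpha+b)-\log(\alpha+a)\le\frac{(\alpha+1)(b-a)}{(\alpha+a)(\alpha+b)}$, which in turn follows from $\log t\le t-1$ applied to $t=\tfrac{\alpha+b}{\alpha+a}$ together with $\alpha+1\ge \max(\alpha+a,\alpha+b)$ since $a,b\le 1$. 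I expect verifying the hypotheses line up ($s\le h\Rightarrow h'\le s'$, all four of $h,s,h',s'$ in $(0,1)$, and $\alpha\ge 0$ keeping $\alpha+1$ the largest denominator) to be the only place care is genuinely required.
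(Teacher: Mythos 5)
Your proof is correct in substance but takes a genuinely different route from the paper on the key (second) inequality. For the first inequality you compute the partials in $r$ and $t$ and use $s\le h$ (hence $h'\le s'$); the paper dispatches this monotonicity as ``a straightforward argument,'' so there you are merely filling in the same idea. For the second inequality the paper rewrites $\frac{(\alpha+h)(\alpha+s')}{(\alpha+s)(\alpha+h')}$ as $\frac{(1-\frac{h'}{\alpha+1})(1-\frac{s}{\alpha+1})}{(1-\frac{s'}{\alpha+1})(1-\frac{h}{\alpha+1})}$, expands all four logarithms in the series $-\sum_{j\ge1}x^j/j$, and compares numerator and denominator of $\log N/\log D$ term by term, using $(1-s)^j+h^j\ge(1-h)^j+s^j$, which produces the factor $\frac{1}{\alpha+1}$ directly. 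You instead split $G(\alpha)=\log\frac{\alpha+h}{\alpha+s}+\log\frac{\alpha+s'}{\alpha+h'}$ and prove the sharper per-block statement that $\psi(\alpha)=(\alpha+1)\log\frac{\alpha+b}{\alpha+a}$ is nonincreasing for $0<a\le b\le1$, via $\psi'\le0$, i.e.\ $\log\frac{\alpha+b}{\alpha+a}\le\frac{(\alpha+1)(b-a)}{(\alpha+a)(\alpha+b)}$, which follows from $\log t\le t-1$ and $b\le1$; summing the two blocks gives $(\alpha+1)G(\alpha)\le G(0)$, which is exactly the claim. Your route is more elementary (no series manipulation) and yields a slightly stronger per-factor inequality; the paper's series argument is more mechanical and avoids differentiation.

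One slip to correct: mid-paragraph you say the reduction is to ``the scalar fact that $x\mapsto\frac{1}{x+1}\log\frac{x+b}{x+a}$ is nonincreasing.'' That function is trivially nonincreasing and only gives the useless bound $\log\frac{\alpha+b}{\alpha+a}\le(\alpha+1)\log\frac{b}{a}$; what you need, and what your displayed elementary inequality actually establishes, is that $x\mapsto(x+1)\log\frac{x+b}{x+a}$ is nonincreasing. Since $\log\frac{x+b}{x+a}\le(x+1)\bigl(\frac{1}{x+a}-\frac{1}{x+b}\bigr)$ is precisely the derivative condition for the latter, and you use it that way when summing to get $\Phi(\alpha)\le\Phi(0)$, this is a mislabeling rather than a gap, but it should be fixed. (Also, strictly speaking $G(\alpha)+(\alpha+1)G'(\alpha)\le0$ does hold --- your per-block argument proves it; what fails is only the crude attempt to obtain it by discarding $G(\alpha)$.)
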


\begin{proof}
Since we are assuming that $s \le h, s(1-h) >0,$ and $\alpha\ge0,$ it is an
easy exercise to show that $\frac{(\alpha+h)(\alpha+s^{\prime})}%
{(\alpha+s)(\alpha+h^{\prime})} \ge1.$ A bit of algebra can be used to show
that $N:= \frac{(\alpha+h)(\alpha+s^{\prime})}{(\alpha+s)(\alpha+h^{\prime})}
= \frac{ (1 - \frac{h^{\prime}}{\alpha+1}) (1 - \frac{s }{\alpha+1} ) } { (1 -
\frac{s^{\prime}}{\alpha+1}) (1 - \frac{h }{\alpha+1} ) }.$ If we let $\alpha=
0$ in the above expression, we observe that $D:=\frac{h s^{\prime}}{s
h^{\prime}} = \frac{ (1 - h^{\prime})(1-s) }{(1-s^{\prime})(1-h)}.$

Since $\ln(1-x) = \log_{e}(1-x) = -\sum_{j=1}^{\infty} \frac{x^{j}}{j},$
whenever $|x| < 1,$ for a logarithm of any base we see that
\begin{align*}
\frac{\log(N)}{\log(D)} = \  &  \frac{ \log(1 - \frac{h^{\prime}}{\alpha+1}) +
\log(1 - \frac{s }{\alpha+1} ) - \log(1 - \frac{s^{\prime}}{\alpha+1}) -
\log(1 - \frac{h }{\alpha+1} ) } { \log(1 - h^{\prime}) + \log(1-s) -
\log(1-s^{\prime}) - \log(1-h) }\\
= \  &  \frac{ -\sum_{j=1}^{\infty} \big[ \frac{h^{\prime j}}{j(\alpha+1)^{j}
} + \frac{s^{j}}{j(\alpha+1)^{j} } - \frac{s^{\prime j}}{j(\alpha+1)^{j} } -
\frac{h^{j}}{j(\alpha+1)^{j} } \big] } { -\sum_{j=1}^{\infty} \big[ \frac
{h^{\prime j}}{j} + \frac{s^{j}}{j} - \frac{s^{\prime j}}{j} - \frac{h^{j}}{j}
\big] }\\
= \  &  \frac{1}{\alpha+1} \frac{ \sum_{j=1}^{\infty} \frac{1}{(\alpha
+1)^{j-1}} \big[ \frac{s^{\prime j}}{j } + \frac{h^{j}}{j } - \frac{h^{\prime
j}}{j } - \frac{s^{j}}{j } \big] } { \sum_{j=1}^{\infty} \big[ \frac{s^{\prime
j}}{j}+ \frac{h^{j}}{j} - \frac{h^{\prime j}}{j} - \frac{s^{j}}{j} \big] }\\
\le\  &  \frac{1}{\alpha+1}.
\end{align*}
Note that the above inequality holds because the assumption $s \le h$ implies
$s^{\prime}= 1 - s \ge1-h = h^{\prime}$ and $(1-s)^{j} + h^{j} \ge(1-h)^{j} +
s^{j},$ for all positive integers $j.$ Thus, the series in the numerator and
denominator can be compared term by term. Finally, it is a straightforward
argument to show the numerator $N(\alpha)$ has the property that if $r
\ge\alpha\ge0$ and $t \ge\alpha\ge0,$ then $\frac{(r +h)(t +s^{\prime})}{(r
+s)(t +h^{\prime})} \le\frac{(\alpha+h)(\alpha+s^{\prime})}{(\alpha
+s)(\alpha+h^{\prime})}.$ Thus, $\log(\frac{(r +h)(t +s^{\prime})}{(r +s)(t
+h^{\prime})}) \le\log(\frac{(\alpha+h)(\alpha+s^{\prime})}{(\alpha
+s)(\alpha+h^{\prime})}).$
\end{proof}

\begin{proposition}
\label{Hilbert} Let $\mathcal{F}$ be a projective IFS and let there be a
nonempty finite collection of disjoint convex bodies $\left\{  C_{i}%
:i=1,2,...,q\right\}  $ such that $\mathcal{F}(\cup_{i}C_{i})\subset
int(\cup_{i}C_{i})$ as in statement (3) of Theorem \ref{main}. For
$i\in\{1,2,...q\}$ and $f\in\mathcal{F}$, let $f(i)\in\left\{
1,2,...,q\right\}  $ be defined by $f(C_{i})\subset C_{f(i)}$. \ Then there is
a metric $d_{i}$ on $C_{i}$, giving the same topology on $C_{i}$ as
$d_{\mathbb{P}}$, such that

1. $(C_{i},d_{i})$ is a complete metric space, for all $i=1,2,...,q;$

2. there is a real $0\leq\alpha<1$ such that
\[
d_{f(i)}(f(x),f(y))\leq\alpha d_{i}(x,y)
\]
for all $x,y\in C_{i}$, for all $i=1,2,...q,$ for all $f\in\mathcal{F}$; and

3. $d_{i}(x,y)\leq1$ for all $x,y\in C_{i}$ and all $i=1,2,...q$.
\end{proposition}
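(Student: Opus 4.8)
The plan is to take each $d_i$ to be a rescaled Hilbert metric of a convex body slightly larger than $C_i$, so that the classical Birkhoff--Bushell contraction property (see \cite{birkhoff, bushell}), sharpened quantitatively by Lemma \ref{HilbertLemma2}, applies. \emph{Step 1 (enlargement).} First I would fatten each $C_i$ to a convex body $B_i$ with $C_i\subset int(B_i)$, keeping the $B_i$ pairwise disjoint and keeping $f(B_i)\subset int(B_{f(i)})$ for all $f\in\mathcal F$ and all $i$. For each $i$, fix a hyperplane $H_i$ disjoint from $C_i$ together with a Euclidean norm on the affine chart $\mathbb P^n\setminus H_i$, and let $B_i$ be the closed $\varepsilon$-neighbourhood of $C_i$ in that chart. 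The $\varepsilon$-neighbourhood of a convex set is convex (a Minkowski sum with a ball) and contains the set in its interior, so $B_i$ is a convex body with $C_i\subset int(B_i)$. Since the finitely many $C_i$ are compact and pairwise disjoint, and since $f(C_i)$ is a compact subset of the open set $int(C_{f(i)})$ for each of the finitely many pairs $(f,i)$, a single sufficiently small $\varepsilon>0$ makes the $B_i$ pairwise disjoint and forces $f(B_i)\subset int(C_{f(i)})\subset int(B_{f(i)})$.

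\emph{Step 2 (the metrics; property 1).} I would then let $d_i$ be the restriction to $C_i$ of the Hilbert metric $d_{B_i}$ of $B_i$. Because $C_i$ is a compact subset of $int(B_i)$, this $d_i$ is a finite metric on $C_i$ and induces there the same topology as $d_{\mathbb P}$, the Hilbert metric inducing the usual topology on the interior of a convex body (see \cite{buseman}). Hence $(C_i,d_i)$ is compact, in particular complete, which is property 1.

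\emph{Step 3 (contraction; property 2).} Fix $f\in\mathcal F$, an index $i$, and distinct $x,y\in C_i$. Let $a,b$ be the two points where the line through $x$ and $y$ meets $\partial B_i$, ordered $a,x,y,b$, so that $d_i(x,y)=\log R(a,b,x,y)$. Applying $f$ carries this to collinear points $f(a),f(x),f(y),f(b)$, and since $f(B_i)\subset int(B_{f(i)})$ the points $f(a),f(b)$ lie strictly inside $B_{f(i)}$; let $a^{\ast},b^{\ast}$ be where the image line meets $\partial B_{f(i)}$, in the order $a^{\ast},f(a),f(x),f(y),f(b),b^{\ast}$. Then $d_{f(i)}(f(x),f(y))=\log R(a^{\ast},b^{\ast},f(x),f(y))$, whereas invariance of the cross ratio gives $\log R(f(a),f(b),f(x),f(y))=\log R(a,b,x,y)=d_i(x,y)$. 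Parametrizing the image line affinely with $f(a),f(b)$ at $0,1$, with $f(x),f(y)$ at parameters $s\le h$ in $(0,1)$, and with $a^{\ast},b^{\ast}$ at $-r$ and $1+t$, this is exactly the configuration of Lemma \ref{HilbertLemma2}, which yields $d_{f(i)}(f(x),f(y))\le\tfrac{1}{\alpha+1}\,d_i(x,y)$ as soon as $r,t\ge\alpha$. The main obstacle is to obtain a single $\alpha>0$ valid for all $x,y,f,i$: here compactness is essential, since $f(B_i)$ is a compact subset of the open set $int(B_{f(i)})$ and is therefore a definite distance from $\partial B_{f(i)}$, which bounds the quantities $r$ and $t$ (ratios of such a distance to the bounded diameters of the $B_j$) below by a uniform $\alpha_0>0$. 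Taking $\alpha:=\tfrac{1}{\alpha_0+1}<1$ then gives property 2.

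\emph{Step 4 (normalization; property 3).} Finally, with $M:=\max_i\operatorname{diam}_{d_i}(C_i)$, which is finite because each $(C_i,d_i)$ is compact, I would replace each $d_i$ by $d_i/M$. This preserves completeness and the topology, leaves the contraction factor $\alpha$ unchanged, and ensures $d_i(x,y)\le1$ for all $x,y\in C_i$ and all $i$, which is property 3.
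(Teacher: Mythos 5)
Your proposal is correct and follows essentially the same route as the paper: enlarge each $C_{i}$ slightly to a convex body on which the Hilbert metric is used, apply Lemma \ref{HilbertLemma2} together with a compactness argument to obtain a uniform contraction factor, and rescale to force diameter at most $1$. The only (harmless) difference is that you fatten within an affine chart via a Minkowski sum, which makes convexity of the enlarged bodies automatic and makes the normalization of $r,t$ by the chord length explicit, whereas the paper fattens in the round metric $d_{\mathbb{P}}$ and packages the same lower bound into the constants $\beta_{f,i}$.
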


\begin{proof}
For each $C_{i}$ there exists a hyperplane $H_{i}$ such that $H_{i}\cap
C_{i}=\emptyset$. Let $\widehat{C}_{i}=\{x\in\mathbb{P}^{n}:d_{\mathbb{P}%
}(x,y)\leq\varepsilon,y\in C_{i}\}$ where $\varepsilon$ is chosen so small
that (i) $H_{i}\cap\widehat{C}_{i}=\emptyset$; and (ii) $f(\widehat{C}%
_{i})\subset int(\widehat{C}_{f(i)})$ $\forall f\in\mathcal{F},\forall
i\in\left\{  1,2,...q\right\}  $.

Given arbitrary $x,y\in int\left(  \widehat{C}_{i}\right)  $, let $a,b$ be the
points where the line $\overline{xy}$ intersects $\partial\widehat{C}_{i}$ and
let $a_{f},b_{f}$ be the points where the line $\overline{f(x)f(y)}$
intersects $\partial\widehat{C}_{f(i)}$. Let $\hat{d}_{i}$ denote the Hilbert
metric on the interior of $\widehat{C}_{i}$ for each $i$, and define
\[
\beta_{f,i}=\min\{|xy|:x\in\partial\widehat{C}_{f(i)},y\in f(\widehat{C}%
_{i})\}>0,\text{ for }f\in\mathcal{F}\text{,} i\in\left\{  1,2,...q\right\}
\text{.}
\]
We claim that
\begin{align}
\hat{d}_{f(i)}(f(x),f(y))  &  =\ln\left(  \frac{|a_{f}\,f(y)|\,|f(x)\,b_{f}%
|}{|a_{f}\,f(x)|\,|f(y)\,b_{f}|}\right) \label{ineq1}\\
&  \leq\frac{1}{\beta_{f,i}+1}\,\ln\left(  \frac{|f(a)\,f(y)|\,|f(x)\,f(b)|}%
{|f(a)\,f(x)|\,|f(y)\,f(b)|}\right) \nonumber\\
&  =\frac{1}{\beta_{f,i}+1}\,\ln\left(  \frac{|a\,y|\,|x\,b|}{|a\,x|\,|y\,b|}%
\right)  =\frac{1}{\beta_{f,i}+1}\,\hat{d}_{i}(x,y)\text{,}\nonumber
\end{align}
for all $x,y\in int\left(  \widehat{C}_{i}\right)  $, for all $f\in
\mathcal{F}$, and all $i=1,2,...$. Here $|\cdot|$ denotes Euclidean distance
as discussed in Section \ref{hilbert}. The second to last equality is the
invariance of the cross ratio under a projective transformation. Concerning
the inequality, let, without loss of generality, $|f(a)\,f(b)|=1$ and let
$h:=|f(a)\,f(y)|$ and $s:=|f(a)\,f(x)|$. Moreover let $r:=|a_{f}\,f(x)|$ and
$t:=|f(y)\,b_{f}|$. Finally let $s^{\prime}=1-s$ and $h^{\prime}=1-h$. Note
that $s\leq h<1$. The inequality is now the inequality of
Lemma~\ref{HilbertLemma2}.

Now let $\alpha=\max\{\frac{1}{1+\beta_{f,i}}:f\in\mathcal{F}$, $\forall$
$i=1,2,...q\}<1$. It follows that
\[
\hat{d}_{f(i)}(f(x),f(y))\leq\alpha\hat{d}_{i}(x,y)
\]
for all $x,y\in\widehat{C}_{i}$, for all $i=1,2,...q,$ for all $f\in
\mathcal{F}.$ Since $C_{i}\subset int\left(  \widehat{C}_{i}\right)  $ it
follows that statement (2) in Proposition \ref{Hilbert} is true.

Statement (1) follows at once from the fact the topology generated by the
Hilbert metric $\hat{d}_{i}$ on $C_{i}$ as defined above is bi-Lipschitz
equivalent to $d_{\mathbb{P}}$; see Remark \ref{remark4}.

Since $\hat{d}_{i}:C_{i}\times C_{i}\rightarrow\mathbb{R}$ is continuous and
$C_{i}\times C_{i}$ is compact, it follows that there is a constant $J_{i}$
such that $\hat{d}_{i}(x,y)\leq J_{i}$ for all $x,y\in C_{i}$. Let $J=\max
_{i}J_{i}$, and define a new metric $d_{i}$ by $d_{i}(x,y)=\hat{d}_{i}(x,y)/J$
for all $x,y\in C_{i}$. We have that $d_{i}$ satisfies (\textit{1), (2)} and
(3) in the statement of Proposition \ref{Hilbert}.
\end{proof}

\begin{lemma}
\label{(3)implies(4)lemma} \textbf{[Theorem \ref{main} (3)}$\Rightarrow
$\textbf{(4)]:} If there \textit{is a nonempty finite collection of disjoint
convex bodies }$\left\{  C_{i}\right\}  $\textit{ such that } $\mathcal{F}%
(\cup_{i}C_{i})\subset int(\cup_{i}C_{i})$\textit{, as in statement (3) of
Theorem \ref{main}}, then there is a nonempty open set $U\subset\mathbb{P}%
^{n}$ and a metric $d\,:\overline{U}\rightarrow\lbrack0,\infty)$, generating
the same topology as $d_{\mathbb{P}}$ on $\overline{U}$, such that
$\mathcal{F}$ is contractive on $\overline{U}$.
\end{lemma}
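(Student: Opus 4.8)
The plan is to combine the two families of metrics already constructed: the per-component metrics $d_i$ on each convex body $C_i$ from Proposition \ref{Hilbert}, and the graph metric $d_G$ on the node set of $G$ from Lemma \ref{metric}. The set we want is $\overline{U} = \cup_i C_i$ (indeed we can take $U = int(\cup_i C_i)$, since the $C_i$ are disjoint convex bodies so $\overline{U} = \cup_i C_i$), and we want a single metric $d$ on $\overline{U}$ that restricts to something equivalent to $d_i$ on each $C_i$, keeps the distinct components at bounded-away-from-zero distance governed by $d_G$, and makes every $f \in \mathcal{F}$ a uniform contraction. Recall that $f$ acts on components by $C_i \mapsto C_{f(i)}$, and this is exactly the action of $f$ on the nodes of $G$ used in Lemma \ref{metric}; so $d_G(f(i),f(j)) \le s\, d_G(i,j)$ for $i \ne j$, while each $d_i$ contracts with factor $\alpha < 1$ under $f$ (Proposition \ref{Hilbert}, statement 2), with $d_i \le 1$ on $C_i$ (statement 3).

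The natural definition is: for $x \in C_i$ and $y \in C_j$, set $d(x,y) = d_i(x,y)$ if $i = j$, and $d(x,y) = d_G(i,j)$ if $i \ne j$. First I would verify this is a metric: symmetry and positivity are immediate; the only nontrivial point is the triangle inequality $d(x,z) \le d(x,y) + d(y,z)$ across the three cases of whether $x,y,z$ lie in one, two, or three distinct components. Here the key numerical facts are that $d_i \le 1$ for all $i$ while $d_G \ge 2$ for distinct nodes (Lemma \ref{metric}, statement 1), and $d_G$ itself satisfies its triangle inequality; one checks each case by bounding the "within-component" contributions by $1$ and the "cross-component" contributions below by $2$. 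Next I would check that $d$ generates the correct topology on $\overline{U}$: on each $C_i$ it agrees with $d_i$, which by Proposition \ref{Hilbert} generates the $d_{\mathbb{P}}$-topology, and since the finitely many $C_i$ are disjoint compact sets, they are at positive $d_{\mathbb{P}}$-distance from one another, so the disjoint-union topology induced by $d$ coincides with the subspace topology from $(\mathbb{P}^n, d_{\mathbb{P}})$; completeness of $(\overline{U},d)$ follows likewise from completeness of each $(C_i,d_i)$ and the separation of components.

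Finally I would prove contractivity. Fix $f \in \mathcal{F}$ and $x \in C_i$, $y \in C_j$. If $i = j$ then $f(x), f(y) \in C_{f(i)}$ and $d(f(x),f(y)) = d_{f(i)}(f(x),f(y)) \le \alpha\, d_i(x,y) = \alpha\, d(x,y)$. If $i \ne j$: either $f(i) = f(j)$, in which case $d(f(x),f(y)) = d_{f(i)}(f(x),f(y)) \le 1$, and since $d(x,y) = d_G(i,j) \ge 2$ this gives a ratio at most $1/2$; or $f(i) \ne f(j)$, in which case $d(f(x),f(y)) = d_G(f(i),f(j)) \le s\, d_G(i,j) = s\, d(x,y)$. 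Taking the contraction factor to be $\max\{\alpha, 1/2, s\} < 1$ handles all cases uniformly, so $\mathcal{F}$ is contractive on $\overline{U}$. The main obstacle is the case analysis in the triangle inequality — in particular the mixed case where $x,z$ are in the same component but $y$ is in a different one, where one must use $d(x,z) = d_i(x,z) \le 1 \le d_G(i,j) = d(x,y)$ rather than any additivity; everything else is bookkeeping once the scale separation $d_i \le 1 < 2 \le d_G$ is in hand.
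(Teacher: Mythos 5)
Your proposal is correct and follows essentially the same route as the paper: the same piecewise definition $d(x,y)=d_i(x,y)$ within a component and $d(x,y)=d_G(C_i,C_j)$ across components, the same scale-separation argument ($d_i\le 1$, $d_G\ge 2$) for the triangle inequality, the same three-case contraction estimate with factor $\max\{\alpha,1/2,s\}$, and the same appeal to bi-Lipschitz equivalence of the Hilbert and round metrics for the topology.
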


\begin{proof}
Let $U=\cup_{i}int(C_{i}).$ Define $d:\overline{U}\times\overline{U}$ by%
\[
d(x,y)=\left\{
\begin{array}
[c]{ll}%
d_{i}(x,y) & \text{if }\left(  x,y\right)  \in C_{i}\times C_{i}\text{ for
some }i,\\
d_{G}(C_{i},C_{j}) & \text{if }\left(  x,y\right)  \in C_{i}\times C_{j}\text{
for some }i\neq j,
\end{array}
\right.
\]
where the metrics $d_{i}$ and $d_{G}$ are defined in Lemma \ref{metric} and
Proposition \ref{Hilbert}.

First we show that $d$ is a metric on $\overline{U}$. We only need to check
the triangle inequality. If $x,y$ and $z$ lie in the same connected component
of $C_{i}$, the triangle inequality follows from Proposition \ref{Hilbert}. If
$x,y$ and $z$ lie in three distinct components, the triangle inequality
follows from Lemma~\ref{metric}. If $x,y\in C_{i}$ and $z\in C_{j}$ for some
$i\neq j$, then
\begin{align*}
d(x,y)+d(y,z)  &  =d_{i}(x,y)+d_{G}(C_{i},C_{j})\geq d_{G}(C_{i},C_{j})
=d(x,z),\\
d(x,z)+d(z,y)  &  =d_{G}(C_{i},C_{j})+d_{G}(C_{j},C_{i})\geq2\geq
d_{i}(x,y)=d(x,y).
\end{align*}

Second we show that $\mathcal{F}$ is contractive with respect to $d$. By
Proposition \ref{Hilbert} there is $0\leq\alpha<1$ such that, if $x$ and $y$
lie in the same connected component of $U$ and $f\in\mathcal{F}$, then
\[
d(f(x),f(y))\leq\alpha\,d(x,y).
\]
If $x$ and $y$ lie in different connected components of $U$, then there are
two cases. If $f(x)$ and $f(y)$ lie in different connected components, then by
Lemma \ref{metric},
\[
d(f(x),f(y))=d_{G}(f(x),f(y))\leq\alpha_{G}\,d_{G}(x,y)=d(x,y),
\]
where $\alpha_{G}$ is the constant guaranteed by Lemma \ref{metric}. If $f(x)$
and $f(y)$ lie in the same connected component $U_{i}$, then
\[
d(f(x),f(y))=d_{i}(f(x),f(y))\leq1\leq\frac{1}{2}\,d_{G}(x,y)=\frac{1}%
{2}d(x,y).
\]
Third, and last, the metric $d$ generates the same topology on $\overline{U}$
as the metric $d_{\mathbb{P}}$, because, for any convex body $K$, the Hilbert
metric $d_{K}$ and the metric $d_{\mathbb{P}}$ are bi-Lipshitz equivalent on
any compact subset of the interior of $K$; see Remark \ref{remark4} in Section
\ref{remarksec}.
\end{proof}

\section{\label{(4)implies(1)sec}Proof that (4)$\Rightarrow$(1) in Theorem 1
and the Proof of the Uniqueness of Attractors}

This section contains a proof that statement (4) implies statement (1) in
Theorem \ref{main} and a proof of Theorem {\ref{uniquethm} on the uniqueness
of the attractor. }

A point $p_{f}\in\mathbb{P}^{n}$ is said to be an \textit{attractive fixed
point} of the projective transformation $f$ if $f(p_{f})=p_{f}$, and $f$ is a
contraction with respect to the round metric on some open ball centered at
$p_{f}$. If $f$ has an attractive fixed point, then the real Jordan canonical
form \cite{jordan} can be used to show that any matrix $L_{f}\,:\,{\mathbb{R}%
}^{n+1}\rightarrow{\mathbb{R}}^{n+1}$ representing $f$ has a dominant
eigenvalue. In the case that $f$ has an attractive fixed point, let $E_{f}$
denote the $n$-dimensional $L_{f}$-invariant subspace of ${\mathbb{R}}^{n+1}$
that is the span of the eigenspaces corresponding to all the other
eigenvalues. Let $H_{f}:=\phi(E_{f})$ be the corresponding hyperplane in
$\mathbb{P}^{n}$. Note that $H_{f}$ is invariant under $f$ and $p_{f}\notin
H_{f}$. Moreover, the basin of attraction of $p_{f}$ for $f$ is ${\mathbb{P}%
}^{n}\setminus H_{f}$.

\begin{lemma}
\label{4implies1} \textbf{[Theorem~\ref{main} (4)} $\mathbf{\Rightarrow}$
\textbf{(1)]:} If there is a nonempty open set $U\subset\mathbb{P}^{n}$ such
that $\mathcal{F}$ is contractive on $\overline{U}$, then $\mathcal{F}$ has an
attractor $A$ that avoids a hyperplane.
\end{lemma}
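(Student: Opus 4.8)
The plan is to use the contractive metric $d$ on $\overline{U}$ to show that $\mathcal{F}$ restricted to $\overline{U}$ is a classical hyperbolic IFS on a complete metric space, apply the standard fixed-point argument to produce an attractor $A \subset \overline{U}$, and then verify that this $A$ genuinely satisfies Definition \ref{attractdef} with respect to the ambient round metric $d_{\mathbb{P}}$, and that it avoids a hyperplane. The first point to nail down is completeness: $d$ generates the same topology on $\overline{U}$ as $d_{\mathbb{P}}$ (this is part of the hypothesis coming from statement (4)), and $\overline{U}$ is closed in the compact space $(\mathbb{P}^n, d_{\mathbb{P}})$, hence $d_{\mathbb{P}}$-compact; since $d$ and $d_{\mathbb{P}}$ are topologically equivalent on this compact set, $(\overline{U}, d)$ is a complete (indeed compact) metric space. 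Because $\mathcal{F}(\overline U)\subset \overline U$ — in fact $\mathcal F(\overline U)\subset U$ by Definition \ref{contract-def} together with the construction — and each $f|_{\overline U}$ is a $d$-contraction, the Hutchinson operator $\mathcal{F}$ is a contraction on the complete space of nonempty compact subsets of $\overline U$ equipped with the Hausdorff metric induced by $d$. By the Banach fixed point theorem there is a unique nonempty compact $A\subset\overline U$ with $\mathcal{F}(A)=A$, and $\lim_{k\to\infty}\mathcal{F}^k(B)=A$ in the $d$-Hausdorff metric for every nonempty compact $B\subset\overline U$.

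Next I would promote this to an attractor in the sense of Definition \ref{attractdef}. Since $d$ and $d_{\mathbb{P}}$ induce the same topology on the compact set $\overline U$, the two Hausdorff metrics on compact subsets of $\overline U$ are also topologically equivalent, so $\mathcal{F}^k(B)\to A$ in the $d_{\mathbb{P}}$-Hausdorff metric as well. It remains to exhibit an open neighbourhood of $A$ (in $\mathbb{P}^n$) that serves as the set $U$ in Definition \ref{attractdef}(ii). Here I would take the open set $U$ itself: $A = \mathcal{F}(A) \subset \mathcal{F}(\overline U) \subset U$, so $A\subset U$; and for any compact $B\subset U\subset\overline U$ the convergence $\mathcal{F}^k(B)\to A$ holds by the previous paragraph. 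This verifies condition (ii) and condition (i) is $\mathcal{F}(A)=A$, already established. So $A$ is an attractor of $\mathcal{F}$.

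Finally, $A$ avoids a hyperplane. The metric $d$ (constructed in Lemma \ref{(3)implies(4)lemma}) lives on $\overline U$ with $U = \cup_i \mathrm{int}(C_i)$ a finite union of interiors of disjoint convex bodies $C_i$, each of which avoids some hyperplane $H_i$; but we do not even need that structure. More directly: statement (4) is already known (via the chain $(1)\Rightarrow(2)\Rightarrow(3)\Rightarrow(4)$ and the fact that what we must prove is the final link) to be equivalent to (3), and in statement (3) each convex body $C_i$ avoids a hyperplane; $A\subset U\subset \cup_i C_i$. However, to keep the argument self-contained at this stage of the paper I would instead argue as follows: we may assume the set $U$ in statement (4) arose from the construction in Lemma \ref{(3)implies(4)lemma}, so $\overline U \subset \cup_i C_i$ and there is a hyperplane $H$ with $\overline U\cap H=\emptyset$ — indeed a single common avoided hyperplane can be extracted since the finitely many $C_i$ are compact and disjoint and one can perturb to a single $H$ missing all of them, or one simply notes $\overline U$ is compact and its complement contains a hyperplane because $\overline U \subset \cup_i C_i \subsetneq \mathbb{P}^n$ is contained in an affine chart. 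Then $A\subset\overline U$ avoids $H$, completing the proof. The main obstacle I anticipate is purely bookkeeping: making sure the open set $U$ used to witness the attractor property and the avoided hyperplane are produced from the data of statement (4) without circular reference to statements (1)–(3), which is handled by carrying along the explicit construction from Lemma \ref{(3)implies(4)lemma}.
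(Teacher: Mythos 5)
Your first half---completeness of $(\overline U,d)$, the Hutchinson/Banach argument giving a unique nonempty compact $A\subset U$ with $\mathcal{F}(A)=A$ and $\mathcal{F}^k(B)\to A$ for all compact $B\subset U$, and the transfer of this convergence to the round metric via topological equivalence---is correct and is essentially how the paper's proof begins. The genuine gap is in the hyperplane-avoidance step. Statement (4) is the \emph{only} hypothesis available, and it says nothing about the open set $U$ beyond the existence of the contractive metric; you are not entitled to ``assume the set $U$ in statement (4) arose from the construction in Lemma \ref{(3)implies(4)lemma}'', because that presupposes statement (3). In the scheme $(1)\Rightarrow(2)\Rightarrow(3)\Rightarrow(4)\Rightarrow(1)$ the implication $(4)\Rightarrow(1)$ must hold for an \emph{arbitrary} $U$ as in (4); an argument valid only for sets $U$ manufactured from (3) proves $(3)\Rightarrow(1)$ and leaves statement (4) outside the equivalence (knowing $(3)\Rightarrow(4)$ does not give $(4)\Rightarrow(3)$, which is exactly what is at stake). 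Your fallback claims are also false as stated: a compact proper subset of $\mathbb{P}^n$ need not lie in an affine chart (a closed neighborhood of a line in $\mathbb{P}^2$ meets every hyperplane), and a finite union of convex bodies each avoiding some hyperplane need not avoid a \emph{common} hyperplane, so ``perturbing to a single $H$ missing all of them'' is not automatic; indeed statement (3) itself does not assert a common avoided hyperplane for $\cup_i C_i$.

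The paper closes this step with an argument intrinsic to (4). Fix any $f\in\mathcal{F}$. Being a $d$-contraction on the complete space $\overline U$, $f$ has a fixed point $x_f$, which is an attractive fixed point of the projective transformation $f$; the Jordan canonical form discussion preceding the lemma then supplies an $f$-invariant hyperplane $H_f$ with $x_f\notin H_f$ whose complement is the basin of $x_f$ under $f$. First $x_f\in A$, since points of the basin of attraction of $A$ off $H_f$ converge to $x_f$ under iteration of $f$. Then $A\cap H_f=\emptyset$: if $x\in A\cap H_f$, contractivity on $A$ gives $d(f^k(x),x_f)=d(f^k(x),f^k(x_f))\to 0$, while $f^k(x)$ remains in the closed set $H_f$, which misses $x_f$---impossible, since $d$ induces the round topology on $\overline U$. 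Hence $A$ avoids the hyperplane $H_f$. Some argument of this kind, extracting the avoided hyperplane from the contraction data of statement (4) alone, is what your proposal is missing.
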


\begin{proof}
We are assuming statement (4) in Theorem \ref{main} that the IFS $\mathcal{F}$
is contractive on $\overline{U}$ with respect to some metric $d$.. Since
$\overline{U}$ is compact and $({\mathbb{P}}^{n},d_{{\mathbb{P}}})$ is a
complete metric space, $(\overline{U},d)$ is a complete metric space. It is
well known in this case \cite{hutchinson} that $\mathcal{F}$ has an attractor
$A\subset U$. It only remains to show that there is a hyperplane $H$ such that
$A\subset\mathbb{P}^{n}\setminus H$.

Let $f$ be any function in $\mathcal{F}$. Since $f$ is a contraction on
$\overline{U}$, we know by the Banach contraction mapping theorem that $f$ has
an attractive fixed point $x_{f}$. We claim that $x_{f}\in A$. If
$x\in{\mathbb{P}}^{n}\setminus H_{f}$ lies in the basin of attraction of $A$,
then $x_{f}=\lim_{k\rightarrow\infty}f^{k}(x)\in A$. It now suffices to show
that $A\cap H_{f}=\emptyset$. By way of contradiction, assume that $x\in A\cap
H_{f}$. Since $\mathcal{F}$ is contractive on $\overline{U}$, it is
contractive on $A$. Since $x_{f}\in A$, we have $d(f^{k}(x),x_{f}%
)=d(f^{k}(x),f^{k}(x_{f}))\rightarrow0$ as $k\rightarrow\infty$, which is
impossible since $f^{k}(x)\in H_{f}$ and $x_{f}\notin H_{f}$.
\end{proof}

So now we have that Statements (1), (2), (3) and (4) in Theorem \ref{main} are
equivalent. The proof of Lemma~\ref{4implies1} also shows the following.

\begin{corollary}
\label{cor} If $\mathcal{F}$ is a contractive IFS, then each $f\in\mathcal{F}$
has an attractive fixed point $x_{f}$ and an invariant hyperplane $H_{f}$.
\end{corollary}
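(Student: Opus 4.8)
The plan is to read the corollary off the proof of Lemma~\ref{4implies1}, supplemented by the eigenvalue discussion in the paragraph preceding that lemma. First I would unwind ``$\mathcal{F}$ contractive'': by Theorem~\ref{main} it means statement (4) holds, so there is a nonempty open $U\subset\mathbb{P}^{n}$ and a metric $d$ on $\overline{U}$, giving the same topology as $d_{\mathbb{P}}$, for which every $f\in\mathcal{F}$ is a $d$-contraction (say with factor $\alpha<1$); moreover, by statement (1), $\mathcal{F}$ has an attractor $A\subset U$. Fix $f\in\mathcal{F}$. Since $\overline{U}$ is compact and $(\mathbb{P}^{n},d_{\mathbb{P}})$ is complete, $(\overline{U},d)$ is a complete metric space with $f(\overline{U})\subset\overline{U}$ and $f|_{\overline{U}}$ a contraction, so the Banach fixed point theorem supplies a unique fixed point $x_{f}\in\overline{U}$ with $f^{k}(y)\to x_{f}$ for all $y\in\overline{U}$; applying this to a point of $A$ and using $f(A)\subset\mathcal{F}(A)=A$ with $A$ compact gives $x_{f}\in A\subset U$.

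The step I expect to require genuine care---and which the proof of Lemma~\ref{4implies1} dispatches only implicitly---is upgrading this to an \emph{attractive} fixed point in the sense of Section~\ref{(4)implies(1)sec}, a notion phrased in terms of the round metric $d_{\mathbb{P}}$ rather than the auxiliary metric $d$. I would use the fact, already invoked in the proof of Proposition~\ref{Hilbert} (see also Remark~\ref{remark4}), that on a small round ball $B$ about $x_{f}\in U$ the metric $d$ restricts to a scaled Hilbert metric and is therefore bi-Lipschitz equivalent to $d_{\mathbb{P}}$, say with constant $L$. Combining $d(f^{N}(x),f^{N}(y))\le\alpha^{N}d(x,y)$ with this equivalence, and shrinking $B$ suitably so that $f^{N}(B)$ again lies in the bi-Lipschitz region, one obtains $d_{\mathbb{P}}(f^{N}(x),f^{N}(y))\le L^{2}\alpha^{N}\,d_{\mathbb{P}}(x,y)$ on $B$, a genuine $d_{\mathbb{P}}$-contraction once $N$ is large. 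Thus the projective transformation $f^{N}$ has an attractive fixed point, so by the discussion before Lemma~\ref{4implies1} the matrix $L_{f}^{N}$ has a dominant eigenvalue; since the eigenvalues of $L_{f}^{N}$ are the $N$-th powers of those of $L_{f}$, the matrix $L_{f}$ itself has a dominant eigenvalue $\lambda_{0}$ with one-dimensional eigenspace, spanned by some $v$.

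It then remains to assemble the hyperplane, exactly as in the passage preceding Lemma~\ref{4implies1}: letting $E_{f}$ be the $n$-dimensional $L_{f}$-invariant span of the remaining generalized eigenspaces, $H_{f}:=\phi(E_{f})$ is a hyperplane invariant under $f$, with $x_{f}=\phi(v)\notin H_{f}$ and with $\mathbb{P}^{n}\setminus H_{f}$ the basin of the attractive fixed point $x_{f}$ of $f$. Since $f\in\mathcal{F}$ was arbitrary, this proves the corollary. The only delicate ingredient is the metric reconciliation in the second paragraph; the rest is bookkeeping with the Jordan data already recorded in Section~\ref{(4)implies(1)sec}.
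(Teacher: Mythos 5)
Your argument is correct and rests on the same skeleton as the paper's, which proves the corollary simply by pointing back to the proof of Lemma~\ref{4implies1}: Banach's theorem applied to $f$ on the complete space $(\overline{U},d)$ gives the fixed point $x_{f}\in A$, and the paragraph preceding that lemma (dominant eigenvalue via the real Jordan form) supplies the invariant hyperplane $H_{f}$. What you do differently is to insert a metric-reconciliation step: the paper reads the conclusion of Banach's theorem directly as ``attractive fixed point,'' even though that notion is defined via the round metric $d_{\mathbb{P}}$ while the contraction is only with respect to the auxiliary metric $d$; you bridge this by passing to a power $f^{N}$, using the bi-Lipschitz comparison between the Hilbert-type metric and $d_{\mathbb{P}}$, and then transferring the dominant eigenvalue from $L_{f}^{N}$ to $L_{f}$. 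This buys genuine rigor at a point the paper treats implicitly, at the cost of two caveats you should make explicit: (i) an arbitrary metric witnessing statement (4) of Theorem~\ref{main} need only induce the same topology and need not be bi-Lipschitz to $d_{\mathbb{P}}$, so you must take $d$ to be the specific metric built in Proposition~\ref{Hilbert} and Lemma~\ref{(3)implies(4)lemma} (legitimate, since only some witness is needed); and (ii) in descending from a dominant eigenvalue of $L_{f}^{N}$ to one of $L_{f}$, note that the leading eigenvalue of $L_{f}$ must be real and simple (a complex pair or repeated leading eigenvalue would make the top eigenspace of $L_{f}^{N}$ at least two-dimensional), and it may be negative, in which case replace $L_{f}$ by $-L_{f}$, which represents the same projective map. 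Finally, be aware that your argument establishes round-metric contractivity only for $f^{N}$, not for $f$ itself (a nontrivial Jordan block below the leading eigenvalue can make the linearization of $f$ expand some directions in the round metric), so the attractivity of $x_{f}$ for $f$ should be phrased, as the paper in effect does, through contractivity in the metric $d$ or through convergence of iterates on $\mathbb{P}^{n}\setminus H_{f}$; with that reading your proof is complete and matches the paper's intent.
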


\begin{proposition}
\label{good} Let $\mathcal{F}$ be a projective IFS containing at least one map
that has an attractive fixed point. If $\mathcal{F}$ has an attractor $A$,
then $A$ is the unique attractor in $\mathbb{P}^{n}$.
\end{proposition}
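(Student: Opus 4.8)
The plan is to isolate the map $f\in\mathcal F$ that possesses an attractive fixed point and use it to pin down a point common to all attractors. Write $p:=x_f$ for this fixed point, let $H_f$ be the associated invariant hyperplane, and recall from the paragraph preceding Lemma~\ref{4implies1} that the basin of attraction of $p$ under the single map $f$ is exactly $\mathbb{P}^n\setminus H_f$; that is, $f^k(y)\to p$ for every $y\notin H_f$. The argument then has two steps: first I would show that $p$ belongs to \emph{every} attractor of $\mathcal F$, and then I would deduce uniqueness by comparing two attractors on a single compact set.

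For the first step, let $A$ be any attractor of $\mathcal F$, with basin of attraction $\mathcal B(A)$. Since $\mathcal B(A)$ is open and nonempty while the hyperplane $H_f$ has empty interior, I can choose a point $x\in\mathcal B(A)\setminus H_f$. The singleton $\{x\}$ is a compact subset of $\mathcal B(A)$, so by the defining property of the basin, $\mathcal F^k(\{x\})\to A$ in the Hausdorff metric as $k\to\infty$. On the other hand, applying $f$ to $x$ repeatedly shows that $f^k(x)\in\mathcal F^k(\{x\})$ for every $k$, and since $x\notin H_f$ we have $f^k(x)\to p$. As $A$ is closed and the distance from $p$ to $A$ is at most $d_{\mathbb{P}}(p,f^k(x))$ plus the distance from $f^k(x)$ to $A$ — both of which tend to $0$ — it follows that $p\in A$.

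For the second step, suppose $A_1$ and $A_2$ are attractors, with basins $\mathcal B(A_1)$ and $\mathcal B(A_2)$. By the first step, $p\in A_1\cap A_2\subset\mathcal B(A_1)\cap\mathcal B(A_2)$, and this intersection is open, so I can fix a compact set $K$ (say the closure of a small round ball about $p$) with $K\subset\mathcal B(A_1)\cap\mathcal B(A_2)$. Then $\mathcal F^k(K)$ is a sequence of nonempty compact sets converging to $A_1$ and also to $A_2$; since the Hausdorff metric on nonempty compact subsets of $\mathbb{P}^n$ is a genuine metric, limits are unique, and therefore $A_1=A_2$. I do not expect a serious obstacle in this argument; its only real content is the first step — the observation that the attractive fixed point $p$ is trapped inside every attractor — which is exactly what allows two a priori unrelated attractors to be tested against one and the same compact set.
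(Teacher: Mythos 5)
Your proposal is correct and follows essentially the same route as the paper: the core step in both is to pick a basin point off the invariant hyperplane $H_f$ and iterate $f$ to show the attractive fixed point lies in every attractor, and then to use the fact that attractors whose basins share a point must coincide. The only difference is organizational (you argue directly via a common compact set around $p$, the paper argues by contradiction from disjointness of the attractors), which does not change the substance.
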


\begin{proof}
Assume that there are two distinct attractors $A$, $A^{\prime}$, and let $U,$
$U^{\prime}$ be their respective basins of attraction. If $U\cap U^{\prime
}\neq\emptyset$, then $A=A^{\prime}$, because if there is $x\in U\cap
U^{\prime}$ then $A^{\prime}=\lim_{k\rightarrow\infty}\mathcal{F}^{k}(x)=A,$
where the limit is with respect to the Hausdorff metric. Therefore $U\cap
U^{\prime}=\emptyset$ and $A\cap A^{\prime}=\emptyset$.

If $f\in\mathcal{F}$ has an attractive fixed point $p_{f}$ and $p\in
U\setminus H_{f},$ and $p^{\prime}\in U^{\prime}\setminus H_{f}$, then both
\begin{align*}
p_{f}  &  =\lim_{k\rightarrow\infty}f^{k}(p)\subseteq\lim\mathcal{F}%
^{k}(p)=A,\text{ and}\\
p_{f}  &  =\lim_{k\rightarrow\infty}f^{k}(p^{\prime})\subseteq\lim
\mathcal{F}^{k}(p^{\prime})=A^{\prime}.
\end{align*}
But this is impossible since $A\cap A^{\prime}=\emptyset$. So Proposition
\ref{good} is proved.
\end{proof}

We can now prove Theorem~\ref{uniquethm} - that a projective IFS has at most
one attractor.

\begin{proof}
[\textbf{Proof of Theorem 2}]Assume, by way of contradiction, that $A$ and
$A^{\prime}$ are distinct attractors of ${\mathcal{F}}$ in ${\mathbb{P}}^{n}$.
As in the proof of Proposition \ref{good}, it must be the case that $A\cap
A^{\prime}=\emptyset$ and hence that their respective basins of attraction are disjoint.

By Lemma \ref{(1)implies(2)} there exist open sets $U$ and $U^{\prime}$ such
that $A\subset U,\,A^{\prime}\subset U^{\prime},$ and ${\mathcal{F}}%
(\overline{U})\subset U$ and ${\mathcal{F}}(\overline{U^{\prime}})\subset
U^{\prime}$. Since $\overline{U}$ belongs to the basin of attraction of $A$
and $\overline{U^{\prime}}$ belongs to the basin of attraction of $A^{\prime
},$ we have $U\cap U^{\prime}=\emptyset.$ If $f\in{\mathcal{F}}$ and $x\in U$,
then in the Hausdorff topology
\[
A(x):=\lim_{k\rightarrow\infty}\overline{\cup_{m\geq k}f^{m}(x)}\subset A
\]
and $A(x)$ is nonempty. Similarly, if $x^{\prime}\in U^{\prime}$, then
\[
A(x^{\prime}):=\lim_{k\rightarrow\infty}\overline{\cup_{m\geq k}%
f^{m}(x^{\prime})}\subset A^{\prime}%
\]
and $A(x^{\prime})$ is nonempty.

Let $L_{f}$ be a matrix for $f\in{\mathcal{F}}$ in real Jordan canonical form
and such that the largest modulus of an eigenvalue is $1$. Let $W$ denote the
$L_{f}$-invariant subspace of ${\mathbb{R}}^{n+1}$ corresponding to the
eigenvalues of modulus $1$, and let $L$ denote the restriction of $L_{f}$ to
$W$. If $E$ is the subspace of ${\mathbb{P}}^{n}$ corresponding to the
subspace $W$ of ${\mathbb{R}}^{n+1}$, then, by use of the Jordan canonical
form, $A(x)\subset E$ and $A(x^{\prime})\subset E$. Together with the
inclusions above, this implies that $A\cap E\neq\emptyset$ and $A^{\prime}\cap
E\neq\emptyset$. Hence $U_{E}:=U\cap E\neq\emptyset$ and $U_{E}^{\prime
}:=U^{\prime}\cap E\neq\emptyset$ and if $f|_{E}$ denotes the restriction of
$f$ to $E$, then
\begin{equation}
f|_{E}(\overline{U_{E}})=f(\overline{U}\cap E)=f(\overline{U})\cap E\subset
U\cap E=U_{E}, \label{restriction-equation}%
\end{equation}
and similarly $f|_{E}(\overline{U_{E}^{\prime}})\subset U_{E}^{\prime}.$

Each Jordan block of $L$ can have one of the following forms%

\[
(a) \quad%
\begin{pmatrix}
\lambda & 0 & \cdots & 0\\
0 & \lambda & \cdots & 0\\
&  & \ddots & \\
0 & 0 & \cdots & \lambda
\end{pmatrix}
\qquad\qquad(b) \quad%
\begin{pmatrix}
R & {\mathbf{0}} & \cdots & {\mathbf{0}}\\
{\mathbf{0}} & R & \cdots & {\mathbf{0}}\\
&  & \ddots & \\
{\mathbf{0}} & {\mathbf{0}} & \cdots & R
\end{pmatrix}
\]
\vskip 3mm%

\[
(c) \quad%
\begin{pmatrix}
\lambda & 1 & 0 & \cdots & 0\\
0 & \lambda & 1 & \cdots & 0\\
&  & \ddots &  & \\
0 & 0 & 0 & \cdots & \lambda
\end{pmatrix}
\qquad\qquad(d) \quad%
\begin{pmatrix}
R & I & {\mathbf{0}} & \cdots & {\mathbf{0}}\\
{\mathbf{0}} & R & I & \cdots & {\mathbf{0}}\\
&  & \ddots &  & \\
{\mathbf{0}} & {\mathbf{0}} & {\mathbf{0}} & \cdots & R
\end{pmatrix}
\]
\vskip 2mm

\noindent where $R$ is a rotation matrix of the form $%
\begin{pmatrix}
\cos\theta & -\sin\theta\\
\sin\theta & \cos\theta
\end{pmatrix}
$, ${\mathbf{0}}$ denotes the $2\times2$ zero matrix, and $I$ denotes the
$2\times2$ identity matrix. Let $V_{W} = \phi^{-1}(U_{E})$ and $V^{\prime} =
\phi^{-1}(U^{\prime})$. \vskip 2mm

Case 1. $L\,:\,W\rightarrow W$ is an isometry. This is equivalent to saying
that each Jordan block of $L$ is of type (a) or (b). The fact that $|\det
L|=1$, and regarding $L$ as acting on the unit sphere in $W$, implies that
$L(\overline{V_{W}})\subset{V_{W}}$ is not possible unless $V_{W}=E$, which in
turn implies that $f|_{E}(\overline{U_{E}})\subset U_{E}$ is not possible
unless $U_{E}=E$. Therefore, by equation (\ref{restriction-equation}) we have
$U_{E}=E$ and similarly $U_{E}^{\prime}=E$, which implies that $U\cap
U^{\prime}\neq\emptyset$, contradicting what was stated above. \vskip2mm

Case 2. There is at least one Jordan block in $L_{E}$ of the form (c) or (d).
Define the \textit{size} of an $m\times m$ Jordan block $B$ as $m$ if $B$ is
of type (c) and $m/2$ if $B$ is of type (d). Let $s$ be the maximum of the
sizes of all (c) and (d) type Jordan blocks. Let $\widehat{W}$ be the subspace
of ${\mathbb{R}}^{n+1}$ consisting of all points $(x_{0}x_{1},\dots,x_{n})$ in
homogeneous coordinates with $x_{i}=0$ for all $i$ not corresponding to the
first row of a Jordan block of type (c) and size $s$ or to the first two rows
of a Jordan block of type (d) and size $s$. Let $\widehat{E}$ be the
projective subspace corresponding to $\widehat{W}$. If $x\in U$, then it is
routine to check, by iterating the Jordan canonical form and scaling so that
the maximum modulus of an eigenvalue is 1, that $A(x)\subset\widehat{E}$.
Similarly, if $x^{\prime}\in U^{\prime}$, then $A(x^{\prime})\subset
\widehat{E}$. Therefore $U_{\widehat{E}}:=U\cap\widehat{E}\neq\emptyset$ and
$U_{\widehat{E}}^{\prime}:=U^{\prime}\cap\widehat{E}\neq\emptyset$. As done
above for $E$, if $f|_{\widehat{E}}$ denotes the restriction of $f$ to
$\widehat{E}$, then $f|_{\widehat{E}}(\overline{U_{\widehat{E}}})\subset
U_{\widehat{E}},$ and $f|_{\widehat{E}}(\overline{U_{\widehat{E}}^{\prime}%
})\subset U_{\widehat{E}}^{\prime}.$ But $\widehat{W}$ is invariant under $L$
and, if $\widehat{L}$ is the restriction of $L$ to $\widehat{W}$, then
$\widehat{L}$ is an isometry. We now arrive at a contradiction exactly as was
done in Case 1.
\end{proof}

\section{\label{dual}Duals and Adjoints}

Recall that $d_{\mathbb{P}}(\cdot,\cdot)$ is the metric on ${\mathbb{P}}^{n}$
defined in Section \ref{projsec}. The \textit{hyperplane orthogonal to}
$p\in\mathbb{P}$ is defined and denoted by
\[
p^{\bot}=\{q\in\mathbb{P}^{n}\,:\,q\bot p\}\text{.}%
\]

If $\left(  \mathbb{X},d_{\mathbb{X}}\right)  $ denotes a compact metric space
$\mathbb{X}$ with metric $d_{\mathbb{X}}$, then $\left(  \mathbb{H}%
(\mathbb{X)},h_{\mathbb{X}}\right)  $ denotes the corresponding compact metric
space that consists of the nonempty compact subsets of $\mathbb{X}$ with the
Hausdorff metric $h_{\mathbb{X}}$ derived from $d_{\mathbb{X}}$, defined by
\[
h_{\mathbb{X}}(B,C)=\max\,\{\sup_{b\in B}\inf_{c\in C}d_{\mathbb{X}%
}(b,c),\,\sup_{c\in C}\inf_{b\in B}d_{\mathbb{X}}(b,c)\}
\]
for all $B,C\in\mathbb{H}.$\ It is a standard result that if $\mathcal{F}%
=\left(  \mathbb{X};f_{1},f_{2},...,f_{M}\right)  $\ is a contractive IFS,
then $\mathcal{F}:\mathbb{H}(\mathbb{X)\rightarrow H}(\mathbb{X)}$ is a
contraction with respect to the Hausdorff metric.

\begin{definition}
The \textbf{dual space} $\widehat{{\mathbb{P}^{n}}}$ of {$\mathbb{P}^{n}$} is
the set of all hyperplanes of {$\mathbb{P}^{n}$}, equivalently
$\widehat{{\mathbb{P}^{n}}}=\{p^{\bot}:p\in\mathbb{P}^{n}\}$. The dual space
is endowed with a metric $d_{\widehat{{\mathbb{P}}}}$ defined by
\[
d_{\widehat{{\mathbb{P}}}}(p^{\bot},q^{\bot})=d_{{\mathbb{P}}}(p,q)
\]
for all $p^{\bot},q^{\bot}\in{\mathbb{\widehat{P}}}$. The map $\mathcal{D}%
\,:\,${$\mathbb{P}^{n}$}$\rightarrow\widehat{{\mathbb{P}^{n}}}$ defined by
\[
\mathcal{D}\left(  p\right)  =p^{\bot}%
\]
is called the \textbf{duality map}. The duality map can be extended to a map
$\mathcal{D}\,:\,{\mathbb{H}}(${$\mathbb{P}^{n}$}$)\rightarrow{\mathbb{H}%
}(\widehat{{\mathbb{P}^{n}}})$ between compact subsets of {$\mathbb{P}^{n}$}
and $\widehat{{\mathbb{P}^{n}}}$ in the usual way.
\end{definition}

Given a projective transformation $f$ and any matrix $L_{f}$ representing it,
the matrix $L_{f^{-1}}:=L_{f}^{-1}$ represents the projective transformation
$f^{-1}$ that is the inverse of $f$. In a similar fashion, define the
\textit{adjoint} $f^{t}$ and {the adjoint inverse} transformation $f^{-t}$ as
the projective transformations represented by the matrices
\[
L_{f^{t}}:=L_{f}^{t}\qquad\text{ and }\qquad L_{f^{-t}}:=(L_{f}^{-1}%
)^{t}=(L_{f}^{t})^{-1},
\]
respectively, where $t$ denotes the transpose matrix. It is easy to check that
the adjoint and adjoint inverse are well defined. For a projective IFS
$\mathcal{F}$, the following related iterated function systems will be used in
this section.

\begin{enumerate}
\item The \textit{adjoint of the projective IFS} $\mathcal{F}$ is denoted by
$\mathcal{F}^{t}$ and defined to be
\[
\mathcal{F}^{t}=\left(  \mathbb{P}^{n};f_{1}^{t},f_{2}^{t},...,f_{M}%
^{t}\right)  .
\]

\item The \textit{inverse of the projective IFS} $\mathcal{F}$ is the
projective IFS
\[
\mathcal{F}^{-1}=\left(  \mathbb{P}^{n};f_{1}^{-1},f_{2}^{-1},...,f_{M}%
^{-1}\right)  .
\]

\item If $\mathcal{F}=\left(  \mathbb{P}^{n};f_{1},f_{2},...,f_{M}\right)  $
is a projective IFS then the \textit{corresponding hyperplane IFS} is
\[
\widehat{\mathcal{F}}=(\widehat{{\mathbb{P}^{n}}};f_{1},f_{2},...,f_{M}),
\]
where $f_{m}:\widehat{{\mathbb{P}^{n}}}\rightarrow\widehat{{\mathbb{P}^{n}}}$
is defined by $f_{m}(H)=\{f_{m}(q)\,|\,q\in H\}$. Notice that, whereas
$\mathcal{F}$ is associated with the compact metric space $(\mathbb{P}%
^{n},d_{\mathbb{P}}),$ the hyperplane IFS $\widehat{\mathcal{F}}$ is
associated with the compact metric space $(\widehat{{\mathbb{P}^{n}}%
},d_{\widehat{{\mathbb{P}}}})$. \vskip2mm

\item The \textit{corresponding inverse hyperplane IFS} is
\[
\widehat{\mathcal{F}^{-1}}=(\widehat{{\mathbb{P}^{n}}};f_{1}^{-1},f_{2}%
^{-1},...,f_{M}^{-1}),
\]
where $f_{m}^{-1}:\widehat{{\mathbb{P}^{n}}}\rightarrow\widehat{{\mathbb{P}%
^{n}}}$ is defined by $f_{m}^{-1}(H)=\{f_{m}^{-1}(q)\,|\,q\in H\}$.
\end{enumerate}

\begin{proposition}
\label{compactH} The duality map $\mathcal{D}$ is a continuous, bijective,
inclusion preserving isometry between compact metric spaces $\left(
\mathbb{P}^{n},d_{{\mathbb{P}}}\right)  $ and $\left(  \widehat{{\mathbb{P}%
^{n}}},d_{\widehat{{\mathbb{P}}}}\right)  $ and also a continuous, bijective,
inclusion preserving isometry between $\left(  {\mathbb{H}}(\mathbb{P}%
^{n}),h_{{\mathbb{P}}}\right)  $ and $\left(  {\mathbb{H}}%
(\widehat{{\mathbb{P}^{n}}}),h_{\widehat{{\mathbb{P}}}}\right)  $. Moreover,
the following diagrams commute for any projective transformation $f$ and any
projective IFS $\mathcal{F}$:
\[%
\begin{array}
[c]{ccc}
& \mathcal{D} & \\
{\mathbb{P}^{n}} & \rightarrow & \widehat{{\mathbb{P}^{n}}}\\
f^{t}\downarrow &  & \downarrow f^{-1}\\
{\mathbb{P}^{n}} & \rightarrow & \widehat{{\mathbb{P}^{n}}}\\
& \mathcal{D} &
\end{array}
\qquad\qquad%
\begin{array}
[c]{ccc}
& \mathcal{D} & \\
{\mathbb{H}}({\mathbb{P}^{n}}) & \rightarrow & {\mathbb{H}}%
(\widehat{{\mathbb{P}^{n}}})\\
\mathcal{F}^{t}\downarrow &  & \downarrow\widehat{\mathcal{F}^{-1}}\\
{\mathbb{H}}({\mathbb{P}^{n}}) & \rightarrow & {\mathbb{H}}%
(\widehat{{\mathbb{P}^{n}}}).\\
& \mathcal{D} &
\end{array}
\]

\end{proposition}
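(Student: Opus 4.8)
The plan is to dispatch the point map first, then lift everything to hyperspaces, and finally verify the two commuting diagrams by a short homogeneous-coordinate computation.

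First I would observe that $\mathcal{D}$ is well defined and bijective essentially by fiat: the assignment $p\mapsto p^{\bot}$ sends a point of $\mathbb{P}^{n}$ (a line through the origin in $\mathbb{R}^{n+1}$) to its orthogonal complement (an $n$-dimensional subspace, hence a hyperplane), and $\widehat{{\mathbb{P}^{n}}}$ was \emph{defined} to be $\{p^{\bot}:p\in\mathbb{P}^{n}\}$, so surjectivity is immediate; injectivity follows because orthogonal complementation is an involution on linear subspaces of $\mathbb{R}^{n+1}$. That $\mathcal{D}$ is an isometry is again immediate from the definition $d_{\widehat{{\mathbb{P}}}}(p^{\bot},q^{\bot}):=d_{{\mathbb{P}}}(p,q)$; being a bijective isometry it is automatically a homeomorphism with (isometric, hence continuous) inverse. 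Inclusion preservation is vacuous at the point level and becomes the trivial implication that $B\subset C$ gives $\mathcal{D}(B)\subset\mathcal{D}(C)$ once $\mathcal{D}$ is extended to compact sets by $\mathcal{D}(B)=\{p^{\bot}:p\in B\}$, which is compact as the continuous image of a compact set.

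Next I would invoke the standard fact that a bijective isometry $\psi:X\rightarrow Y$ between metric spaces induces a bijective isometry ${\mathbb{H}}(X)\rightarrow{\mathbb{H}}(Y)$, $B\mapsto\psi(B)$, with respect to the Hausdorff metrics: the equality $h_{Y}(\psi(B),\psi(C))=h_{X}(B,C)$ reads off directly from the definition of the Hausdorff metric together with $d_{Y}(\psi(b),\psi(c))=d_{X}(b,c)$, and the inverse map is the one induced by $\psi^{-1}$. Applying this with $\psi=\mathcal{D}$ yields the second assertion of the proposition.

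For the first diagram I would fix $p=\phi(v)\in\mathbb{P}^{n}$ and a representative matrix $L_{f}$ and compute both composites in homogeneous coordinates. On one side, $(f^{t}(p))^{\bot}=(\phi(L_{f}^{t}v))^{\bot}=\{\phi(w):\langle L_{f}^{t}v,w\rangle=0\}$. On the other, $f^{-1}(p^{\bot})=\{f^{-1}(q):q\in p^{\bot}\}=\{\phi(L_{f}^{-1}u):\langle v,u\rangle=0\}$, and the change of variables $w=L_{f}^{-1}u$ turns this into $\{\phi(w):\langle v,L_{f}w\rangle=0\}$. The adjoint identity $\langle v,L_{f}w\rangle=\langle L_{f}^{t}v,w\rangle$ shows the two sets coincide, and invertibility of $L_{f}$ guarantees the common set really is a hyperplane; since scaling $L_{f}$ changes nothing, the computation is independent of the chosen representative. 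The second diagram then follows formally: $\mathcal{D}$ commutes with set-images and with unions, so for $B\in{\mathbb{H}}(\mathbb{P}^{n})$ one gets $\mathcal{D}(\mathcal{F}^{t}(B))=\bigcup_{m}\mathcal{D}(f_{m}^{t}(B))=\bigcup_{m}f_{m}^{-1}(\mathcal{D}(B))=\widehat{\mathcal{F}^{-1}}(\mathcal{D}(B))$, the middle equality being the first diagram applied to each $f_{m}$ pointwise. The only point needing care, rather than a genuine obstacle, is bookkeeping: the symbol $f^{-1}$ denotes the point transformation in one slot of the diagram and the induced hyperplane transformation in another, and one must keep $\mathbb{P}^{n}$-homogeneous coordinates distinct from honest vectors in $\mathbb{R}^{n+1}$ and confirm that each construction descends past the scalar ambiguity; everything else is a one-line verification.
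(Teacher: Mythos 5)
Your proposal is correct and follows essentially the same route as the paper: the bijectivity, isometry, and continuity statements are read off directly from the definitions of $d_{\widehat{{\mathbb{P}}}}$ and $h_{\widehat{{\mathbb{P}}}}$ (with the Hausdorff-metric lift being the standard induced-isometry fact), and both diagrams reduce to the single homogeneous-coordinate identity $L_{f}^{-1}(x^{\bot})=[L_{f}^{t}(x)]^{\bot}$ proved via $\langle x,L_{f}z\rangle=\langle L_{f}^{t}x,z\rangle$, exactly as in the paper. Your extra bookkeeping (scalar-ambiguity independence, extending the point diagram to the set diagram via unions, compactness of images) only makes explicit what the paper leaves as ``clear.''
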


\begin{proof}
Clearly $\mathcal{D}$ maps {$\mathbb{P}^{n}$} bijectively onto
$\widehat{{\mathbb{P}^{n}}}$ and ${\mathbb{H}}(${$\mathbb{P}^{n}$}$)$
bijectively onto ${\mathbb{H}}(\widehat{{\mathbb{P}^{n}}})$. The continuity of
$\mathcal{D}$ and the inclusion preserving property are also clear. The
definition of $d_{\widehat{{\mathbb{P}}}}$ in terms of $d_{\mathbb{P}}$
implies that $\mathcal{D}$ is an isometry from {$\mathbb{P}^{n}$} onto
$\widehat{{\mathbb{P}^{n}}}$. The definition of $h_{\widehat{{\mathbb{P}}}}$
in terms of $d_{\widehat{{\mathbb{P}}}}$ and the definition of $h_{\mathbb{P}%
^{n}}$ in terms of $d_{\mathbb{P}}$ implies that $\mathcal{D}$ is an isometry
from $\mathbb{H}(\mathbb{P}^{n})$ onto $\mathbb{H}(\widehat{{\mathbb{P}^{n}}%
})$. The compactness of $\left(  \mathbb{P}^{n},d_{\mathbb{P}}\right)  $
implies that $(\widehat{{\mathbb{P}^{n}}},d_{\widehat{{\mathbb{P}}}})$ is a
compact metric space.

To verify that the diagrams commute it is sufficient to show that, for all
$x\in${$\mathbb{P}^{n}$} and any projective transformation $f$, we have
$L_{f}^{-1}(x^{\bot})=[L_{f}^{t}(x)]^{\bot}$. But, using the ordinary
Euclidean inner product,
\[%
\begin{array}
[c]{rl}%
L_{f}^{-1}(x^{\bot})\;= & \{L_{f}^{-1}y\,:\,\langle x,y\rangle
=0\}=\{z\,:\,\langle x,L_{f}z\rangle=0\}\\
= & \{z\,:\,\langle L_{f}^{t}x,z\rangle=0\}=[L_{f}^{t}(x)]^{\bot}.
\end{array}
\]

\end{proof}

Let ${\mathbb{S}}(${$\mathbb{P}^{n}$}$)$ denote the set of all subsets of
{$\mathbb{P}^{n}$} (including the empty set). \vskip 2mm

\begin{definition}
The \textbf{complementary dual} of a set $X\subset\mathbb{P}^{n}$ is
\[
X^{\ast}=\{q\in\mathbb{P}^{n}:q\bot x\,\text{ for no }x\in X\},
\]

\end{definition}

\vskip 2mm

For an IFS $\mathcal{F}$ define the operator $\overline{\mathcal{F}%
}:{\mathbb{S}}(${$\mathbb{P}^{n}$}$)\rightarrow\mathbb{S}(${$\mathbb{P}^{n}$%
}$)$ by
\[
\overline{\mathcal{F}}(X)=\bigcap_{f\in\mathcal{F}}\,f^{-t}(X),
\]
for any $X\in\mathbb{S}(${$\mathbb{P}^{n}$}$)$.

\begin{proposition}
\label{symbolic} The map $^{\ast} : \, \mathbb{S}(\mathbb{P}^{n})
\rightarrow\mathbb{S}(\mathbb{P}^{n})$ is an inclusion reversing function with
these properties:

1. The following diagram commutes
\[%
\begin{array}
[c]{ccc}
& {\ast} & \\
\mathbb{S} (\mathbb{P}^{n}) & \rightarrow & \mathbb{S}({\mathbb{P}^{n}})\\
\mathcal{F}\downarrow &  & \downarrow\overline{\mathcal{F}}\\
\mathbb{S}({\mathbb{P}^{n}}) & \rightarrow & \mathbb{S}({\mathbb{P}^{n}}).\\
& {\ast} &
\end{array}
\]

2. If $\mathcal{F}(X)\subset Y$, then $\mathcal{F}^{t}(Y^{\ast})\subset
X^{\ast}$.

3. If $X$ is open, then $X^{\ast}$ is closed. If $X$ is closed, then $X^{\ast
}$ is open.

4. $\overline{\overline{X}^{\ast}}\subset X^{\ast}$.
\end{proposition}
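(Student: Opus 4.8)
The plan is to reduce everything to the explicit description $X^{\ast}=\mathbb{P}^{n}\setminus\bigcup_{x\in X}x^{\bot}$, to the single identity
\[
q\bot f(x)\iff f^{t}(q)\bot x\qquad(f\text{ a projective transformation}),
\]
which comes from $\langle q,L_{f}x\rangle=\langle L_{f}^{t}q,x\rangle$, and to the fact recorded in Proposition \ref{compactH} that $f^{-t}=(f^{t})^{-1}$ as projective transformations. Inclusion reversal of $^{\ast}$ is immediate: if $X\subseteq Y$, then a point orthogonal to no element of $Y$ is orthogonal to no element of $X$.

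For part 1 I would chase an element. For $q\in\mathbb{P}^{n}$, since $\mathcal{F}(X)=\bigcup_{f\in\mathcal{F}}f(X)$,
\begin{align*}
q\in(\mathcal{F}(X))^{\ast}&\iff q\not\bot f(x)\ \text{ for all }f\in\mathcal{F},\ x\in X\\
&\iff f^{t}(q)\not\bot x\ \text{ for all }f\in\mathcal{F},\ x\in X,
\end{align*}
and the last condition says $f^{t}(q)\in X^{\ast}$, i.e. $q\in(f^{t})^{-1}(X^{\ast})=f^{-t}(X^{\ast})$, for every $f$; that is, $q\in\bigcap_{f}f^{-t}(X^{\ast})=\overline{\mathcal{F}}(X^{\ast})$. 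Part 2 is then formal: from $\mathcal{F}(X)\subseteq Y$ and inclusion reversal, $Y^{\ast}\subseteq(\mathcal{F}(X))^{\ast}=\bigcap_{f}f^{-t}(X^{\ast})$, so $Y^{\ast}\subseteq f^{-t}(X^{\ast})$ for each $f$; applying the bijection $f^{t}$ gives $f^{t}(Y^{\ast})\subseteq X^{\ast}$, and taking the union over $f\in\mathcal{F}$ gives $\mathcal{F}^{t}(Y^{\ast})\subseteq X^{\ast}$.

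For part 3 the key object is the incidence set $I=\{(q,x)\in\mathbb{P}^{n}\times\mathbb{P}^{n}:q\bot x\}$, which is closed since it is cut out by the well-defined vanishing condition $\langle q,x\rangle=0$. Now $\mathbb{P}^{n}\setminus X^{\ast}$ is the first-coordinate projection of $I\cap(\mathbb{P}^{n}\times X)$, and $\mathbb{P}^{n}$ is compact, so if $X$ is closed then $\mathbb{P}^{n}\setminus X^{\ast}$ is closed and $X^{\ast}$ is open. For the converse I would use sequences: if $X$ is open, $q_{k}\to q$ with each $q_{k}\in X^{\ast}$, and $q\bot x_{0}$ for some $x_{0}\in X$, then orthogonally projecting a unit representative of $x_{0}$ onto the hyperplane $q_{k}^{\bot}\subset\mathbb{R}^{n+1}$ and normalizing yields $x_{k}\to x_{0}$ with $q_{k}\bot x_{k}$; openness of $X$ forces $x_{k}\in X$ for large $k$, contradicting $q_{k}\in X^{\ast}$. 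Hence $X^{\ast}$ is closed. Part 4 then follows quickly: $(\overline{X})^{\ast}\subseteq X^{\ast}$ by inclusion reversal, and $X^{\ast}$ is closed by part 3 (applied to the open set $X$), so $\overline{(\overline{X})^{\ast}}\subseteq\overline{X^{\ast}}=X^{\ast}$.

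I expect the main obstacle to be the topology behind part 3 (and hence part 4): checking that $\bot$ is a closed relation, and especially the orthogonal-projection device that turns a convergent sequence $q_{k}\to q$ with $q\bot x$ into points $x_{k}\to x$ with $q_{k}\bot x_{k}$. I would isolate that device as a preliminary continuity lemma for the incidence relation, since the entire topological half of the proposition rests on it, whereas parts 1 and 2 are pure bookkeeping once the identity $q\bot f(x)\iff f^{t}(q)\bot x$ is in hand. (Note that part 4 as sketched uses that $X^{\ast}$ is closed, which part 3 supplies precisely for the open sets --- basins of attraction and the like --- to which it is applied.)
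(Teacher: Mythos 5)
Your proposal is correct, and on the one part the paper proves in detail it coincides with the paper's argument: statement (2) is obtained formally from inclusion reversal plus the commuting diagram (your part 2), while (1), (3), (4) are declared easy to check. You supply exactly the omitted details: the element chase for (1) via $\langle q,L_{f}x\rangle=\langle L_{f}^{t}q,x\rangle$, and for (3) the closedness of the incidence relation $\{(q,x):q\bot x\}$ together with compactness of $\mathbb{P}^{n}$ for the closed-implies-open direction, and the project-and-renormalize sequence argument for the open-implies-closed direction; both are sound, since for large $k$ the projection of a unit representative of $x_{0}$ onto $q_{k}^{\bot}$ has norm close to $1$, so the normalization is legitimate and the resulting points converge to $x_{0}$. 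Your parenthetical remark about (4) is more than a footnote: for arbitrary $X$ the inclusion $\overline{\overline{X}^{\ast}}\subset X^{\ast}$ is actually false --- take $X$ a single point, so that $X^{\ast}$ is the dense open complement of a hyperplane and the left-hand side is all of $\mathbb{P}^{n}$ --- so the closedness of $X^{\ast}$, which your part 3 provides precisely when $X$ is open, is genuinely needed, and that is the only form in which the paper invokes (4) (with $X=U$ open, in Proposition \ref{inclusion} and Lemma \ref{(1)equiv(5)}). One further small point in your favor: the paper's proof announces that it establishes statement (3), but the argument it displays is the derivation of statement (2); your write-up proves each numbered claim under its correct label.
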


\begin{proof}
The fact that the diagrams commute is easy to verify. Since the other
assertions are also easy to check, we prove only statement (3). Since $^{\ast
}$ is inclusion reversing, $\mathcal{F}(X)\subset Y$ implies that $Y^{\ast
}\subset\lbrack\mathcal{F}(X)]^{\ast} =\overline{\mathcal{F}}(X^{\ast})$, the
equality coming from the commuting diagram. The definition of $\overline
{\mathcal{F}}$ then yields $\mathcal{F}^{t}(Y^{\ast})\subset X^{\ast}$.
\end{proof}

\begin{proposition}
\label{inclusion}If $\mathcal{F}$ is a projective IFS, $U\subset\mathbb{P}%
^{n}$ is open, and $\mathcal{F}(\overline{U})\subset U,$ then $V=\overline
{U}^{\ast}$ is open and $\mathcal{F}^{t}(\overline{V})\subset V$.
\end{proposition}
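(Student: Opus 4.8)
The plan is to exploit Proposition \ref{symbolic}, whose four parts already encode almost everything needed. First I would establish that $V=\overline{U}^{\ast}$ is open: since $\overline{U}$ is closed, part (3) of Proposition \ref{symbolic} gives immediately that $\overline{U}^{\ast}$ is open. So the only real content is the inclusion $\mathcal{F}^{t}(\overline{V})\subset V$.

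To get this inclusion, I would run the following chain. Start from the hypothesis $\mathcal{F}(\overline{U})\subset U$. Since $U\subset \overline{U}$, we also have $\mathcal{F}(\overline U)\subset \overline U$, but more usefully I want to land the image strictly inside $U$ and then pass to closures carefully. Apply part (2) of Proposition \ref{symbolic} with $X=\overline U$ and $Y=U$: from $\mathcal{F}(\overline U)\subset U$ we obtain $\mathcal{F}^{t}(U^{\ast})\subset \overline U^{\ast}=V$. Now I need to replace $U^{\ast}$ on the left by $\overline V$. The key observation is the containment $\overline{V}=\overline{\overline U^{\ast}}\subset U^{\ast}$: indeed $U\subset \overline U$ and $^{\ast}$ is inclusion-reversing, so $\overline U^{\ast}\subset U^{\ast}$; and since $U$ is open, $U^{\ast}$ is closed by part (3), hence $\overline{\overline U^{\ast}}\subset \overline{U^{\ast}}=U^{\ast}$. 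Therefore $\mathcal{F}^{t}(\overline V)\subset \mathcal{F}^{t}(U^{\ast})\subset V$, which is what we want.

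Alternatively, and perhaps more cleanly, one can invoke part (4) directly: $\overline{\overline U^{\ast}}\subset U^{\ast}$ is precisely the statement $\overline V\subset U^{\ast}$, so monotonicity of $\mathcal{F}^{t}$ together with $\mathcal{F}^{t}(U^{\ast})\subset V$ (from part (2)) finishes the argument. I would present it this way, citing part (2) for $\mathcal{F}^{t}(U^{\ast})\subset \overline U^{\ast}$, part (4) for $\overline V\subset U^{\ast}$, and part (3) for openness of $V$.

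The main thing to be careful about — the only place the argument could go wrong — is the direction of the closure/interior passages: one must confirm that $\overline{\overline U^{\ast}}$ really is contained in $U^{\ast}$ and not merely the reverse, and that applying $\mathcal{F}^{t}$ (which is continuous and in particular inclusion-preserving on $\mathbb{S}(\mathbb{P}^{n})$) to the containment $\overline V\subset U^{\ast}$ is legitimate. Since all of these facts are recorded in Proposition \ref{symbolic}, there is no genuine obstacle; the proof is a two-line deduction once the parts of Proposition \ref{symbolic} are lined up in the right order.
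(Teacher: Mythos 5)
Your argument is correct and is essentially the paper's own proof: openness of $V$ from statement (3) of Proposition \ref{symbolic}, the inclusion $\mathcal{F}^{t}(U^{\ast})\subset\overline{U}^{\ast}=V$ from statement (2) applied to $\mathcal{F}(\overline{U})\subset U$, and then $\overline{V}=\overline{\overline{U}^{\ast}}\subset U^{\ast}$ from statement (4) together with monotonicity of $\mathcal{F}^{t}$. Your side remark deriving statement (4) from inclusion reversal and statement (3) is a harmless addition but changes nothing in the route.
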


\begin{proof}
From statement (3) of Proposition \ref{symbolic} it follows that $V$ is open.
From $\mathcal{F}(\overline{U})\subset U$ and from statement (2) of
Proposition \ref{symbolic} it follows that $\mathcal{F}^{t}(U^{\ast}%
)\subset\overline{U}^{\ast}$. By statement (4) we have $\mathcal{F}%
^{t}(\overline{V})=\mathcal{F}^{t}(\overline{\overline{U}^{\ast}}%
)\subset\mathcal{F}^{t}(U^{\ast})\subset\overline{U}^{\ast}=V$.
\end{proof}

\begin{lemma}
\label{(1)equiv(5)} \textbf{[Theorem \ref{main} (1)} $\Leftrightarrow
$\textbf{(5)]:} A projective IFS $\mathcal{F}$ has an attractor $A$ that
avoids a hyperplane if and only if $\mathcal{F}^{t}$ has an attractor $A^{t}$
that avoids a hyperplane.
\end{lemma}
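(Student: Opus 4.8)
The plan is to chain together Lemma~\ref{(1)implies(2)}, Proposition~\ref{inclusion}, and the already-established equivalence $(2)\Rightarrow(1)$ of Theorem~\ref{main}, after recording one elementary fact about complementary duals. Since $(L_f^{t})^{t}=L_f$ we have $(\mathcal{F}^{t})^{t}=\mathcal{F}$, so it suffices to prove a single implication; the reverse implication then follows by applying the same argument with $\mathcal{F}^{t}$ in place of $\mathcal{F}$.

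First I would record the observation that if $X\subset\mathbb{P}^{n}$ is nonempty and avoids a hyperplane, then $X^{\ast}$ is also nonempty and avoids a hyperplane. Indeed, if $H_{p}\cap X=\emptyset$ then $p\bot x$ fails for every $x\in X$, so $p\in X^{\ast}$; and if $x_{0}$ is any fixed point of $X$, then every $q\in X^{\ast}$ satisfies $q\bot x_{0}$ failing, i.e. $q\notin H_{x_{0}}$, so $H_{x_{0}}\cap X^{\ast}=\emptyset$. This is the only genuinely new input needed.

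Now suppose $\mathcal{F}$ has an attractor $A$ that avoids a hyperplane $H$. By Lemma~\ref{(1)implies(2)}(ii) there is a nonempty open set $U$ with $A\subset U$, $\overline{U}\cap H=\emptyset$, and $\mathcal{F}(\overline{U})\subset U$; in particular $\overline{U}$ is nonempty and avoids a hyperplane. By Proposition~\ref{inclusion}, $V:=\overline{U}^{\ast}$ is open and satisfies $\mathcal{F}^{t}(\overline{V})\subset V$, and by the observation above $V$ is nonempty and avoids a hyperplane. Thus $V$ witnesses that statement $(2)$ of Theorem~\ref{main} holds for the projective IFS $\mathcal{F}^{t}$. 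Since statements $(1)$--$(4)$ of Theorem~\ref{main} have already been proved equivalent for every projective IFS, applying $(2)\Rightarrow(1)$ to $\mathcal{F}^{t}$ yields that $\mathcal{F}^{t}$ has an attractor $A^{t}$ that avoids a hyperplane, which is statement $(5)$. The converse direction is the same argument applied to $\mathcal{F}^{t}$, using $(\mathcal{F}^{t})^{t}=\mathcal{F}$.

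The proof is essentially bookkeeping once Propositions~\ref{symbolic} and~\ref{inclusion} are available; the one point requiring care is precisely the preservation, under $X\mapsto X^{\ast}$, of both nonemptiness and the property of avoiding a hyperplane, since without this one would only obtain an attractor of $\mathcal{F}^{t}$ with no control on whether it avoids a hyperplane, and the hypothesis of statement $(2)$ of Theorem~\ref{main} for $\mathcal{F}^{t}$ would fail to be met.
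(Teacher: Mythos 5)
Your overall route is the same as the paper's: Lemma \ref{(1)implies(2)}(ii) gives an open $U$ with $\overline{U}\cap H=\emptyset$ and $\mathcal{F}(\overline{U})\subset U$; Proposition \ref{inclusion} gives the open set $V=\overline{U}^{\ast}$ with $\mathcal{F}^{t}(\overline{V})\subset V$; one then closes the loop by applying the already-proved implications of Theorem \ref{main} to $\mathcal{F}^{t}$, the converse following from $(\mathcal{F}^{t})^{t}=\mathcal{F}$. Your nonemptiness argument (a point $p$ with $H_{p}\cap\overline{U}=\emptyset$ lies in $\overline{U}^{\ast}$) is correct, and is in fact a cleaner, direct version of the paper's argument by contradiction.

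There is, however, one step where what you verify is weaker than what the machinery requires. To run $(2)\Rightarrow(3)\Rightarrow(4)\Rightarrow(1)$ for $\mathcal{F}^{t}$ you need a hyperplane missing the closure $\overline{V}$, not merely $V$: the hypothesis actually used in Lemma \ref{(2)implies(3)A}, which is how the paper proceeds from statement (2), is that the \emph{closure} of the open set avoids a hyperplane (and Lemma \ref{(1)implies(2)}(ii), which you invoke on the $\mathcal{F}$ side, supplies exactly that stronger form there). Your observation, applied to $X=\overline{U}$, only yields $V\cap H_{x_{0}}=\emptyset$ for some $x_{0}\in\overline{U}$, and in general the closure of an open set avoiding a hyperplane need not avoid any hyperplane (consider $\mathbb{P}^{n}\setminus H$). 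The repair is short and is precisely what the paper does: pick $a\in A\subset U$, note $a^{\bot}\cap U^{\ast}=\emptyset$ (your observation applied to the open set $U$), and use Proposition \ref{symbolic}(4), namely $\overline{\overline{U}^{\ast}}\subset U^{\ast}$, to conclude $a^{\bot}\cap\overline{V}=\emptyset$. With that one line added, your proof coincides with the paper's.
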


\begin{proof}
Suppose statement $(1)$ of Theorem \ref{main} is true. By statement $(2)$ of
Theorem \ref{main} there is a nonempty open set $U$ and a hyperplane $H$ such
that $\mathcal{F}(\overline{U})\subset U$ and $H\cap\overline{U}=\emptyset$.
By Proposition \ref{inclusion} we have $\mathcal{F}^{t}(\overline{V})\subset
V$ where $V=\overline{U}^{\ast}$ is open. Moreover, there is a hyperplane
$H^{t}$ such that $H^{t}\cap\overline{V}=\emptyset$: simply choose
$H^{t}=a^{\bot}$ for any $a\in A\subset U$, where $A$ is the attractor of
$\mathcal{F}$. By the definition of the dual complement, $a^{\bot}\cap
U^{\ast}=\emptyset$ which, by statement (4) of Proposition \ref{symbolic},
implies that $a^{\bot}\cap\overline{V}=a^{\bot}\cap\overline{\overline
{U}^{\ast}}=\emptyset$. So, as long as $V\neq\emptyset$, $\mathcal{F}^{t}$
also satisfies statement $(2)$ of Theorem \ref{main}. In this case it follows
that statement $\left(  1\right)  $ of Theorem \ref{main} is true for
$\mathcal{F}^{t}$, and hence statement $(5)$ is true.

We show that $V\neq\emptyset$ by way of contradiction. If $V=\emptyset$, then
by the definition of the dual complement, every $y\in\mathbb{P}^{n}$ is
orthogonal to some point in $\overline{U},$ i.e.%
\[
{\overline{U}}^{\bot}:=\{y\,:\,y\perp x\;\;\mbox{for some}\;\;x\in\overline
{U}\}={\mathbb{P}^{n}}.
\]
On the other hand, since $\overline{U}$ avoids some hyperplane $y^{\bot}$, we
arrive at the contradiction $y\notin\overline{U}^{\bot}$.

The converse in Lemma \ref{(1)equiv(5)} is immediate because $\left(
\mathcal{F}^{t}\right)  ^{t}=\mathcal{F}$.
\end{proof}

\begin{definition}
A set $\mathcal{A}\subset\widehat{{\mathbb{P}^{n}}}$ is called a
\textbf{hyperplane} \textbf{attractor} of the projective IFS $\mathcal{F}$ if
it is an attractor of the IFS $\widehat{\mathcal{F}}.$ A set $R\subset
\mathbb{P}^{n}$ is said to be a \textbf{repeller} of the projective IFS
$\mathcal{F}$ if $R$ is an attractor of the inverse IFS $\mathcal{F}^{-1}.$ A
set $\mathcal{R}\subset\widehat{{\mathbb{P}^{n}}}$ is said to be a
\textbf{hyperplane} \textbf{repeller} of the projective IFS $\mathcal{F}$ if
it is a hyperplane attractor of the inverse hyperplane IFS
$\widehat{\mathcal{F}^{-1}}$.
\end{definition}

\begin{proposition}
\label{dualIFSlemma} The compact set $A\subset\mathbb{P}^{n}$ is an attractor
of the projective IFS $\mathcal{F}^{t}$ that avoids a hyperplane if and only
if $\mathcal{D}(A)$ is a hyperplane repeller of $\mathcal{F}$ that avoids a point.
\end{proposition}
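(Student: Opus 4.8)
The plan is to translate the statement about $\mathcal{F}^{t}$ into a statement about $\mathcal{F}$ using the commuting diagram of Proposition \ref{compactH}, which says $\mathcal{D}\circ\mathcal{F}^{t}=\widehat{\mathcal{F}^{-1}}\circ\mathcal{D}$ on ${\mathbb{H}}({\mathbb{P}^{n}})$, together with the fact that $\mathcal{D}$ is an inclusion-preserving isometry between the compact metric spaces $({\mathbb{H}}({\mathbb{P}^{n}}),h_{{\mathbb{P}}})$ and $({\mathbb{H}}(\widehat{{\mathbb{P}^{n}}}),h_{\widehat{{\mathbb{P}}}})$. Since $\mathcal{D}$ is an isometric bijection intertwining the two set maps, it carries attractors to attractors and basins to basins: $A$ is an attractor of $\mathcal{F}^{t}$ if and only if $\mathcal{D}(A)$ is an attractor of $\widehat{\mathcal{F}^{-1}}$, i.e. (by the definition just given) a hyperplane repeller of $\mathcal{F}$. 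So the only real content beyond this transport-of-structure is the correspondence between ``$A$ avoids a hyperplane'' (a statement about $A\subset{\mathbb{P}^{n}}$) and ``$\mathcal{D}(A)$ avoids a point'' (a statement about $\mathcal{D}(A)\subset\widehat{{\mathbb{P}^{n}}}$).

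First I would record precisely what it means for a set of hyperplanes to ``avoid a point'': $\mathcal{A}\subset\widehat{{\mathbb{P}^{n}}}$ avoids a point $p$ if $p\notin H$ for every $H\in\mathcal{A}$. Now observe that a hyperplane $H\subset{\mathbb{P}^{n}}$ is of the form $H=q^{\bot}$ for a unique $q\in{\mathbb{P}^{n}}$, and $p\in q^{\bot}$ if and only if $p\bot q$. Thus, for a compact set $A\subset{\mathbb{P}^{n}}$, the image $\mathcal{D}(A)=\{a^{\bot}:a\in A\}$ avoids the point $p$ if and only if $p\bot a$ for no $a\in A$, which is exactly the condition $p\in A^{\ast}$ in the notation of the definition of the complementary dual. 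Hence $\mathcal{D}(A)$ avoids a point if and only if $A^{\ast}\neq\emptyset$. On the other hand, $A$ avoids a hyperplane $y^{\bot}$ if and only if $A\cap y^{\bot}=\emptyset$, i.e. $y\bot a$ for no $a\in A$, i.e. $y\in A^{\ast}$. So ``$A$ avoids a hyperplane'' and ``$\mathcal{D}(A)$ avoids a point'' are literally the same condition, namely $A^{\ast}\neq\emptyset$. (This is the same bookkeeping that appears at the end of the proof of Lemma \ref{(1)equiv(5)}, where $A^{\ast}=\emptyset$ was shown to contradict the hyperplane-avoidance of $\overline{U}$.)

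Putting the two halves together: if $A$ is an attractor of $\mathcal{F}^{t}$ that avoids a hyperplane, then $A^{\ast}\neq\emptyset$, so $\mathcal{D}(A)$ avoids a point; and by the commuting diagram and the isometry property of $\mathcal{D}$, $\mathcal{D}(A)$ is an attractor of $\widehat{\mathcal{F}^{-1}}$, hence a hyperplane repeller of $\mathcal{F}$ that avoids a point. Conversely, if $\mathcal{D}(A)$ is a hyperplane repeller of $\mathcal{F}$ that avoids a point, run the same argument backwards using that $\mathcal{D}^{-1}$ is also an inclusion-preserving isometry intertwining $\widehat{\mathcal{F}^{-1}}$ with $\mathcal{F}^{t}$, to conclude $A$ is an attractor of $\mathcal{F}^{t}$, and $A^{\ast}\neq\emptyset$ gives that $A$ avoids a hyperplane. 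I do not anticipate a genuine obstacle here; the one point requiring a little care is making sure that ``attractor of $\widehat{\mathcal{F}}$'' is understood with respect to the metric $d_{\widehat{{\mathbb{P}}}}$ on $\widehat{{\mathbb{P}^{n}}}$ (so that the Hausdorff metrics on the two sides genuinely correspond under $\mathcal{D}$), and that the basin of attraction transports correctly — both of which are immediate from Proposition \ref{compactH} since $\mathcal{D}$ is an isometry, hence a homeomorphism, carrying open neighborhoods of $A$ to open neighborhoods of $\mathcal{D}(A)$.
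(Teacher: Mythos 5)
Your proposal is correct and follows essentially the same route as the paper: both transport the attractor property through the commuting diagram and isometry of $\mathcal{D}$ from Proposition \ref{compactH}, and then check directly that $A$ avoiding the hyperplane $p^{\bot}$ corresponds to $\mathcal{D}(A)$ avoiding the point $p$. Your reformulation of the avoidance condition via $A^{\ast}\neq\emptyset$ is just a slightly more explicit version of the paper's final sentence.
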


\begin{proof}
Concerning the first of the two conditions in the definition of an attractor,
we have from the commuting diagram in Proposition \ref{compactH} that
$\mathcal{F}^{t}(A)=A$ if and only if $\widehat{\mathcal{F}^{-1}}%
(\mathcal{D}(A))=\mathcal{D}(\mathcal{F}^{t}(A))=\mathcal{D}(A)$.

Concerning the second of the two conditions in the definition of an attractor,
let $B$ be an arbitrary subset contained in the basin of attraction $U$ of
$\mathcal{F}^{t}$. With respect to the Hausdorff metric, $\lim_{k\rightarrow
\infty}(\mathcal{F}^{t})^{k}(B)=A$ if and only if
\[
\lim_{k\rightarrow\infty}\widehat{\mathcal{F}^{-1}}^{k}(\mathcal{D}%
(B))=\lim_{k\rightarrow\infty}\mathcal{D}((\mathcal{F}^{t})^{k}%
(B))=\mathcal{D}(\lim_{k\rightarrow\infty}(\mathcal{F}^{t})^{k}%
(B))=\mathcal{D}(A).
\]
Also, the attractor $\mathcal{D}(A)$ of $\widehat{\mathcal{F}^{-1}}$ avoids
the point $p$ if and only if the attractor $A$ of $\mathcal{F}^{t}$ avoids the
hyperplane $p^{\bot}$.
\end{proof}

\begin{lemma}
\label{newlemma} Let $f:\mathbb{P}^{n}\rightarrow\mathbb{P}^{n}$ be a
projective transformation with attractive fixed point $p_{f}$ and
corresponding invariant hyperplane $H_{f}$.\ If $f^{-1}:\widehat{\mathbb{P}%
^{n}}\rightarrow\widehat{\mathbb{P}^{n}}$ has an attractive fixed point
$\widehat{H_{f}}$, then $\widehat{H_{f}}=H_{f}$.
\end{lemma}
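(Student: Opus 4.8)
The plan is to reduce the statement, via the classical duality, to a fact about the dominant eigenvector of the transpose matrix. By the commuting diagram in Proposition~\ref{compactH} we have $f^{-1}\circ\mathcal{D}=\mathcal{D}\circ f^{t}$ as maps on $\widehat{\mathbb{P}^{n}}$, and $\mathcal{D}$ is a bijective isometry between $(\mathbb{P}^{n},d_{\mathbb{P}})$ and $(\widehat{\mathbb{P}^{n}},d_{\widehat{\mathbb{P}}})$. Hence $f^{-1}$ on $\widehat{\mathbb{P}^{n}}$ and $f^{t}$ on $\mathbb{P}^{n}$ are topologically conjugate via $\mathcal{D}$, so the attractive fixed point $\widehat{H_{f}}$ of $f^{-1}$ is $\mathcal{D}(q)=q^{\bot}$, where $q\in\mathbb{P}^{n}$ is the (necessarily unique) attractive fixed point of $f^{t}$. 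It therefore suffices to prove $q^{\bot}=H_{f}$.

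Next I would pin down $q$ in terms of a matrix. Fix $L_{f}$ representing $f$. Because $p_{f}$ is an attractive fixed point, $L_{f}$ has a strictly dominant, simple, real eigenvalue $\lambda_{0}$; write $\overline{p_{f}}$ for its eigenline and $E_{f}$ for the complementary $n$-dimensional $L_{f}$-invariant subspace (the sum of the generalized eigenspaces of the other eigenvalues), so that $H_{f}=\phi(E_{f})$. The transpose $L_{f}^{t}$ represents $f^{t}$ and has the same characteristic polynomial as $L_{f}$, so $\lambda_{0}$ is again its strictly dominant simple eigenvalue. By the real Jordan canonical form argument recalled just before Lemma~\ref{4implies1}, the unique attractive fixed point of $f^{t}$ is $\phi(w)$, where $w$ spans the $\lambda_{0}$-eigenline of $L_{f}^{t}$. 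Thus $q=\phi(w)$ and $q^{\bot}=\phi(w^{\bot})$, with $w^{\bot}$ the Euclidean orthogonal complement of $w$ in $\mathbb{R}^{n+1}$.

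The heart of the argument is the identity $w^{\bot}=E_{f}$. Since $\dim w^{\bot}=n=\dim E_{f}$, it is enough to show $E_{f}\subseteq w^{\bot}$. I would argue via annihilators: the annihilator $E_{f}^{\circ}\subset(\mathbb{R}^{n+1})^{\ast}$ of the $L_{f}$-invariant subspace $E_{f}$ is one-dimensional and invariant under the dual map $L_{f}^{\ast}$, and the induced action of $L_{f}^{\ast}$ on $(\mathbb{R}^{n+1})^{\ast}/E_{f}^{\circ}\cong E_{f}^{\ast}$ has characteristic polynomial equal to that of $L_{f}|_{E_{f}}$, i.e. $\prod_{\mu\neq\lambda_{0}}(x-\mu)$; comparing with the characteristic polynomial of $L_{f}^{\ast}$ (which equals that of $L_{f}$) forces $L_{f}^{\ast}$ to act on $E_{f}^{\circ}$ by $\lambda_{0}$. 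Transporting everything through the inner-product identification $(\mathbb{R}^{n+1})^{\ast}\cong\mathbb{R}^{n+1}$, under which $L_{f}^{\ast}$ becomes $L_{f}^{t}$ and $E_{f}^{\circ}$ becomes $E_{f}^{\bot}$, we conclude that $E_{f}^{\bot}$ is precisely the $\lambda_{0}$-eigenline $\mathbb{R}\,w$ of $L_{f}^{t}$, whence $E_{f}=(E_{f}^{\bot})^{\bot}=w^{\bot}$. Alternatively, one can check directly, by induction on the height of a generalized eigenvector, that $w$ is Euclidean-orthogonal to every generalized eigenvector of $L_{f}$ for an eigenvalue $\neq\lambda_{0}$, passing to $\mathbb{C}$ to handle complex-conjugate pairs.

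Combining the steps, $\widehat{H_{f}}=q^{\bot}=\phi(w^{\bot})=\phi(E_{f})=H_{f}$. The only delicate point I anticipate is the middle step: correctly identifying which eigenvalue of $L_{f}^{t}$ sits on the one-dimensional invariant subspace $E_{f}^{\bot}$ (equivalently $E_{f}^{\circ}$), and, if one prefers the inductive route, handling complex eigenvalues cleanly by working over $\mathbb{C}$ with the bilinear — not Hermitian — form. Everything else is a routine transfer along the duality maps together with an appeal to the dominant-eigenvalue facts already established in the paper.
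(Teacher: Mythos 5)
Your proposal is correct and follows essentially the same route as the paper: both transfer the problem through the duality conjugation of Proposition \ref{compactH}, reducing it to the linear-algebra fact that the dominant eigendirection $w$ of $L_f^{t}$ satisfies $\phi(w^{\bot})=H_f$. The only difference is how that fact is checked: the paper writes $L_f=M\,\mathrm{diag}(L,1)\,M^{-1}$ and computes $\widehat{H_f}=(M^{-t}x)^{\bot}=M(x^{\bot})=H_f$ with $x=(0,\dots,0,1)$, while you verify it basis-free via the annihilator $E_f^{\circ}$ and a characteristic-polynomial comparison, which is equally valid.
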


\begin{proof}
There is some basis with respect to which $f$ has matrix $%
\begin{pmatrix}
L & 0\\
0 & 1
\end{pmatrix}
$. If $f$ is represented by matrix $L_{f}$ with respect to the standard basis,
then there is an invertible matrix $M$ such that
\[
L_{f}=M
\begin{pmatrix}
L & 0\\
0 & 1
\end{pmatrix}
M^{-1},
\]
where $L$ is a non-singular $n\times n$ matrix whose eigenvalues $\lambda$
satisfy $|\lambda|<1$. Then
\[
L_{f}^{-1}=M
\begin{pmatrix}
L^{-1} & 0\\
0 & 1
\end{pmatrix}
M^{-1}\text{ and }L_{f}^{t}=M^{-t}
\begin{pmatrix}
L^{t} & 0\\
0 & 1
\end{pmatrix}
M^{t}.
\]
If $x=(0,0,...,0,1),$ then by Proposition \ref{compactH}
\[
\widehat{H_{f}}=\left(  M^{-t}x\right)  ^{\bot}=M(x^{\bot})=H_{f}.
\]

\end{proof}

\begin{proposition}
\label{open} If $\mathcal{F}$ is a projective IFS and $U$ is an open set such
that $\overline{U}$ avoids a hyperplane and $\mathcal{F}(\overline{U})\subset
U$, then $\mathcal{F}$ has an attractor $A$ and $U$ is contained in the basin
of attraction of $A$.
\end{proposition}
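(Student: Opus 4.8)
The plan is to get the existence and uniqueness of the attractor from Theorems \ref{main} and \ref{uniquethm}, and then to show that the forward orbit of $\overline{U}$ converges to it. Since $\mathcal{F}(\overline{U})\subset U$ and $\overline{U}$ (hence $U$) avoids a hyperplane, statement (2) of Theorem \ref{main} holds with the open set $U$; so $\mathcal{F}$ is contractive and, by Theorem \ref{uniquethm}, has a unique attractor $A$. Moreover, running the argument for $(2)\Rightarrow(3)\Rightarrow(4)$ on this particular $U$ produces, via Lemmas \ref{(2)implies(3)A} and \ref{(2)implies(3)B}, a finite collection of disjoint convex bodies $\{C_i\}$ with $\mathcal{F}(C)\subset int(C)$, where $C:=\bigcup_i C_i$, together with a metric $d$ on $C$, inducing the same topology as $d_{\mathbb{P}}$, with respect to which every $f\in\mathcal{F}$ is a contraction. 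The crucial feature of that construction is that each $C_i$ is a $conv_{H}$ of a connected component of $U$ (possibly amalgamated with others) that meets $\tilde{A}:=\bigcap_{k\geq0}\mathcal{F}^{k}(\overline{U})$; since $\tilde{A}\subset\mathcal{F}(\overline{U})\subset U$ meets only such components, it follows that $\tilde{A}\subset C$. Finally, $(C,d)$ is compact, hence complete, so Lemma \ref{4implies1} applied on $C$, together with Theorem \ref{uniquethm}, identifies $A$ with the unique fixed point of the contraction $\mathcal{F}:\mathbb{H}(C)\to\mathbb{H}(C)$; in particular $A\subset C$ and $\mathcal{F}^{k}(B)\to A$ in the Hausdorff metric for every nonempty compact $B\subset C$.

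Next I would locate $\tilde{A}$ precisely. Because $\mathcal{F}(\overline{U})\subset U\subset\overline{U}$, the compact sets $\mathcal{F}^{k}(\overline{U})$ are decreasing, so $\tilde{A}$ is nonempty and $\mathcal{F}^{k}(\overline{U})\to\tilde{A}$ in the Hausdorff metric. Either because the set operator $\mathcal{F}:\mathbb{H}(\mathbb{P}^{n})\to\mathbb{H}(\mathbb{P}^{n})$ is continuous, or by a direct pigeonhole argument over the finitely many maps in $\mathcal{F}$, one gets $\mathcal{F}(\tilde{A})=\tilde{A}$. Combining this with $\tilde{A}\subset C$ yields $\tilde{A}=\mathcal{F}(\tilde{A})\subset\mathcal{F}(C)\subset int(C)$, so $\tilde{A}$ lies in the open set $int(C)$. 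A decreasing sequence of compact sets whose intersection is contained in an open set is eventually contained in that open set, so there is $k_{0}$ with $\mathcal{F}^{k_{0}}(\overline{U})\subset int(C)\subset C$.

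To finish: let $B\subset U$ be any nonempty compact set. Then $\mathcal{F}^{k_{0}}(B)\subset\mathcal{F}^{k_{0}}(\overline{U})\subset C$ is a nonempty compact subset of $C$, so by the last sentence of the first paragraph $\mathcal{F}^{k+k_{0}}(B)=\mathcal{F}^{k}\bigl(\mathcal{F}^{k_{0}}(B)\bigr)\to A$ as $k\to\infty$, whence $\mathcal{F}^{k}(B)\to A$. As this holds for every compact $B\subset U$, the open set $U$ is contained in the basin of attraction of $A$, which completes the proof. The step I expect to be the main obstacle is the first one: it is not enough to cite Theorem \ref{main} for the mere existence of $A$ and of \emph{some} contractive region; one must extract from the proof a concrete compact set $C$ on which $\mathcal{F}$ is contractive \emph{and} which already contains $\tilde{A}$, so that the uniqueness of the invariant compact set of $\mathcal{F}|_{C}$ (via Theorem \ref{uniquethm}) can be invoked to pin down $\tilde{A}$. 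After that, everything is routine bookkeeping with nested compact sets.
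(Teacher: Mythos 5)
Your proposal is correct and follows essentially the same route as the paper's proof: both re-run the $(2)\Rightarrow(3)\Rightarrow(4)$ construction on the given $U$, exploit the nested intersection $\widetilde{A}=\bigcap_{k}\mathcal{F}^{k}(\overline{U})$ to see that the orbit of any compact $B\subset U$ eventually enters the compact region where $\mathcal{F}$ is contractive, and finish with the contraction mapping theorem on the hyperspace together with Theorems \ref{main} and \ref{uniquethm}. The only (cosmetic) difference is that you take the contractive region to be $C=\bigcup_i C_i$ itself, whereas the paper uses an $\varepsilon$-dilation $\widetilde{A}_{\varepsilon}$ and additionally identifies $A=\widetilde{A}$, a step your argument does not need.
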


\begin{proof}
We begin by noting that $\mathcal{F}(\overline{U})\subset U$ implies that
$\left\{  \mathcal{F}^{k}(\overline{U})\right\}  _{k=1}^{\infty}$ is a nested
sequence of nonempty compact sets. So
\[
\widetilde{A}:=\bigcap_{k=1}^{\infty}\mathcal{F}^{k}(\overline{U})
\]
is also a nonempty compact set. Using the continuity of $\mathcal{F}%
:\mathbb{H(P}^{n}\mathbb{)\rightarrow}$ $\mathbb{H(P}^{n}\mathbb{)}$, we have
$\mathcal{F}(\widetilde{A})=\widetilde{A}$.

If $B\in\mathbb{H(P}^{n}\mathbb{)}$ is such that $B\subset U,$ then, given any
$\varepsilon>0$, there is a positive integer $K :=K(\varepsilon)$ such that
$\mathcal{F}^{K(\varepsilon)}(B)\subset\widetilde{A}_{\varepsilon}$, the set
$\widetilde{A}$ dilated by an open ball of radius $\varepsilon.$

In the next paragraph we are going to show that, for sufficiently small
$\varepsilon>0,$ there is a metric on $\widetilde{A}_{\varepsilon}$ such that
$\mathcal{F}$ is contractive on $\widetilde{A}_{\varepsilon}.$ For now, assume
that $\mathcal{F}$ is contractive on $\widetilde{A}_{\varepsilon}$. This
implies, by Theorems \ref{main} and \ref{uniquethm}, that $\mathcal{F}$ has a
unique attractor $A$ and it is contained in $\widetilde{A}_{\varepsilon}$. We
now show that $A=\widetilde{A}$. \ That $\mathcal{F}$ is contractive on
$\widetilde{A}_{\varepsilon}$ implies that $\mathcal{F},$ considered as a
mapping on $\mathbb{H}\left(  \widetilde{A}_{\varepsilon}\right)  ,$ is a
contraction with respect to the Hausdorff metric. By the contraction mapping
theorem, $\mathcal{F}$ has a unique fixed point, so $A=\widetilde{A}.$ By
choosing $\varepsilon$ small enough that $\widetilde{A}_{\varepsilon
}=A_{\varepsilon}$ lies in the basin of attraction of $A,$ the fact that
$\mathcal{F}^{K}(B)\subset\widetilde{A}_{\varepsilon}$ implies that
$\lim_{k\rightarrow\infty}\mathcal{F}^{k}(B)=A$. Hence $U$ lies in the basin
of attraction of $A$, which concludes the proof of Proposition \ref{open}.

To prove that $\mathcal{F}$ is contractive on $\widetilde{A}_{\varepsilon}$
for sufficiently small $\varepsilon>0,$ we follow the steps in the
construction of the metric in statement (4) of Theorem \ref{main}, starting
from the proof of Lemma \ref{(2)implies(3)A}. As in the proof of Lemma
\ref{(2)implies(3)A}, let $U=\cup_{\alpha}U_{\alpha}$, where the $U_{\alpha}$
are the connected components of $U$. Let $\left\{  O_{i}\right\}  $ be the set
of $U_{\alpha}$ that have nonempty intersection with $\tilde{A}$. Since
$\widetilde{A}$ is compact and nonempty, we must have
\[
(\widetilde{A}_{\varepsilon})\subset\bigcup_{i}O_{i}%
\]
for all $\varepsilon$ sufficiently small. We now follow the steps in the proof
of Lemma \ref{(2)implies(3)A}, Lemma \ref{(2)implies(3)B}, up to and including
Lemma \ref{(3)implies(4)lemma}, to construct a metric on a finite set of
convex bodies $\left\{  C_{i}\right\}  $ such that $\cup_{i}O_{i}\subset
\cup_{i}C_{i}$ and such that $\mathcal{F}$ is contractive on $\cup_{i}C_{i}$.
Note that the metric is constructed on a set containing $\cup_{i}O_{i},$ which
in turn contains $\widetilde{A}_{\varepsilon}$. This completes the proof.
\end{proof}

We can now prove Theorem \ref{adjointhm}. \vskip2mm

\begin{proof}
[\textbf{Proof of Theorem 3}]We prove the first statement of the theorem. The
proof of the second statement is identical with $\mathcal{F}$ replaced by
$\mathcal{F}^{-1}$.

Assume that projective IFS $\mathcal{F}$ has an attractor that avoids a
hyperplane. By statement (4) of Theorem \ref{main}, the IFS $\mathcal{F}^{t}$
has an attractor that avoids a hyperplane. Then, according to Proposition
\ref{dualIFSlemma}, $\widehat{\mathcal{F}^{-1}}$ has an attractor that avoids
a point. By definition of hyperplane repeller, $\mathcal{F}$ has a hyperplane
repeller that avoids a point.

Concerning the basin of attraction, let $R$ denote the union of the
hyperplanes in $\mathcal{R}$ and let $Q=\mathbb{P}^{n}\smallsetminus R$. We
must show that $Q=O,$ where $O$ is the basin of attraction of the attractor
$A$ of $\mathcal{F}$.

First we show that $O\subset Q$, i.e. $O\cap R=\emptyset$. Consider any
$f:\mathbb{P}^{n}\rightarrow\mathbb{P}^{n}$ with $f\in\mathcal{F}$ and
$f^{-1}:\widehat{\mathbb{P}^{n}}\rightarrow\widehat{\mathbb{P}^{n}}$. Since we
have already shown that $\widehat{\mathcal{F}^{-1}}$ has an attractor, it
satisfies all statements of Theorem \ref{main}. \ It then follows, exactly as
in the proof of Lemma \ref{4implies1}, that $f^{-1}:\widehat{\mathbb{P}^{n}%
}\rightarrow\widehat{\mathbb{P}^{n}}$ has an attractive fixed point, a
hyperplane $\widehat{H_{f}}\in\mathcal{R}\subset\widehat{\mathbb{P}^{n}}$. Let%
\[
\mathcal{B}=\overline{\bigcup_{k=1}^{\infty}\bigcup_{f\in\mathcal{F}}\left(
\widehat{\mathcal{F}^{-1}}\right)  ^{k}(\widehat{H_{f}})}\subset
\widehat{\mathbb{P}^{n}}\qquad\text{ and }\qquad B=\bigcup_{H\in\mathcal{B}%
}H.
\]
The fact that $\widehat{H_{f}}=H_{f}$ (Lemma \ref{newlemma}) and $H_{f}\cap
O=\emptyset$ for all $f\in\mathcal{F}$ implies that $O\cap B=\emptyset$. We
claim that $\mathcal{B}=\mathcal{R}$ and hence $B=R,$ which would complete the
proof that $\ O\cap R=\emptyset$. Concerning the claim, because $\mathcal{R}$
is the attractor of $\widehat{\mathcal{F}^{-1}},$ we have that%
\[
\mathcal{R}=\lim_{k\rightarrow\infty}\left(  \widehat{\mathcal{F}^{-1}%
}\right)  ^{k}\left(  \bigcup_{f\in\mathcal{F}}\widehat{H_{f}}\right)
\subset\mathcal{B}.
\]
Since $\widehat{H_{f}}\in\mathcal{R}$ for all $f$ $\in$ F, also $\mathcal{B}%
\subset\mathcal{R}$, which completes the proof of the claim.

Finally we show that $Q\subset O$. By statements $(2)$ and $\left(  5\right)
$ of Theorem \ref{main}, $\mathcal{F}^{t}$ has an attractor $A^{t}$ that
avoids a hyperplane. Consequently there is an open neighborhood $V$ of $A^{t}$
and a metric such that $\mathcal{F}^{t}$ is contractive on $\overline{V},$ and
$\overline{V}$ avoids a hyperplane. In particular $\mathcal{F}^{t}$ is a
contraction on $\mathbb{H}\left(  \overline{V}\right)  $ with respect to the
Hausdorff metric. Let $\lambda$ denote a contractivity factor for
$\mathcal{F}^{t}|_{\overline{V}}$. Let $\varepsilon>0$ be small enough that
the closed set $A^{t}_{\varepsilon}$ (the dilation of $A^{t}$ by a closed ball
of radius $\varepsilon$, namely the set of all points whose distance from
$A^{t}$ is less than or equal to $\varepsilon$) is contained in $V$. If
$h_{\mathbb{P}}(A^{t}_{\varepsilon},A^{t})=\varepsilon$, then
\begin{align*}
h_{\mathbb{P}}(\mathcal{F}^{t}(A^{t}_{\varepsilon}),A^{t})  &  =h_{\mathbb{P}%
}(\mathcal{F}^{t}(A^{t}_{\varepsilon}),\mathcal{F}^{t}(A^{t}))\\
&  \leq\lambda h_{\mathbb{P}}(A^{t}_{\varepsilon},A^{t}))=\lambda\varepsilon.
\end{align*}
It follows that $\mathcal{F}^{t}(A^{t}_{\varepsilon})\subset int(A^{t}%
_{\varepsilon})$ and from Proposition \ref{symbolic} (2,3) that
\[
\mathcal{F}(\overline{(A^{t}_{\varepsilon})^{\ast}})\subseteq\mathcal{F}%
(int((A^{t}_{\varepsilon})^{\ast}))\subset(A^{t}_{\varepsilon})^{\ast}.
\]
Let $Q_{\varepsilon}:=(A^{t}_{\varepsilon})^{\ast}$. It follows from
$\mathcal{F}(\overline{Q_{\varepsilon}})\subset Q_{\varepsilon}$ and
Proposition \ref{open} that $Q_{\varepsilon}\subset O$. Let $\mathcal{R}%
_{\varepsilon}=\mathcal{D}\left(  A^{t}_{\varepsilon}\right)  $ and let
$R_{\varepsilon}\subset\mathbb{P}^{n}$ be the union of the hyperplanes in
$\mathcal{R}_{\varepsilon}.$ By Proposition \ref{dualIFSlemma} and the
definition of the dual complement, $Q_{\varepsilon}=\mathbb{P}^{n}\backslash
R_{\varepsilon}$ and $Q=\mathbb{P}^{n}\backslash R$. Since $Q_{\varepsilon
}\subset O$ it follows that $R_{\varepsilon}\subset\mathbb{P}^{n}\backslash
O$. Since $\mathcal{D}$ is continuous (Proposition \ref{compactH}) and
$A^{t}_{\varepsilon}\rightarrow A^{t}$ as $\varepsilon\rightarrow0$, it
follows that $\mathcal{R}_{\varepsilon}=\mathcal{D}\left(  A^{t}_{\varepsilon
}\right)  \rightarrow\mathcal{D(}A^{t})=\mathcal{R}$. $\ $Consequently
$R\subset\mathbb{P}^{n}\backslash O,$ and therefore $Q=\mathbb{P}%
^{n}\backslash R\subset O$.
\end{proof}

\section{\label{groupinvsec}Geometrical Properties of Attractors}

The Hausdorff dimension of the attractor of a projective IFS is invariant
under the projective group $PGL(n+1,\mathbb{R})$. This is so because any
projective transformation is bi-Lipshitz with respect to $d_{\mathbb{P}},$
that is, if $f:\mathbb{P}^{n}\rightarrow\mathbb{P}^{n}$ is a projective
transformation, then there exist two constants $0<\lambda_{1}<\lambda
_{2}<\infty$ such that
\[
\lambda_{1}d_{\mathbb{P}}(x,y)\leq d_{\mathbb{P}}(f(x),f(y))\leq\lambda
_{2}d_{\mathbb{P}}(x,y)\text{.}%
\]
We omit the proof as it is a straightforward geometrical estimate.

The main focus of this section is another type of invariant that depends both
on the attractor and on a corresponding hyperplane repeller. It is a type of
Conley index and is relevant to the study of parameter dependent families of
projective IFSs and the question of when there exists a continuous family of
IFS's whose attractors interpolate a given set of projective attractors, as
discussed in Example 4 in Section \ref{examplesec}. Ongoing studies suggest
that this index has stability properties with respect to small perturbations
and that there does not exist a family of projective IFSs whose attractors
continuously interpolate between attractors with different indices.

\begin{definition}
\label{indexdef} Let $\mathcal{F}$ be a projective IFS with attractor $A$ that
avoids a hyperplane and let $R$ denote the union of the hyperplanes in the
hyperplane repeller of $\mathcal{F}$. The \textbf{index of} $\mathcal{F}$ is
\[
index(\mathcal{F})=\#\text{ connected components $O$ of }\mathbb{P}%
^{n}\backslash R\text{ such that }A\cap O\neq\emptyset\text{.}%
\]

\end{definition}

Namely, the index of a contractive projective IFS is the number of components
of the open set $\mathbb{P}^{n}\backslash R$ which have non-empty intersection
with its attractor. By statement (1) of Theorem 3, we know that
$index(\mathcal{F})$ will always equal a positive integer.

\begin{definition}
\label{indexdef2} Let $A$ denote a nonempty compact subset of $\mathbb{P}^{n}%
$, that avoids a hyperplane. If $\mathbb{F}_{A}$ denotes the collection of all
projective IFSs for which $A$ is an attractor, then the \textbf{index of} $A$
is defined by the rule%
\[
index(A)=\min_{\mathcal{F}\in\mathbb{F}_{A}}\{index(\mathcal{F})\}\text{.}%
\]
If the collection $\mathbb{F}_{A}$ is empty, then define $index(A)=0$.
\end{definition}

Note that an attractor $A$ not only has a multitude of projective IFSs
associated with it, but it may also have a multitude of repellers associated
with it. Clearly $index(A)$ is invariant under under $PGL(n+1,\mathbb{R}),$
the group of real projective transformations. The following lemma shows that,
for any positive integer, there exists a projective IFS $\mathcal{F}$ that has
that integer as index.

\begin{proposition}
\label{indexflemma} Let $\mathcal{F}=(\mathbb{P}^{1};f_{1},f_{2}
,f_{3},...,f_{M})$ be a projective IFS where, for each $m$, the projective
transformation $f_{m}$ is represented by the matrix
\[
L_{m}:=
\begin{pmatrix}
2m\lambda-2m+1 & 2m\left(  m-\frac{1}{2}\right)  -m\lambda\left(  2m-1\right)
\\
2\lambda-2 & 2m-\lambda\left(  2m-1\right)
\end{pmatrix}
\text{.}%
\]
For any integer $M>1$ and sufficiently large $\lambda$, the projective IFS has
$index(\mathcal{F})=M.$
\end{proposition}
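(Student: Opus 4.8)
The plan is to determine the attractor $A$ and the hyperplane repeller $R$ of $\mathcal{F}$ explicitly for large $\lambda$ and then count the connected components of $\mathbb{P}^{1}\setminus R$ that meet $A$. First I would do the linear algebra. A direct computation gives $\mathrm{tr}\,L_{m}=\lambda+1$ and $L_{m}\binom{m}{1}=\lambda\binom{m}{1}$, so the eigenvalues of $L_{m}$ are $\lambda$ and $1$, the $\lambda$-eigendirection being the affine point $m$ and the $1$-eigendirection being the affine point $m-\tfrac12$ (the direction $\binom{2m-1}{2}$). Hence for $\lambda>1$ each $f_{m}$ has an attractive fixed point at $m$ with invariant hyperplane $m-\tfrac12$; dually, since $\det L_{m}=\lambda$ so that $L_{m}^{-1}=\tfrac1\lambda\,\mathrm{adj}\,L_{m}$ has eigenvalues $\tfrac1\lambda$ and $1$, each $f_{m}^{-1}$ has an attractive fixed point at $m-\tfrac12$ with invariant hyperplane $m$. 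Writing $L_{m}=\lambda M_{m}+N_{m}$, one checks that $M_{m}$ is the rank-one projection of $\mathbb{R}^{2}$ onto $\mathrm{span}\binom{m}{1}$ along $\mathrm{span}\binom{2m-1}{2}$, so $\tfrac1\lambda L_{m}\to M_{m}$ as $\lambda\to\infty$ and therefore $f_{m}$ converges, uniformly on compact subsets of $\mathbb{P}^{1}\setminus\{m-\tfrac12\}$, to the constant map with value $m$; symmetrically $\tfrac1\lambda\,\mathrm{adj}\,L_{m}\to N_{m}$, the projection onto $\mathrm{span}\binom{2m-1}{2}$ along $\mathrm{span}\binom{m}{1}$, so $f_{m}^{-1}$ converges uniformly on compact subsets of $\mathbb{P}^{1}\setminus\{m\}$ to the constant map with value $m-\tfrac12$.

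Next I would fix $\varepsilon\in(0,\tfrac18)$ and set $U=\bigcup_{m=1}^{M}(m-\varepsilon,m+\varepsilon)$, an open set whose closure avoids the hyperplane $[1:0]$ and every point $m'-\tfrac12$. By the convergence above, once $\lambda$ is large enough we have $f_{j}(\overline U)\subset(j-\varepsilon,j+\varepsilon)$ for every $j$, hence $\mathcal{F}(\overline U)\subset U$; by $(2)\Rightarrow(1)$ of Theorem~\ref{main}, $\mathcal{F}$ is contractive with an attractor $A\subset U$ that avoids a hyperplane, and by the proof of Lemma~\ref{4implies1} the attractive fixed point $m$ of $f_{m}$ lies in $A$. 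Thus $A=\bigsqcup_{m=1}^{M}A_{m}$ with $m\in A_{m}\subset(m-\varepsilon,m+\varepsilon)$. Applying the same argument to $\mathcal{F}^{-1}$ with $U'=\bigcup_{m=1}^{M}(m-\tfrac12-\varepsilon,m-\tfrac12+\varepsilon)$ (whose closure avoids every integer point) shows that for $\lambda$ large $\mathcal{F}^{-1}$ is contractive with attractor $R\subset U'$ and $m-\tfrac12\in R$ for each $m$, so $R=\bigsqcup_{m=1}^{M}R_{m}$ with $m-\tfrac12\in R_{m}\subset(m-\tfrac12-\varepsilon,m-\tfrac12+\varepsilon)$. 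Since a hyperplane in $\mathbb{P}^{1}$ is a single point, $\widehat{\mathbb{P}^{1}}$ is $\mathbb{P}^{1}$ itself with the same metric and $\widehat{\mathcal{F}^{-1}}=\mathcal{F}^{-1}$; hence this $R$ is precisely the union of the hyperplanes in the hyperplane repeller of $\mathcal{F}$, and $index(\mathcal{F})$ is defined from it.

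Finally I would count components. The closed arcs $I_{m}=[m-\tfrac12-\varepsilon,m-\tfrac12+\varepsilon]$, $m=1,\dots,M$, are pairwise disjoint because $\varepsilon<\tfrac14$, so $\mathbb{P}^{1}\setminus\bigcup_{m}I_{m}$ is a union of exactly $M$ open arcs $G_{1},\dots,G_{M}$ with $m\in G_{m}$ (here $G_{M}$ is the long arc through $[1:0]$). Every component of $\mathbb{P}^{1}\setminus R$ is an open arc whose two endpoints lie in $R\subset\bigcup_{m}I_{m}$; the component containing $G_{m}$ is $O_{m}:=(\max R_{m},\min R_{m+1})$ (indices cyclic), and $O_{1},\dots,O_{M}$ are $M$ distinct components with $O_{m}\supset G_{m}\supset(m-\varepsilon,m+\varepsilon)\supset A_{m}$ (using $2\varepsilon<\tfrac12$). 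Any other component lies inside some $(m-\tfrac12-\varepsilon,m-\tfrac12+\varepsilon)$ and, since $\lvert(m-\tfrac12)-m'\rvert\ge\tfrac12>2\varepsilon$ for all integers $m,m'$, is disjoint from $U\supseteq A$. Hence the components of $\mathbb{P}^{1}\setminus R$ meeting $A$ are exactly $O_{1},\dots,O_{M}$, so $index(\mathcal{F})=M$.

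The main obstacle is the middle step: establishing that for large $\lambda$ the attractor and the hyperplane repeller genuinely consist of $M$ tiny, pairwise well-separated clusters located near the integers and the half-integers respectively. This rests on the limiting behaviour $\tfrac1\lambda L_{m}\to M_{m}$ (and its dual) to control both location and contractivity, on $(2)\Rightarrow(1)$ of Theorem~\ref{main}, and on the identification $\widehat{\mathcal{F}^{-1}}=\mathcal{F}^{-1}$ which lets one read off $R$ as the attractor of $\mathcal{F}^{-1}$. Once that is in place, the component count is a routine combinatorial fact about $M$ disjoint closed arcs on a circle.
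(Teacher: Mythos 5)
Your proposal is correct and follows essentially the same route as the paper: diagonalize $L_{m}$ to locate the attractive fixed point at $m$ and the repelling one at $m-\tfrac12$, confine the attractor to small neighborhoods of the integers and the repeller (identified, since hyperplanes in $\mathbb{P}^{1}$ are points, with the attractor of $\mathcal{F}^{-1}$) to small neighborhoods of the half-integers for large $\lambda$, and count components. Your version is in fact more detailed than the paper's sketch (explicit rank-one limit, appeal to Theorem \ref{main}/Proposition \ref{open}, and the careful component count), and your eigenvalue computation ($\lambda$ rather than the paper's stated $\lambda/2$) is the correct one, the discrepancy being immaterial projectively.
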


\begin{proof}
Topologically, the projective line $\mathbb{P}^{1}$ is a circle. It is readily
verified that
\[
L_{m}=%
\begin{pmatrix}
\lambda m & m-\frac{1}{2}\\
\lambda & 1
\end{pmatrix}%
\begin{pmatrix}
m & m-\frac{1}{2}\\
1 & 1
\end{pmatrix}
^{-1}\text{,}%
\]
from which it can be easily checked that, for $\lambda$ is sufficiently large,
$f_{m}$ has attractive fixed point $x_{m}=%
\begin{pmatrix}
m\\
1
\end{pmatrix}
$ and repulsive fixed point $y_{m}=%
\begin{pmatrix}
m-\frac{1}{2}\\
1
\end{pmatrix}
$. In particular $L_{m}%
\begin{pmatrix}
m\\
1
\end{pmatrix}
=\frac{\lambda}{2}%
\begin{pmatrix}
m\\
1
\end{pmatrix}
$ and $L_{m}%
\begin{pmatrix}
m-\frac{1}{2}\\
1
\end{pmatrix}
=%
\begin{pmatrix}
m-\frac{1}{2}\\
1
\end{pmatrix}
$. Note that the points $x_{i},\,i=1,2,\dots,M,$ and $y_{i},\,i=1,2,\dots,M,$
interlace on the circle (projective line). Also, as $\lambda$ increases, the
attractive fixed points $x_{m}$ become increasingly attractive.

Let $I_{k}$ denote a very small interval that contains the attractive fixed
point $x_{k}$ of $f_{k}$, for $k=1,2,...,M.$ When $\lambda$ is sufficiently
large, $f_{m}(\cup I_{k})\subset I_{m}\subset\cup I_{k}.$ It follows that the
attractor of $\mathcal{F}$ is a Cantor set contained in $\cup I_{k}$.
Similarly, the hyperplane repeller of $\mathcal{F}$ consists of another Cantor
set that lies very close to the set of points $\{k-0.5:k=1,2,...,M\}$. It
follows that $index(\mathcal{F})=M$.
\end{proof}

Another example is illustrated in Figure \ref{fourcpts}. In this case the
underlying space has dimension two and the IFS $\mathcal{F}$ has
$index(\mathcal{F})=4$.%
\begin{figure}[ptb]%
\centering
\includegraphics[
natheight=13.652900in,
natwidth=13.652900in,
height=2.3495in,
width=2.3495in
]%
{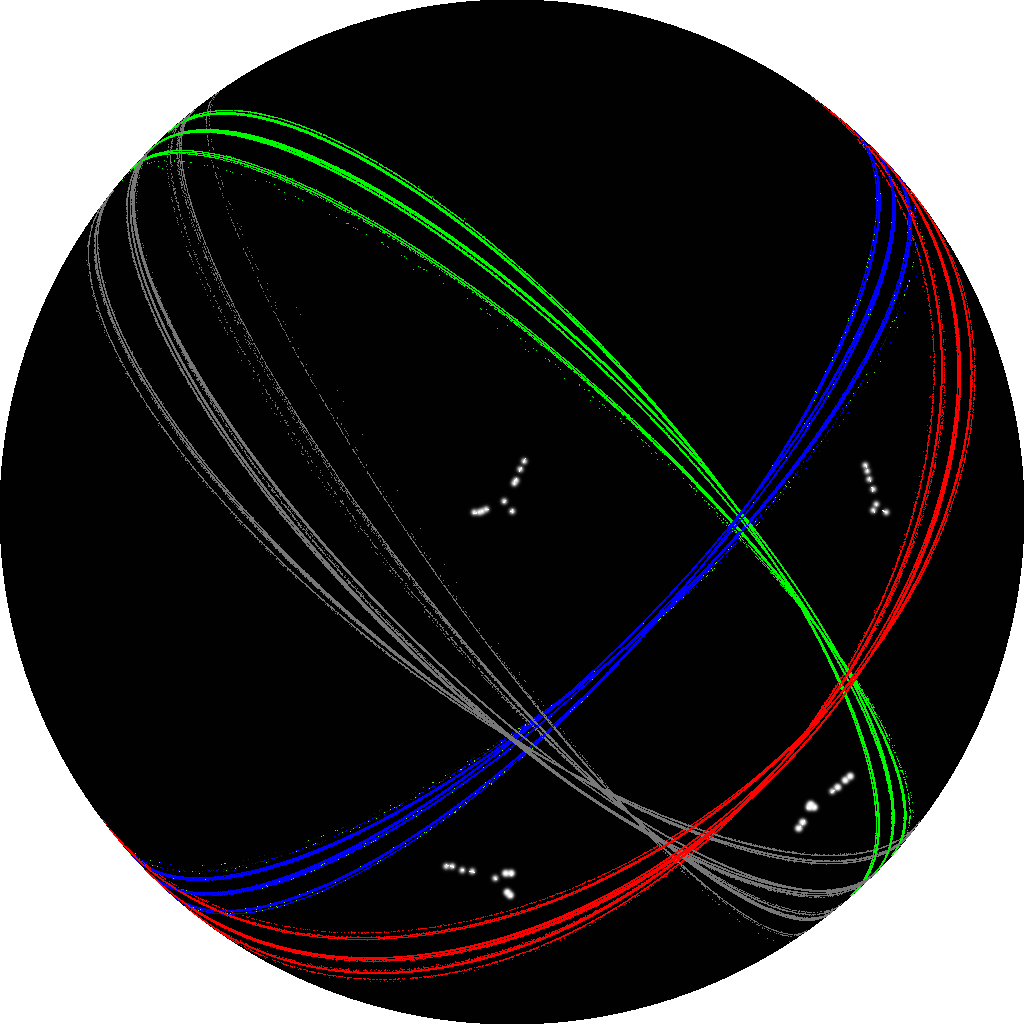}%
\caption{A projective IFS with index equal to four. The attractor is sketched
in white, while the union of the hyperplanes in the hyperplane repeller is
indicated in red, blue, green and gray.}%
\label{fourcpts}%
\end{figure}

The previous result shows that the index of a contractive IFS can be any
positive integer. It does not state that the same is true for the index of an
attractor. The following Theorem \ref{indexthm} shows that the index of an
attractor is a nontrivial invariant in that it is not always the case that
$index(A)=1$. To prove it we need the following definition and result.

\begin{definition}
A set $C\subset\mathbb{P}^{n}$ is called a \textbf{Cantor set} if it is the
attractor of a contractive IFS $(\mathbb{P}^{n};f_{1},f_{2},...,f_{M}),$
$M\geq2$, such that each point of $C$ corresponds to an unique string
$\sigma=\sigma_{1}\sigma_{2}\dots\in\{1,2,...,M\}^{\infty}$ such that
\begin{equation}
x=\varphi_{F}(\sigma)=\lim_{k\rightarrow\infty}f_{_{\sigma_{1}}}\circ
f_{\sigma_{2}}\circ\cdots\circ f_{\sigma_{k}}(x_{0}),
\label{bijection-equation}%
\end{equation}
where $x_{0}$ is any point in $C$.
\end{definition}

\begin{lemma}
\label{final-lemma} Let $\mathcal{F}$ $=(\mathcal{P}^{n};f_{1},f_{2}%
,...,f_{M})$ be a projective IFS whose attractor is a Cantor set $C$. Let the
projective IFS
\[
\mathcal{G}=(\mathbb{P}^{n};f_{\omega_{1}},f_{\omega_{2}},\dots,f_{\omega_{L}%
})
\]
have the same attractor $C,$ where each $f_{\omega_{l}}$ is a finite
composition of functions in $\mathcal{F}$, i.e.
\[
f_{\omega_{l}}=f_{\sigma_{1}^{l}}\circ f_{\sigma_{2}^{l}}\circ\cdots\circ
f_{\sigma_{j_{l}}^{l}}%
\]
in the obvious notation. Then $\mathcal{F}$ and $\mathcal{G}$ have the same
hyperplane repeller and $index(\mathcal{F})=index(\mathcal{G})$.
\end{lemma}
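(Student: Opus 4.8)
\section{Proof of Lemma \ref{final-lemma}: plan of attack}

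The plan is to reduce everything to the assertion that $\mathcal{F}$ and $\mathcal{G}$ have the same hyperplane repeller. Indeed $index(\mathcal{F})$ depends, by Definition \ref{indexdef}, only on the attractor of $\mathcal{F}$ and on the union $R$ of the hyperplanes of its hyperplane repeller; since $\mathcal{F}$ and $\mathcal{G}$ are given to have the same attractor $C$, the equality $index(\mathcal{F})=index(\mathcal{G})$ will follow at once from the equality of hyperplane repellers. I would first record that both $\mathcal{F}$ and $\mathcal{G}$ are contractive: $C$, being a Cantor set, is the attractor of a contractive IFS and so avoids a hyperplane, hence statement (1) of Theorem \ref{main} holds for $\mathcal{F}$ and for $\mathcal{G}$. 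Consequently $\widehat{\mathcal{F}^{-1}}$ and $\widehat{\mathcal{G}^{-1}}$ are contractive as well (their attractors are the hyperplane repellers, which by Theorem \ref{adjointhm}(1) avoid a point, and the hyperplanes through a fixed point form a hyperplane of $\widehat{\mathbb{P}^n}$), so each has a unique attractor by Theorem \ref{uniquethm}.

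Next I would give a fixed-point description of the hyperplane repeller. A composition $f_{m_1}^{-1}\circ\cdots\circ f_{m_k}^{-1}=(f_{m_k}\circ\cdots\circ f_{m_1})^{-1}$ of generators of $\widehat{\mathcal{F}^{-1}}$ is a contraction with respect to the contractive metric, hence by Corollary \ref{cor} has an attractive fixed point, which by Lemma \ref{newlemma} is the invariant hyperplane $H_g$ of the $\mathcal{F}$-composition $g=f_{m_k}\circ\cdots\circ f_{m_1}$. Since the attractor of a contractive IFS is the closure of the set of such periodic points (as used already in the proof of Theorem \ref{adjointhm}), and likewise for $\mathcal{G}$, the hyperplane repellers $\mathcal{R}_{\mathcal{F}},\mathcal{R}_{\mathcal{G}}\subset\widehat{\mathbb{P}^n}$ satisfy
\[
\mathcal{R}_{\mathcal{F}}=\overline{\{H_{g}:\,g\ \text{a finite composition of maps in}\ \mathcal{F}\}},\qquad \mathcal{R}_{\mathcal{G}}=\overline{\{H_{g'}:\,g'\ \text{a finite composition of maps in}\ \mathcal{G}\}}.
\]
(Equivalently one may pass to the attractors of $\mathcal{F}^{t},\mathcal{G}^{t}$ via Proposition \ref{dualIFSlemma} and $\mathcal{D}$.) One inclusion is then immediate: every finite composition of maps in $\mathcal{G}$ is a finite composition of maps in $\mathcal{F}$, since each $f_{\omega_{l}}$ is such a composition; hence $\mathcal{R}_{\mathcal{G}}\subseteq\mathcal{R}_{\mathcal{F}}$, and so the union $R_{\mathcal{G}}$ of its hyperplanes satisfies $R_{\mathcal{G}}\subseteq R_{\mathcal{F}}$.

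For the reverse inclusion I would bring in the Cantor structure. Let $\varphi_{\mathcal{F}}\colon\{1,\dots,M\}^{\infty}\to C$ and $\varphi_{\mathcal{G}}\colon\{1,\dots,L\}^{\infty}\to C$ be the coding maps; $\varphi_{\mathcal{F}}$ is a bijection because $\mathcal{F}$ is a defining IFS for the Cantor set $C$, while $\varphi_{\mathcal{G}}$ is a continuous surjection. Writing $\rho(\tau)=\omega_{\tau_{1}}\omega_{\tau_{2}}\cdots$ for the infinite word over $\{1,\dots,M\}$ obtained by concatenating the blocks, one checks directly that $\varphi_{\mathcal{G}}=\varphi_{\mathcal{F}}\circ\rho$. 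Since $\varphi_{\mathcal{F}}$ is injective and $\varphi_{\mathcal{F}}(\rho(\{1,\dots,L\}^{\infty}))=\varphi_{\mathcal{G}}(\{1,\dots,L\}^{\infty})=C=\varphi_{\mathcal{F}}(\{1,\dots,M\}^{\infty})$, the map $\rho$ is surjective; in particular every finite word over $\{1,\dots,M\}$ is a prefix of some concatenation of the $\omega_{l}$'s. The proof then rests on the following purely combinatorial claim: \emph{if every finite word over $\{1,\dots,M\}$ is a prefix of a concatenation of the $\omega_{l}$'s, then for every finite word $v$ some power $v^{m}$, $m\ge1$, is itself a concatenation of the $\omega_{l}$'s.} Granting this, the proof finishes cleanly: given a finite $\mathcal{F}$-word $v$, choose $m$ with $v^{m}$ a concatenation of $\omega$-blocks; then $(f_{v})^{m}=f_{v^{m}}$ is a finite composition of maps in $\mathcal{G}$ and has the same invariant hyperplane as $f_{v}$, so $H_{f_{v}}\in\mathcal{R}_{\mathcal{G}}$. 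As the hyperplanes $H_{f_{v}}$ are dense in $\mathcal{R}_{\mathcal{F}}$ and $\mathcal{R}_{\mathcal{G}}$ is closed, $\mathcal{R}_{\mathcal{F}}\subseteq\mathcal{R}_{\mathcal{G}}$; with the previous inclusion this gives $\mathcal{R}_{\mathcal{F}}=\mathcal{R}_{\mathcal{G}}$, hence $R_{\mathcal{F}}=R_{\mathcal{G}}$, and since the attractors also coincide, $index(\mathcal{F})=index(\mathcal{G})$.

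To prove the combinatorial claim I would fix $v$ of length $p$, use surjectivity of $\rho$ to write $v^{\infty}=\omega_{\tau_{1}}\omega_{\tau_{2}}\cdots$, and follow the block-boundary positions $0=b_{0}<b_{1}<b_{2}<\cdots$ modulo $p$: if $b_{j}\equiv0\pmod p$ for some $j\ge1$ then $\omega_{\tau_{1}}\cdots\omega_{\tau_{j}}=v^{b_{j}/p}$ and we are done, so the work is to exclude the alternative in which the residues $b_{j}\bmod p$ avoid $0$ forever. By pigeonhole a nonzero residue recurs, producing a concatenation equal to a power of a cyclic rotation of $v$, and one must upgrade this to a power of $v$ itself by exploiting the richness of the block set $\{\omega_{l}\}$ forced by surjectivity of $\rho$. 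I expect this last step to be the main obstacle; if it proves stubborn, the alternative is to argue directly at the level of hyperplanes, showing that $H_{f_{v}}$ is a limit of invariant hyperplanes $H_{f_{V}}$ with $V$ a concatenation of $\omega$-blocks, where the asymmetry between prefixes and suffixes introduced by passing to inverses is exactly what makes the estimate delicate. In either approach all of the geometric input (Corollary \ref{cor}, Lemma \ref{newlemma}, Proposition \ref{dualIFSlemma}, density of periodic points) is routine, and the entire difficulty is concentrated in this symbolic bookkeeping.
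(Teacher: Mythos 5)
Your overall strategy coincides with the paper's: reduce the index statement to equality of the hyperplane repellers, introduce the concatenation map on address strings (your $\rho$, the paper's $\psi$), and prove it surjective from the bijectivity of the coding map of the Cantor set $C$. Your observation that $\mathcal{R}_{\mathcal{G}}\subseteq\mathcal{R}_{\mathcal{F}}$ is immediate, because every $\mathcal{G}$-composition is an $\mathcal{F}$-composition, is a clean version of the inclusion the paper dismisses as ``similar, but easier.'' The problem is the reverse inclusion. You reduce it to the combinatorial claim that if every finite word over $\{1,\dots,M\}$ is a prefix of a concatenation of the $\omega$-blocks, then every finite word $v$ has a power $v^{m}$ that is itself such a concatenation. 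This claim is false --- and not only under the stated prefix hypothesis, but even under the stronger hypothesis you actually derive, namely that every infinite word is an infinite concatenation of blocks. Take $M=2$ and $\mathcal{G}=(\mathbb{P}^{1};f_{1},\,f_{2}\circ f_{1},\,f_{2}\circ f_{2})$, i.e.\ blocks $\{1,\,21,\,22\}$. This is a maximal prefix code, so every infinite address decomposes into blocks, $\mathcal{G}$ has the same attractor $C$, and the hypotheses of Lemma \ref{final-lemma} are satisfied. Yet for $v=12$ no power $(12)^{m}$ is a concatenation of $\{1,21,22\}$: any concatenation whose last letter is $2$ must end with the block $22$, whereas $(12)^{m}$ ends in $12$. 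Consequently $H_{f_{1}\circ f_{2}}$ is not the fixed hyperplane of any composition of maps of $\mathcal{G}$, and the route through periodic addresses collapses; the difficulty you flagged as ``the main obstacle'' is fatal to this formulation, not merely stubborn.

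What the paper does instead is essentially your fallback option: it argues with the full infinite addresses of points of the repeller rather than with periodic points. Given $r\in\mathcal{R}_{\mathcal{F}}$, it writes $r=\lim_{k}f_{\sigma_{1}}^{-1}\circ\cdots\circ f_{\sigma_{k}}^{-1}(r_{0})$, where $r_{0}$ is the fixed hyperplane of $f_{\omega_{1}}^{-1}$ (a point of both repellers), uses surjectivity of $\psi$ to regroup the address $\sigma$ into $\omega$-blocks, and reads off the subsequence of partial compositions taken at block boundaries as a $\widehat{\mathcal{G}^{-1}}$-orbit converging to $r$; no periodicity is needed, only the contractivity of $\widehat{\mathcal{F}^{-1}}$ and $\widehat{\mathcal{G}^{-1}}$, which you have already established. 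If you carry this out you must still confront the prefix/suffix reversal you correctly identified: since $f_{\sigma_{1}}^{-1}\circ\cdots\circ f_{\sigma_{k}}^{-1}=(f_{\sigma_{k}}\circ\cdots\circ f_{\sigma_{1}})^{-1}$, grouping the inverse composition into generators of $\widehat{\mathcal{G}^{-1}}$ amounts to decomposing $\sigma$ into the \emph{reversed} blocks, and the surjectivity statement has to be formulated so as to deliver exactly that. This bookkeeping is the real content of the inclusion $\mathcal{R}_{\mathcal{F}}\subseteq\mathcal{R}_{\mathcal{G}}$, and it is the one step your proposal leaves unproved.
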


\begin{proof}
We must show that $\mathcal{R}_{\mathcal{G}}=\mathcal{R}_{\mathcal{F}}$, where
$\mathcal{R}_{\mathcal{F}}$ is the hyperplane repeller of $\mathcal{F}$ and
$\mathcal{R}_{\mathcal{G}}$ is the the hyperplane repeller of $\mathcal{G}$.
Let $\sigma=\sigma_{1}\sigma_{2}\cdots$ and $\omega_{l_{1}}\omega_{l_{2}%
}\cdots$ be strings of symbols in $\{1,2,...,M\}^{\infty}$ and $\{\omega
_{1},\omega_{2},\dots,\omega_{L}\}^{\infty}$, respectively. Define
\[
\psi:\{\omega_{1},\omega_{2},\dots,\omega_{L}\}^{\infty}\rightarrow
\{1,2,...,M\}^{\infty}%
\]
by
\[
\psi(\omega_{l_{1}}\omega_{l_{2}}\cdots)=\zeta(\omega_{l_{1}})\,\zeta
(\omega_{l_{2}})\cdots\text{ \textit{where} }\zeta(\omega_{l})=\sigma_{1}%
^{l}\,\sigma_{2}^{l}\cdots\sigma_{j_{l}}^{l}.
\]
We claim that $\psi$ is surjective. It is well known that the mapping
$\varphi_{F}:\{1,2,...,M\}^{\infty}\rightarrow C$ in equation
(\ref{bijection-equation}) is a continuous bijection, see for example
\cite[Chapter 4]{FE1}. Let $\sigma=\sigma_{1}\sigma_{2}\cdots\in
\{1,2,...,M\}^{\infty}$ and let $x=\lim_{k\rightarrow\infty}f_{\sigma_{_{1}}%
}\circ f_{\sigma_{2}}\circ\cdots\circ f_{\sigma_{k}}(x_{0})$. Since $C$ is
also the attractor of $\mathcal{G}$ it is likewise true that there is at least
one string $\omega=\omega_{l_{1}}\omega_{l_{2}}\cdots\in\{\omega_{1}%
,\omega_{2},\cdots,\omega_{L}\}^{\infty}$ such that
\[
\begin{aligned} x&=\lim_{k\rightarrow \infty }f_{\omega _{l_{1}}}\circ f_{\omega _{l_{2}}}\circ \cdots \circ f_{\omega _{l_{k}}}(x_{0}) \\ &= \lim_{k\rightarrow \infty }( f_{\sigma _{1}^{l_1}}\circ \cdots \circ f_{\sigma _{j_{l_1}}^{l_1}})\circ ( f_{\sigma _{1}^{l_2}} \circ \cdots \circ f_{\sigma _{j_{l_2}}^{l_2}} ) \circ \cdots \circ ( f_{\sigma _{1}^{l_k}}\circ \cdots \circ f_{\sigma _{j_{l_k}}^{l_k}})(x_0). \end{aligned}
\]
By the uniqueness of $\sigma$ in equation (\ref{bijection-equation}), we have
$\psi(\omega)=\sigma$, showing that $\psi$ is surjective.

We are now going to show that $\mathcal{R}_{\mathcal{F}}\subseteq
\mathcal{R}_{\mathcal{G}}.$ Let $r\in\mathcal{R}_{\mathcal{F}}.$ Note that the
hyperplanes of $\mathbb{P}$ are simply the points of $\mathbb{P}$. Moreover,
the hyperplane repeller $\mathcal{R}_{\mathcal{F}}$ of $\mathcal{F}$ is simply
the attractor of the IFS $\widehat{\mathcal{F}^{-1}}:=(\widehat{\mathbb{P}%
^{n}};f_{1}^{-1},f_{2}^{-1},...,f_{M}^{-1})$ and the hyperplane repeller
$\mathcal{R}_{\mathcal{G}}$ of $\mathcal{G}$ is the attractor of
$\widehat{\mathcal{G}^{-1}}:=(\widehat{\mathbb{P}^{n}};f_{\omega_{1}}%
^{-1},f_{\omega_{2}}^{-1},\dots,f_{\omega_{L}}^{-1})$. Let $r_{0}$ be the
attractive fixed point of $f_{\omega_{1}}^{-1}.$ Note that $r_{0}$ lies in
both $\mathcal{R}_{\mathcal{G}}$ and in $\mathcal{R}_{\mathcal{F}}$. According
to Theorem \ref{main} and Theorem \ref{adjointhm}, both $\widehat{\mathcal{F}%
^{-1}}$ and $\widehat{\mathcal{G}^{-1}}$ are contractive. Therefore
\[
r=\lim_{k\rightarrow\infty}f_{\sigma_{1}}^{-1}\circ f_{\sigma_{2}}^{-1}%
\circ\cdots\circ f_{\sigma_{k}}^{-1}(r_{0})
\]
for some $\sigma=\sigma_{1}\sigma_{2}\cdots\in\{1,2,...,M\}^{\infty}.$ Since
$\psi$ is surjective, there is a string \linebreak$\omega_{l_{1}}\omega
_{l_{2}}\cdots\in\{\omega_{1},\omega_{2},\dots,\omega_{L}\}^{\infty}$ such
that
\begin{align*}
r  &  =\lim_{k\rightarrow\infty}f_{\sigma_{1}}^{-1}\circ f_{\sigma_{2}}%
^{-1}\circ\cdots\circ f_{\sigma_{k}}^{-1}(r_{0})\\
&  =\lim_{k\rightarrow\infty}\left(  f_{\sigma_{k}}^{-1}\circ f_{\sigma_{k-1}%
}\circ\cdots\circ f_{\sigma_{1}}\right)  ^{-1}(r_{0})\\
&  =\lim_{m\rightarrow\infty}\left(  f_{\omega_{l_{m}}}\circ f_{\omega
_{l_{m-1}}}\circ\cdots\circ f_{\omega_{l_{1}}}\right)  ^{-1}(r_{0})\\
&  =\lim_{k\rightarrow\infty}f_{\omega_{l_{1}}}^{-1}\circ f_{\omega_{l_{2}}%
}^{-1}\circ\cdots\circ f_{\omega_{l_{k}}}^{-1}(r_{0})\in\lim_{k\rightarrow
\infty}(\widehat{\mathcal{G}^{-1}})^{k}(r_{0})=\mathcal{R}_{\mathcal{G}}.
\end{align*}
A similar, but easier, argument shows that $\mathcal{R}_{\mathcal{G}}%
\subseteq\mathcal{R}_{\mathcal{F}}.$ Hence $\mathcal{F}$ and $\mathcal{G}$
have the same hyperplane repeller. Since the attractors and hyperplane
repellers of both are the same we have $index(\mathcal{F})=index(\mathcal{G})$
by the definition of the index.
\end{proof}

\begin{theorem}
\label{indexthm} If $\mathcal{F}=(\mathbb{P}^{1};f_{1},f_{2})$ is the
projective IFS in Proposition \ref{indexflemma} with $M=2,$ $\lambda=10,$ and
$A$ is the attractor of $\mathcal{F}$, then $index(A)=2$.
\end{theorem}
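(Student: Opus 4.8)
The plan is to establish the two inequalities $index(A)\le 2$ and $index(A)\ge 2$ separately. For the upper bound I would use $\mathcal F$ itself as a witness: taking $M=2$ and $\lambda=10$ in Proposition~\ref{indexflemma}, the displayed factorization of $L_m$ shows that $f_1$ has attractive fixed point $(1,1)$ and repulsive fixed point $(\tfrac12,1)$, and $f_2$ has attractive fixed point $(2,1)$ and repulsive fixed point $(\tfrac32,1)$; a direct estimate confirms that $\lambda=10$ is large enough that there are small disjoint intervals $I_1\ni(1,1)$, $I_2\ni(2,1)$ with $f_m(I_1\cup I_2)\subset I_m$, so $\mathcal F$ has a Cantor attractor $A=f_1(A)\sqcup f_2(A)$ which avoids a hyperplane (each $I_m$ does) and $index(\mathcal F)=2$. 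Since $\mathcal F\in\mathbb F_A$, Definition~\ref{indexdef2} gives $index(A)\le index(\mathcal F)=2$, while $index(A)\ge1$ because $\mathbb F_A\neq\emptyset$ and the index of any contractive projective IFS is a positive integer.

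For the lower bound I would argue by contradiction: suppose some $\mathcal G=(\mathbb P^1;g_1,\dots,g_L)\in\mathbb F_A$ has $index(\mathcal G)=1$. Let $R$ be the union of the hyperplanes in the hyperplane repeller of $\mathcal G$ (for $n=1$ the hyperplanes are points, so $R$ is itself the hyperplane repeller, i.e.\ the attractor of $\mathcal G^{-1}$). By Theorem~\ref{adjointhm}(1), $\mathbb P^1\setminus R$ is the basin of attraction of $A$, so $index(\mathcal G)=1$ means $A$ lies in a single connected component $I$ — an open arc — of $\mathbb P^1\setminus R$. Then the closed arc $\mathbb P^1\setminus I$ is disjoint from $A$, hence lies in a single connected component (``gap'') $\gamma_0$ of $\mathbb P^1\setminus A$, and since $R\subset\mathbb P^1\setminus I$ we get $R\subset\gamma_0$; conversely $R\subset\gamma_0$ forces $index(\mathcal G)=1$. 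So it suffices to show that the hyperplane repeller of \emph{any} $\mathcal G\in\mathbb F_A$ is never contained in a single gap of $A$; the plan is to reduce $\mathcal G$ to an IFS whose maps are finite compositions of $f_1,f_2$ and then invoke Lemma~\ref{final-lemma}, which gives $index(\mathcal G)=index(\mathcal F)=2$, the desired contradiction.

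The reduction runs as follows. Since $\mathcal G(A)=A$, each $g=g_j$ satisfies $g(A)\subset A$, so $g(A)$ is a projective copy of the Cantor set $A$ sitting inside $A$. I would prove, using the explicit matrices for $f_1,f_2$ with $\lambda=10$, the two rigidity statements: (i) the only projective copies of $A$ contained in $A$ are the cylinders $f_{i_1}\!\circ\cdots\circ f_{i_k}(A)$; and (ii) the identity is the only projective transformation carrying $A$ onto $A$. Granting these, write $g(A)=f_w(A)$ with $w=i_1\cdots i_k$ (and $k\ge1$, since a contraction cannot fix the Cantor set $A$ setwise); then $f_w^{-1}\circ g$ is a projective transformation fixing $A$ setwise, hence is the identity by (ii), so $g=f_w$. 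Thus $\mathcal G$ consists of finite compositions of $f_1,f_2$ and Lemma~\ref{final-lemma} applies. To prove (i)–(ii) I would use a descent: repeatedly precomposing $g$ with suitable $f_i^{-1}$ — which blows things up by a factor $\approx\lambda$ — one reaches a projective $h$ with $h(A)\subset A$ whose image meets both $A_1=f_1(A)$ and $A_2=f_2(A)$; matching the two–clump structure of $h(A)$ against that of $A$ then reduces everything to the assertion that no projective map conjugates the dynamics of $f_1$ to that of $f_2$, a finite computation with the given matrices.

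The step I expect to be the main obstacle is exactly this rigidity, statements (i) and (ii): they are what is needed to control every $\mathcal G\in\mathbb F_A$ and not merely those built by composing $f_1,f_2$ (for which Lemma~\ref{final-lemma} already suffices), and, unlike the rest of the argument, they are not formal — they genuinely exploit the specific numerical IFS, a generic projective Cantor set having no such structure. A secondary technical nuisance, to be tracked carefully throughout the descent, is the orientation of the maps on the circle $\mathbb P^1$ (for instance by passing to squares of orientation–reversing maps), together with making the circle–topology in the reduction step fully rigorous.
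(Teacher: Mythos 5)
Your proposal follows essentially the same route as the paper: establish $index(\mathcal{F})=2$ via Proposition \ref{indexflemma}, prove the rigidity statement that every map of every projective IFS with attractor $A$ must be a word in $f_{1},f_{2}$ (by the same descent through cylinder sets combined with the triviality of the projective symmetry group of $A$), and conclude with Lemma \ref{final-lemma}. The only cosmetic difference is that the paper first conjugates $\mathcal{F}$ to a normalized pair of matrices before carrying out the descent and the interval case analysis.
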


\begin{proof}
Let $\widehat{\mathcal{F}}=(\mathbb{P}^{1};\hat{f}_{1},\hat{f}_{2})$, where
\[
\hat{f}_{1}=%
\begin{pmatrix}
\frac{1}{10} & 0\\
0 & 1
\end{pmatrix}
,\qquad\qquad\hat{f}_{2}=%
\begin{pmatrix}
37 & -18\\
54 & -26
\end{pmatrix}
.
\]
It is easy to check that $\hat{f}_{1}=f\circ f_{1}\circ f^{-1}$ and $\hat
{f}_{2}=f\circ f_{2}\circ f^{-1}$ where $f_{1}$ and $f_{2}$ are the functions
in Proposition \ref{indexflemma} when $\lambda=10$, and $f$ is the projective
transformation represented by the matrix $L_{f}=%
\begin{pmatrix}
1 & -1\\
1 & -\frac{1}{2}%
\end{pmatrix}
$. It is sufficient to show that if $\widehat{A}$ is the attractor of
$\widehat{\mathcal{F}}$, then $index(\widehat{A})=2$. From here on the IFS
$\mathcal{F}$ is not used, so we drop the \textquotedblright hat" from
$\widehat{\mathcal{F}},\hat{f}_{1},\hat{f}_{2},\widehat{A}$. Also to simplify
notation, the set of points of the projective line are taken to be
$\mathbb{P=R}\cup\left\{  \mathbb{\infty}\right\}  $, where $%
\begin{pmatrix}
x\\
1
\end{pmatrix}
$ is denoted as the fraction $x$ and $%
\begin{pmatrix}
1\\
0
\end{pmatrix}
$ is denoted as $\infty$. In this notation $f_{1}(x)=\frac{1}{10}x$ and
$f_{2}(x)=\frac{37x-18}{54x-26}$ when restricted to $\mathbb{R}$. The
following are properties of $\mathcal{F}$.

\begin{enumerate}
\item The attractor $C$ of $\mathcal{F}$ is a Cantor set.

\item $index(\mathcal{F})=2$.

\item The origin $a=0$ is the attractive fixed point of $f_{1}$ while its
repulsive hyperplane is $\infty$.

\item The attractive fixed point of $f_{2}$ is at $c=2/3$ and its repulsive
hyperplane at $1/2$.

\item $C\subset\lbrack a,b]\cup\lbrack c,d]$, where $b=\frac{11}{40}-\frac
{1}{120}\sqrt{609}$ ($=0.069351$) and $d=\frac{11}{4}-\frac{1}{12}\sqrt{609}$
($=0.69351$) are the attractive fixed points of $f_{1}\circ f_{2}$ and
$f_{2}\circ f_{1}$ respectively.

\item If $h$ is any projective transformation taking $C$ into itself, then
$h([a,b]\cup\lbrack c,d])\subset\lbrack a,b]\cup\lbrack c,d]$.

\item The symmetry group of $C$ is trivial, i.e., the only projective
transformation $h$ such that $h(C)=C$ is the identity.
\end{enumerate}

Property (1) is in the proof of Proposition \ref{indexflemma}, and property
(2) is a consequence of Proposition \ref{indexflemma}. Properties (3) and (4)
are easily verified by direct calculation. Property (5) can be verified by
checking that $\mathcal{F}([a,b]\cup\lbrack c,d])\subset\lbrack a,b]\cup
\lbrack c,d]$.

To prove property (6), let $I$ denote a closed interval (on the projective
line, topologically a circle,) that contains $C.$ Its image $h^{-1}(I)$ is
also a closed interval$.$ Since $h(C)\subset C$, it follows that $C\subset
h^{-1}(C)$. Since $C$ contains $\{a,b,c,d\}$ and some points between $a$ and
$b,$ $h^{-1}(I)$ must contain $a,b$ and some points between $a$ and $b.$ It
follows that $h^{-1}(I)\supset\lbrack a,b]$. Similarly $h^{-1}(I)\supset
\lbrack c,d]$. Therefore $h^{-1}(I)\supset\lbrack a,b]\cup\lbrack c,d],$ and
hence $h([a,b]\cup\lbrack c,d])\subset I$. Now choose $I$ to be $[a,d]$ to get
(A) $h([a,b]\cup\lbrack c,d])\subset\lbrack a,d]$. Choose $I$ to be $[c,b]$
(by which we mean the line segment that goes from $c$ through $d$ then
$\infty=-\infty$ then through $a$ to end at $b$,) to obtain (B) $h([a,b]\cup
\lbrack c,d])\subset\lbrack c,b]$. It follows from (A) and (B) that
$h([a,b]\cup\lbrack c,d])\subset\lbrack a,d]\cap\lbrack c,b]=[a,b]\cup\lbrack
c,d].$

To prove property (7), assume that $h(C)=C$. We will show that $h$ must be the
identity. By property (6) $h([a,b]\cup\lbrack c,d])=[a,b]\cup\lbrack c,d]$.
Taking the complement, we have $h\left(  (b,c)\cup\left(  d,a\right)  \right)
=(b,c)\cup\left(  d,a\right)  $, and so $h([b,c]\cup\lbrack d,a])=[b,c]\cup
\lbrack d,a]$. Hence
\begin{align*}
&  h([a,b]\cup\lbrack c,d])\cap h([b,c]\cup\lbrack d,a])\\
&  =\left(  [a,b]\cup\lbrack c,d]\right)  \cap\left(  \lbrack b,c]\cup\lbrack
d,a]\right)  .
\end{align*}
It follows that $h(\{a,b,c,d\})=\{a,b,c,d\}$. Any projective transformation
that maps $\{a,b,c,d\}$ to itself must preserve the cross ratio of the four
points, so the only possibilities are (i) $h(a)=a,$ $h(b)=b,$ $h(c)=c,$
$h(d)=d,$ in which case $h$ is the identity map; (ii)
$h(a)=b,h(b)=a,h(c)=d,h(d)=c$; (iii) $h(a)=c,h(b)=d,h(c)=a,h(d)=b$, and (iv)
$h(a)=d,h(b)=c,h(c)=b,h(d)=a$. In each case one can write down the specific
projective transformation, for example, (iii) is achieved by
\[
h(x)=\frac{(d-c)(b-c)(x-a)}{(b-a+d-c)(x-c)-(d-c)(b-c)}+c\text{.}%
\]
The other two specific transformations can be deduced by permuting the symbols
$a,b,c,d.$ In each of the cases (ii), (iii) and (iv) it is straightforward to
check numerically that $h(x)$ does not map $C$ into $C$. (One compares the
union of closed intervals
\[
\lbrack f_{1}(a),f_{1}(b)]\cup\lbrack f_{1}(c),f_{1}(d)]\cup\lbrack
f_{2}(a),f_{2}(b)]\cup\lbrack f_{2}(c),f_{2}(d)],
\]
whose endpoints belong to $C$ and which contains $C$, with the union
\[
\lbrack h(f_{1}(a)),h(f_{1}(b))]\cup\lbrack h(f_{1}(c)),h(f_{1}(d))]\cup
\lbrack h(f_{2}(a)),h(f_{2}(b))]\cup\lbrack h(f_{2}(c)),h(f_{2}(d))].\text{)}%
\]
It follows that $h$ must be the identity map, as claimed. \vskip2mm

Let $\mathcal{G}=(\mathbb{P}^{1};g_{1},g_{2},....,g_{L})$ be any projective
IFS with attractor equal to $C$. The proof proceeds by showing the following:
(\ddag) for any $g\in\mathcal{G}$ we have $g=f_{\sigma_{1}}\circ f_{\sigma
_{2}}\circ...\circ f_{\sigma_{k}}$, for some $k$, where each $\sigma_{i}$ is
either $1$ or $2$. Then, by Lemma \ref{final-lemma}, $\mathcal{G}$ has the
same hyperplane repeller as $\mathcal{F}$, and hence $index(\mathcal{G}%
)=index(\mathcal{F})=2.$ So any IFS\ with attractor $C$ has index $2$. This
completes the proof of Theorem~\ref{indexthm} because it shows that any IFS
with attractor $C$ has index $2$, i.e. $index(C)=2$.

To prove claim (\ddag), consider the IFS $\mathcal{H}=([a,b]\cup\lbrack
c,d];f_{1},f_{2},g)$ where $g$ is any function in IFS $\mathcal{G}$. By
property (6) $g([a,b]\cup\lbrack c,d])\subset\lbrack a,b]\cup\lbrack c,d]$. So
$\mathcal{H}$ is indeed a well-defined IFS. It follows immediately from the
fact that both $\mathcal{F}$ and $\mathcal{G}$ have attractor equal to $C$
that $\mathcal{H}$ also has attractor $C$. It cannot be the case that
$g([a,b])\subset\lbrack a,b]$ and $g([c,d])\subset\lbrack c,d])$ since then
$g$ would have two attractive fixed points which is impossible. Similarly, it
cannot occur that $g([c,d])\subset\lbrack a,b]$ and $g([a,b])\subset\lbrack
c,d]$ for then $g^{2}$ would have two attractive fixed points, which is also
impossible. It cannot occur that $g(a)\subset\lbrack a,b]$ and $g(b)\subset
\lbrack c,d]$ for then $g([a,b]\cup\lbrack c,d])$ would not be contained in
$[a,b]\cup\lbrack c,d],$ contrary to property (6). Similarly, we rule out the
possibilities that $g(a)\subset\lbrack c,d]$ and $g(b)\subset\lbrack a,b];$
that $g(c)\subset\lbrack a,b]$ and $g(d)\subset\lbrack c,d];$ and that
$g(d)\subset\lbrack a,b]$ and $g(c)\subset\lbrack c,d]$. It follows that
either $g([a,b]\cup\lbrack c,d])\subset\lbrack a,b]\subset f_{1}([a,d]),$ or
$g([a,b]\cup\lbrack c,d])\subset\lbrack c,d]\subset f_{2}([c,b])$ where
$[c,b]$ denotes the interval from $c$ to $\infty$ then from $-\infty$ to $b.$
(Here, the containments $[a,b]$ $\subset f_{2}([c,b])$ and $[c,d]\subset
f_{2}([c,b])$ are readily verified by direct calculation.) It now follows that
either $g(C)\subset C\cap f_{1}([a,d])=f_{1}(C)$ or $g(C)\subset C\cap
f_{2}([c,b])=f_{2}(C).$ Hence
\[
g(C)\subset C_{\sigma_{1}}:=f_{\sigma_{1}}(C)
\]
for $\sigma_{1}\in\{1,2\}$. If $g(C)=C_{\sigma_{1}}$ then $h(C)=C$ where $h$
is the projective transformation $f_{\sigma_{1}}^{-1}\circ g$. In this case
property (7) implies that $h$ must be the identity map. Therefore
\[
g=f_{\sigma_{1}}\text{.}%
\]
If, on the other hand, $g(C)\varsubsetneq f_{\sigma_{1}}(C)$ then we consider
the IFS
\[
\mathcal{H}_{\sigma_{1}}=(f_{\sigma_{1}}([a,b]\cup\lbrack c,d]);f_{\sigma_{1}%
}\circ f_{1}\circ f_{\sigma_{1}}^{-1},f_{\sigma_{1}}\circ f_{2}\circ
f_{\sigma_{1}}^{-1},g\circ f_{\sigma_{1}}^{-1})\text{,}%
\]
It is readily checked that the functions that comprise this IFS indeed map
$f_{\sigma_{1}}([a,b]\cup\lbrack c,d])$ into itself. The attractor of
$\mathcal{H}_{\sigma_{1}}$ is $C_{\sigma_{1}}=f_{\sigma_{1}}(C)$ because
\begin{align*}
\mathcal{H}_{\sigma_{1}}(C_{\sigma_{1}})  &  =f_{\sigma_{1}}\circ f_{1}\circ
f_{\sigma_{1}}^{-1}\left(  f_{\sigma_{1}}(C)\right)  \cup f_{\sigma_{1}}\circ
f_{2}\circ f_{\sigma_{1}}^{-1}\left(  f_{\sigma_{1}}(C)\right)  \cup g\circ
f_{\sigma_{1}}^{-1}\left(  f_{\sigma_{1}}(C)\right) \\
&  =f_{\sigma_{1}}(f_{1}(C)\cup f_{2}(C))\cup g(C)=f_{\sigma_{1}}(C)\cup
g(C)\\
&  =C_{\sigma_{1}}\text{ (because }g(C)\subset f_{\sigma_{1}}(C)\text{).}%
\end{align*}
Let
\[
a_{\sigma_{1}}<b_{\sigma_{1}}<c_{\sigma_{1}}<d_{\sigma_{1}}%
\]
denote the endpoints of the two intervals $f_{\sigma_{1}}([a,b])$ and
$f_{\sigma_{1}}([c,d])$, and write our new IFS as
\[
\mathcal{H}_{\sigma_{1}}=([a_{\sigma_{1}},b_{\sigma_{1}}]\cup\lbrack
c_{\sigma_{1}},d_{\sigma_{1}}];f_{\left(  \sigma_{1}\right)  1},f_{\left(
\sigma_{1}\right)  2},g_{\sigma_{1}}),
\]
where
\[
f_{\left(  \sigma_{1}\right)  \sigma_{2}}=f_{\sigma_{1}}\circ f_{\sigma_{2}%
}\circ f_{\sigma_{1}}^{-1},\text{ and }g_{\sigma_{1}}=g\circ f_{\sigma_{1}%
}^{-1}.
\]
Repeat our earlier argument to obtain
\[
g_{\sigma_{1}}([a_{\sigma_{1}},b_{\sigma_{1}}]\cup\lbrack c_{\sigma_{1}%
},d_{\sigma_{1}}])\subset f_{\sigma_{2}}([a_{\sigma_{1}},b_{\sigma_{1}}%
]\cup\lbrack c_{\sigma_{1}},d_{\sigma_{1}}]),
\]
and in particular that
\[
g_{\sigma_{1}}(C_{\sigma_{1}})\subset C_{\sigma_{1}\sigma_{2}}:=f_{\left(
\sigma_{1}\right)  \sigma_{2}}(C_{\sigma_{1}})=f_{\sigma_{1}}\circ
f_{\sigma_{2}}\circ f_{\sigma_{1}}^{-1}\circ f_{\sigma_{1}}(C)=f_{\sigma_{1}%
}\circ f_{\sigma_{2}}(C)
\]
for some $\sigma_{2}\in\{1,2\}.$ If $g_{\sigma_{1}}(C_{\sigma_{1}}%
)=C_{\sigma_{1}\sigma_{2}}$ then $g_{\sigma_{1}}(f_{\sigma_{1}}(C))=f_{\sigma
_{1}}\circ f_{\sigma_{2}}(C)$ which implies $g\circ f_{\sigma_{1}}^{-1}\circ
f_{\sigma_{1}}(C)=f_{\sigma_{1}}\circ f_{\sigma_{2}}(C)$ which implies, as
above, that
\[
g=f_{\sigma_{1}}\circ f_{\sigma_{2}}.
\]
If $g_{\sigma_{1}}(C_{\sigma_{1}})\varsubsetneq C_{\sigma_{1}\sigma_{2}}$ then
we construct a new projective IFS $H_{\sigma_{1}\sigma_{2}}$ in the obvious
way and continue the argument. If the process does not terminate with
\[
g=f_{\sigma_{1}}\circ f_{\sigma_{2}}\circ\cdots\circ f_{\sigma_{k}}%
\]
for some $k$, then $g(C)$ is a singleton, which is impossible because $g$ is
invertible. We conclude that
\[
\mathcal{G}=(\mathbb{P};f_{\omega_{1}},f_{\omega_{2}},\dots,f_{\omega_{L}})
\]
where
\[
f_{\omega_{l}}=f_{\sigma_{1}^{l}}\circ f_{\sigma_{2}^{l}}\circ\cdots\circ
f_{\sigma_{k_{l}}^{l}}%
\]
in the obvious notation. This concludes the proof of claim (\ddag).

Now Lemma \ref{final-lemma} implies $index(\mathcal{G})=index(\mathcal{F})$.
So the index of any projective IFS that has $C$ as its attractor is $2.$ It
follows that $index(A)=2.$
\end{proof}

\section{\label{remarksec}\textbf{Remarks}}

Various remarks are placed in this section so as to avoid interrupting the
flow of the main development.

\begin{remark}
Example 3 in Section \ref{examplesec} illustrates that there exist
non-contractive projective IFSs that, nevertheless, have attractors. Such IFSs
are not well understood and invite further research.
\end{remark}

\begin{remark}
\label{remark1}It is well known \cite{williams} that if each function of an
IFS is a contraction on a complete metric space $X$, then $\mathcal{F}$ has a
unique attractor in $X$. So statement (4) of the Theorem~\ref{main}
immediately implies the existence of an attractor $A$, but not that there is a
hyperplane $H$ such that $A\cap H=\emptyset$.
\end{remark}

\begin{remark}
\label{remark3} Let $\mathcal{F}$ be a contractive IFS. By Corollary~\ref{cor}%
, each $f\in\mathcal{F}$ has an invariant hyperplane $H_{f}$. If all these
invariant hyperplanes are identical, say $H_{f}=H$ for all $f\in\mathcal{F}$,
then the projective IFS $\mathcal{F}$ is equivalent to an affine IFS acting on
the embedded affine space $\mathbb{P}^{n}\smallsetminus H$. More specifically,
let $G=(\mathbb{R}^{n};g_{1},g_{2},...,g_{M})$ be an affine IFS where
$g_{i}(x)=L_{i}^{\prime}(x)+t_{i}$ and where $L_{i}^{\prime}$ is the linear
part and $t_{i}$ the translational part. A corresponding projective IFS is
$\mathcal{F}=(\mathbb{P}^{n};f_{1},f_{2},...,f_{M})$ where, for each $i$ the
projective transformation $f_{i}$ is represented by the matrix $L_{f_{i}}$:
\[
L_{f_{i}}
\begin{pmatrix}
x_{0}\\
x_{1}\\
.\\
x_{n}%
\end{pmatrix}
=%
\begin{pmatrix}
1 & 0\\
t_{i} & L_{i}^{^{\prime}}%
\end{pmatrix}%
\begin{pmatrix}
x_{0}\\
x_{1}\\
.\\
x_{n}%
\end{pmatrix}
,
\]
Here $\mathbb{R}^{n}$ corresponds to $\mathbb{P}\backslash H$ with $H$ the
hyperplane $x_{0}=0$. In this case the hyperplane repeller of $\mathcal{F}$ is
$H.$
\end{remark}

\begin{remark}
\label{remark4} Straightforward geometrical comparisons between $d_{K}(x,y)$
and $d_{\mathbb{P}}(x,y)$ show that (i) the two metrics are bi-Lipshitz
equivalent on any convex body contained in $int\left(  K\right)  $ and (ii) if
$f$ is any projective transformation on $\mathbb{P}^{n}$ then the metric
$d_{f(\mathbb{P)}}$ defined by $d_{f(\mathbb{P)}}(x,y)=d_{\mathbb{P}%
}(f(x),f(y))$ for all $x,y\in\mathbb{P}^{n}$ is bi-Lipschitz equivalent to
$d_{\mathbb{P}}$. A consequence of assertions (i) and (ii) is that the value
of the Hausdorff dimension of any compact subset of $int\left(  K\right)  $ is
the same if it is computed using the round metric $d_{\mathbb{P}}$ or the
Hilbert metric $d_{K}$; see \cite[Corollary 2.4, p.30]{falconer}, and its
value is invariant under the group of projective transformations on
$\mathbb{P}^{n}.$ In particular, the Hausdorff dimension of an attractor of a
projective IFS is a projective invariant.
\end{remark}

\begin{remark}
Theorem 1 provides conditions for the existence of a metric with respect to
which a projective IFS is contractive. In so doing, it invites other
directions of development, including IFS with place-dependent probabilities
\cite{comptes}, graph-directed IFS theory \cite{mauldin}, projective fractal
interpolation, and so on. In subsequent papers we hope to describe a natural
generalization of the joint spectral radius and applications to digital imaging.
\end{remark}

\begin{remark}
\label{attractremark} Definition \ref{attractdef} of the attractor of an IFS
is a natural generalization of the definition \cite[p.82]{FE1} of the
attractor of a contractive IFS. Another general definition, in the context of
iterated closed relations on a compact Hausdorff space, has been given by
McGehee \cite{mcgeehee}. He proves that his definition is equivalent to
Definition \ref{attractdef}, for the case of contractive iterated function
systems. However, readily constructed examples show that McGehee's definition
of attractor is weaker than Definition \ref{attractdef}.
\end{remark}

\textbf{ACKNOWLEDGMENTS. }We thank David C. Wilson for many helpful
discussions; he influenced the style and contents of this paper and drew our
attention to the Krein-Rutman theorem.


\begin{thebibliography}{99}                                                                                               %


\bibitem {ABVW}Ross Atkins, M. F. Barnsley, David C. Wilson, Andrew Vince,
\textit{A characterization of point-fibred affine iterated function systems},
Topology Proceedings 38 (2010) 189-211.

\bibitem {barnsleydemko}M. F. Barnsley and S. G. Demko, \textit{Iterated
function systems and the global construction of fractals}, Proc. Roy. Soc.
London Ser. A 399 (1985) 243--275.

\bibitem {FE1}M. F. Barnsley, \textit{Fractals Everywhere}, Academic Press,
Boston, MA, 1988.

\bibitem {barnsleyhurd}M. F. Barnsley and L. P. Hurd, \textit{Fractal Image
Compression. }Illustrations by Louisa F. Anson. A. K. Peters, Ltd., Wellesley,
MA, 1993.

\bibitem {barnsleynotices}M. F. Barnsley, \textit{Fractal image compression},
Notices Am. Math. Soc. 43 (1996) 657-662.

\bibitem {notices2}M.\ F. Barnsley, \textit{The life and survival of
mathematical ideas}, Notices Am. Math. Soc. 57 (2010) 12-24.

\bibitem {superfractals}M.F. Barnsley, \textit{Superfractals}, Cambridge
University Press, Cambridge, 2006.



\bibitem {barnsleyhutch}M. F. Barnsley, J. Hutchinson, \"{O}. Stenflo,
\textit{V-variable fractals: fractals with partial self similarity}, Advances
in Mathematics, 218 (2008) 2051-2088.

\bibitem {IMA}M. F. Barnsley, \textit{Introduction to IMA fractal proceedings,
}Fractals in Multimedia (Minneapolis, MN, 2001), 1-12, IMA Vol. Math. Appl.,
\textbf{132, }Springer, New York, 2002.

\bibitem {comptes}M.\ F. Barnsley, S. G. Demko, J. H. Elton, J. S. Elton,
\textit{Invariant measures for Markov processes arising from iterated function
systems with place-dependent probabilities, }Ann. Inst. H. Poincar\'{e}
Probab. Statist. \textbf{24} (1988), 367-394.

\bibitem {birkhoff}G. Birkhoff, \textit{Extensions of Jentzsch's theorem,
}Trans. Amer. Math. Soc. \textbf{85 }(1957), 219-227.

\bibitem {bushell}P. J. Bushell, \textit{Hilbert's metric and positive
contraction mappings in Banach space, }Arch. Rat. Mech. Anal. \textbf{52
}(1973), 330-338.G

\bibitem {buseman}Herbert Buseman, \textit{The Geometry of Geodesics},
Academic Press, New York, 1955.



\bibitem {berger}M. A. Berger and Y. Wang, \textit{Bounded semigroups of
matrices}, Linear Algebra and Appl. 166 (1992), 21-27.

\bibitem {Berger2}M. A. Berger, \textit{Random affine iterated function
systems: curve generation and wavelets, }SIAM Rev. \textbf{34 }(1992), 361-385.

\bibitem {blanc}J. Blanc-Talon, \textit{Self-controlled fractal splines for
terrain reconstruction, }IMACS World Congress on Scientific Computation,
Modelling, and Applied Mathematics \textbf{114 }(1997), 185-204.

\bibitem {brualdi}R. A. Brualdi, \textit{Introductory Combinatorics, 4th
edition, }Elsevier, New York, 1997.

\bibitem {dekker}D. Dekker, \textit{Convex regions in projective N-space, }Am.
Math. Monthly \textbf{62 }(1955), 430-431.

\bibitem {KreinRutman}M. G. Krein and M. A. Rutman, \textit{Linear operators
leaving invariant a cone in a Banach space}, Translations American
Mathematical Society, Number 26, New York, New York, 1950. (Translated from
Uspehi Matem. Nauk (N.S.)\textbf{3}, no 1(23), (1948) 3-95.)

\bibitem {falconer}Kenneth Falconer, \textit{Fractal Geometry: Mathematical
Foundations and Applications, }John Wiley and Sons, Ltd., Chichester, 1990.

\bibitem {fisher}Yuval Fisher, \textit{Fractal Image Compression: Theory and
Application}, Springer Verlag, New York, 1995.

\bibitem {groot}J. de Groot and H. de Vries, \textit{Convex sets in projective
space, }Compositio Mathematica \textbf{13} (1957) 113-118.

\bibitem {hutchinson}J. E. Hutchinson, \textit{Fractals and self-similarity},
Indiana Univ. Math. J. 30 (1981) 713--747.

\bibitem {jordan}G. E. Shilov, \textit{Linear Algebra, }Dover Publications,
Inc., Minneola, New York, 1997.

\bibitem {mauldin}R. Daniel Mauldin and S. C. Williams, \textit{Random
recursive constructions: asymptotic geometrical and topological properties,}
\textit{Trans. Amer. Math. Soc. }\textbf{295} (1986), 325-346.

\bibitem {mcgeehee}Richard McGehee, \textit{Attractors for closed relations on
compact Hausdorff spaces, }Indiana Univ. Math. J. \textbf{41} (1992), 1165-1209.

\bibitem {williams}R. F. Williams, Composition of contractions, Bol. da Soc.
Brazil de Mat. 2 (1971) 55-59.
\end{thebibliography}
\end{document}